\newcommand\blfootnote[1]{%
  \begingroup
  \renewcommand\thefootnote{}\footnote{#1}%
  \addtocounter{footnote}{-1}%
  \endgroup
}
\numberwithin{equation}{section}
\theoremstyle{plain}
\newtheorem{theorem}{Theorem}[section]
\newtheorem{proposition}[theorem]{Proposition}
\newtheorem{lemma}[theorem]{Lemma}
\newtheorem{corollary}[theorem]{Corollary}
\newtheorem{remark}[theorem]{Remark}
\newtheorem{definition}[theorem]{Definition}
\newtheorem{example}[theorem]{Example}
\newtheorem{assumption}[theorem]{Assumption}
\newcommand{\dd}{\,\mathrm{d}}
\newcommand{\E}{\mathbb{E}}
\newcommand{\R}{\mathbb{R}}
\newcommand\bplim{\operatorname{bp-lim}}
\newcommand{\thor}{t_0}
\renewcommand{\d}{\mathrm{d}}
\renewcommand{\P}{\mathbb{P}}
\title[Fokker--Planck equations for time-dependent operators]%
{Existence and uniqueness results for time-inhomogeneous time-change equations and Fokker--Planck equations
}
\author[D\"oring]{Leif D\"oring}
\address{Leif D\"oring, University of Mannheim, Germany}
\email{doering@uni-mannheim.de}
\author[Gonon]{Lukas Gonon}
\address{Lukas Gonon, University of St. Gallen, Switzerland}
\email{lukas.gonon@unisg.ch}
\author[Pr\"omel]{David J. Pr\"omel}
\address{David J. Pr\"omel, University of Oxford, United Kingdom}
\email{proemel@maths.ox.ac.uk}
\author[Reichmann]{Oleg Reichmann}
\address{Oleg Reichmann, European Investment Bank, Luxembourg}
\email{o.reichmann@eib.org}
\date{\today}
\begin{document}

\begin{abstract}
  We prove existence and uniqueness of solutions to Fokker--Planck equations associated to Markov operators multiplicatively perturbed by degenerate time-inhomogeneous coefficients. Precise conditions on the time-inhomogeneous coefficients are given. In particular, we do not necessarily require the coefficients to be neither globally bounded nor bounded away from zero. The approach is based on constructing random time-changes and studying related martingale problems for Markov processes with values in locally compact, complete and separable metric spaces.  
\end{abstract}

\maketitle
\frenchspacing\blfootnote{The views expressed in this article are those of the authors and not necessarily of the European Investment Bank.}

\noindent\textbf{Key words and phrases:} Fokker--Planck equation, forward Kolmogorov equation, Markov process, martingale problem, random time-change, time-dependent operator.\\
\textbf{MSC 2010 Classification:} 35Q84, 60J75.


\section{Introduction}

The study of existence and uniqueness of solutions to Fokker--Planck equations, also known as forward Kolmogorov equations, is a classical topic of great current interest. One reason are the numerous applications which arose over the past decades such as in the theory of stochastic processes and of (partial) differential equations. 

In this article, we consider the Fokker--Planck equation associated to a linear operator~$A$ which is assumed to be the (infinitesimal) generator of a Markov process with values in a metric space~$E$ (for instance $E = \R^d$ and~$A$ is an integro-differential operator), and to a degenerate \textit{time-inhomogeneous} coefficient~$\sigma$. More precisely, we establish sufficient conditions such that there exists a unique family of probability measures~$(p(t,\cdot))_{t\in [0,\thor]}$ on~$E$ solving the Fokker--Planck equation 
\begin{equation}\label{eq:intro}
  \int_E f(x)\, p(t,\d x) - \int_E f(x) \,\mu_0(\d x) = \int_0^t \int_E \sigma(s,x) \mathcal{A}f(x)\, p(s,\d x) \dd s,\quad t \in [0,\thor],
\end{equation}
for all ``sufficiently nice'' test functions~$f$ and given the initial condition $\mu_0$ and $\thor>0$. We refer to Theorem~\ref{thm:KolmogorovUniqueness} for the exact formulation of our existence and uniqueness result. Let us point out that the coefficient~$\sigma$ is not only time-dependent but also~$\sigma$ is neither (necessarily) globally bounded nor bounded away from zero.

For a \textit{time-homogeneous} coefficient~$\sigma$, the existence and uniqueness result provided in Theorem~\ref{thm:KolmogorovUniqueness} is well-known, see, e.g., the book~\cite{Ethier1986a} or~\cite{Kurtz1998}. Furthermore, classical results on multiplicative perturbations of Feller generators and time-changed L\'evy processes allow for weak regularity assumptions on~$\sigma$, see e.g. \cite[Thm.~4.1]{Boettcher2013} and the original reference~\cite{Lumer1973}, \cite{Engelbert1985} and the references therein. However, note that these results do not deal with the question whether the Fokker--Planck equation uniquely determines the law of the time-changed process.

For a \textit{time-inhomogeneous} coefficient~$\sigma$, existence and uniqueness has been studied in various situations, for instance, for $\sigma$ bounded away from~$0$, we refer to \cite{Stroock1975}, \cite{Bass1988}, \cite{Bentata2009} and further references therein and, for globally bounded $\sigma$ and $\mathcal{A}$ being the generator of a diffusion, see~\cite{Figalli2008}. However, to the best of our knowledge, the previous literature does not cover our assumptions on the operator~$\mathcal{A}$ and~$\sigma$.
 
An application, where such general conditions on the coefficient~$\sigma$ are essential, can be found in the accompanying paper~\cite{Doering2017} studying the solvability of the Skorokhod embedding problem (SEP) for L\'evy processes~$L$. Let us recall that the version of the SEP dealt with in~\cite{Doering2017} asks to find an integrable stopping time~$\tau$ such that~$L_\tau$ is distributed according to a given law~$\mu_1$. The key idea of the approach developed in~\cite{Doering2017} is to find a construction of a coefficient~$\sigma$ such that there exists a unique solution~$(p(t,\cdot))_{t\in [0,\thor]}$ to the Fokker--Planck equation~\eqref{eq:intro} with $p(\thor,\cdot)=\mu_1$, where $\mathcal{A}$ is assumed to be the generator of the given L\'evy process. Needless to say, for this construction to work, weak regularity conditions on~$\sigma$ are crucially required. Moreover, the general setting considered in the present article paves the way to apply the approach of~\cite{Doering2017} to solve the Skorokhod embedding problem for other stochastic processes as well, such as Markov chains or multi-dimensional Brownian motion.

Apart from the indisputably justified interest of investigating Fokker--Planck equations on its own, further motivation to prove existence and uniqueness results for Fokker--Planck equations stems from its long list of recent applications. Just to name a few, let us mention the explicit construction of peacocks (see~\cite[Chapter~6]{Hirsch2011}), the construction of (martingale) diffusions matching prescribed marginal distributions at given (random) times (see e.g. \cite{Cox2011} and \cite{Ekstrom2013}) or Dupire's formula in mathematical finance (see~\cite{Dupire1994} and \cite{Carr2004}). Further potential applications include probabilistic representations of the solution to irregular porous media type equations~\cite{Barbu2011} and measure-valued martingales \cite{Veraguas2017}. The results obtained in the present article will be a useful tool when extending any of these applications to more general Markov processes such as, e.g., L\'evy type processes and the associated generators. 

\medskip
\noindent\textbf{Acknowledgement:} L.G. and D.J.P acknowledge generous support from ETH Z\"urich, where a major part of this work was completed.

\medskip
\noindent\textbf{Organization of the paper:} In Section~\ref{sec:notation} the notation, definitions and assumptions are introduced. Section~\ref{sec:time-change} is devoted to the construction of a solution to the martingale problem and to the Fokker--Planck equation. The existence and uniqueness result (Theorem~\ref{thm:KolmogorovUniqueness}) for Fokker--Planck equations with degenerate time-inhomogeneous coefficients is proven in Section~\ref{sec:uniqueness}. Auxiliary results for the presented construction of random time-changes are provided in Section~\ref{sec:appendix}.

\section{Notation, Definitions and Assumptions}\label{sec:notation} 

Our definitions and notation are precisely as in \cite{Ethier1986a} with two exceptions pointed out in Remark~\ref{rmk:expections}.
  
Throughout the whole article, the underlying stochastic basis consists of a probability space $(\Omega, \mathcal{F}, \mathbb{P})$ and a filtration $(\mathcal{F}_t)_{t\geq 0}$ satisfying the usual conditions of completeness and right-continuity. Moreover, we fix a locally compact, complete, separable metric space $(E,d)$ with metric $d$ and denote by $\mathcal{B}(E)$ its Borel $\sigma$-algebra. For $x \in E$ and $\varepsilon > 0$, set $B_\varepsilon(x):= \{y \in E \, :\, d(x,y) < \varepsilon\}$. $\mathcal{P}(E)$ denotes the set of probability measures on $(E,\mathcal{B}(E))$.

The space of continuous functions $f\colon \R \to \R$ satisfying $\lim_{|x|\to \infty } f(x) = 0$ is denoted by $C_0(\mathbb{R})$. For $n\in \mathbb{N}$ let $C^n_0(\R)$ be the subset of functions $f\in C_0(\R)$ such that $f$ is $n$-times differentiable and all derivatives of order less or equal to $n$ belong to $C_0(\R)$ and we set $C_0^\infty(\R):=\bigcap_{n\in \mathbb{N}}C^n_0(\R)$. The spaces of functions with compact support $C_c(\R)$, $C_c^n(\R)$ and $C_c^\infty(\R)$ are defined analogously. 

The space $D_E[0,\infty)$ stands for all maps $\omega \colon [0,\infty) \to E$ which are right-continuous and have a left-limit at each point $t\in [0,\infty)$ (short: RCLL paths). The space $D_E[0,\infty)$ is equipped with the Skorokhod topology $(J1)$, see \cite[Chap.~3, Sec. 5]{Ethier1986a} for the precise definition. 
$B(E)$ denotes the space of real-valued, bounded, measurable functions on $E$ and $\|\cdot\|$ is the sup-norm. $C_0(E)$ denotes the space of continuous functions that vanish at infinity and $C_b(E)$ the space of bounded continuous functions on $E$. A sequence $\{f_n\}_{n \in \mathbb{N}} \subset B(E)$ converges \textit{boundedly and pointwise} to $f \in B(E)$ (denoted by $\bplim_{n\to \infty} f_n = f$) if  $\sup_n \|f_n \| < \infty$ and $\lim_{n \to \infty} f_n(x) = f(x)$ for all $x \in E$. $U \subset B(E)$ is called \textit{bp-closed}, if $\{f_n\}_{n \in \mathbb{N}} \subset U$ and $\bplim_{n \to \infty} f_n = f$ implies $f \in U$. For  $V \subset B(E)$, we define $\text{bp-closure}(V)$ as the smallest subset of $B(E)$ which is bp-closed and contains $V$. A sequence $\{(f_n,g_n)\}_{n \in \mathbb{N}} \subset B(E)\times B(E) $ is said to converge to $(f,g) \in B(E) \times B(E)$ boundedly and pointwise (denoted by $\bplim_{n\to \infty} (f_n,g_n) = (f,g)$) if $\bplim_{n\to \infty} f_n = f$ and $\bplim_{n\to \infty} g_n = g$. The definitions of bp-closed and bp-closure are then defined analogously for subsets of $B(E) \times B(E)$.\smallskip

Following the martingale problem approach in \cite{Ethier1986a}, consider a given $D(\mathcal{L}) \subset C_b(E)$ and $\mathcal{L}\colon D(\mathcal{L}) \to C_b(E)$ linear. $(D(\mathcal{L}),\mathcal{L})$ is said to be \textit{conservative} if 
\begin{equation}\label{eq:defConservative} 
  \text{ there exists } \{h_n\}_{n \in \mathbb{N}} \subset D(\mathcal{L})  \text{ such that } (1,0) = \bplim_{n \to \infty} (h_n, \mathcal{L} h_n).
\end{equation} 
For a stochastic process $(X_t)_{t \geq 0}$ we set $\mathcal{F}_t^X := \sigma(X_s \, :\, s \leq t)$. A \textit{solution to the martingale problem for $(\mathcal{L},\mu)$} is a progressively measurable $E$-valued stochastic process $(X_t)_{t \geq 0}$ defined on some probability space $(\tilde{\Omega},\tilde{\mathcal{F}},\tilde{\mathbb{P}})$ such that for each $h \in D(\mathcal{L})$, the process 
\begin{equation}\label{eq:martingaleProblemDef} 
  h(X_t) - h(X_0) - \int_0^t \mathcal{L} h (X_s) \dd s,  \quad t \geq 0,
\end{equation}
is an $(\mathcal{F}^X_t)_{t\geq 0}$-martingale and $\tilde{\mathbb{P}} \circ X_0^{-1} = \mu$. \textit{Uniqueness} is said to hold for the martingale problem for $(\mathcal{L},\mu)$ if any two solutions $X$, $\tilde{X}$ have the same finite-dimensional distributions. The martingale problem for $(\mathcal{L},\mu)$  is said to be \textit{well-posed} if there exists a solution and uniqueness holds. A solution to the $D_E[0,\infty)$-martingale problem (or RCLL-martingale problem) for $(\mathcal{L},\mu)$ is an RCLL process that is a solution to the martingale problem for $(\mathcal{L},\mu)$. \textit{Uniqueness} is said to hold for the RCLL-martingale problem for $(\mathcal{L},\mu)$ if any two solutions to the RCLL-martingale problem for $(\mathcal{L},\mu)$ have the same law on $D_E[0,\infty)$. The RCLL-martingale problem for $(\mathcal{L},\mu)$ is said to be \textit{well-posed} if there exists a solution and uniqueness holds.

Similarly, for a linear operator $(D(\mathcal{A}),\mathcal{A})$ and a measurable function $\sigma\colon [0,\infty) \times E \to [0,\infty)$, a solution to the (time-inhomogeneous) martingale problem for $(\sigma \mathcal{A}, \mu)$ is a progressively measurable $E$-valued process $X$ defined on some probability space $(\tilde{\Omega},\tilde{\mathcal{F}},\tilde{\mathbb{P}})$ such that for each $f \in D(\mathcal{A})$ the process 
\begin{equation}\label{eq:InHomMartingaleProblemDef} 
  f(X_t) - f(X_0) - \int_0^t \sigma(s,X_s)\mathcal{A} f (X_s) \dd s,  \quad t \geq 0,
\end{equation}
is an $(\mathcal{F}^X_t)_{t\geq 0}$-martingale and $\tilde{\mathbb{P}} \circ X_0^{-1} = \mu$. Uniqueness, well-posedness and the corresponding concepts among $D_E[0,\infty)$-processes are defined analogously to the time-homogeneous case. For $\mu \in \mathcal{P}([0,\infty))$ and $\nu \in \mathcal{P}(E)$ we write $\mu \otimes \nu$ for the product measure generated by $\mu$ and $\nu$ on $[0,\infty)\times E$. If~$F$ is a measurable space, $\delta_x$ denotes the Dirac measure at $x \in F$. For $V \subset B(E)$, $\mathrm{span}(V)$ denotes the smallest linear subspace of $B(E)$ containing~$V$, i.e. the set of all finite linear combinations of elements of~$V$. 

\begin{remark}\label{rmk:expections}
  The notation differs from the book~\cite{Ethier1986a} in two following respects:
  
  Firstly, in \cite{Ethier1986a} $(D(\mathcal{L}),\mathcal{L})$ is said to be conservative if
  \begin{equation}\label{eq:EKconservative} 
    (1,0) \in \text{bp-closure}(\{(h,\mathcal{L} h) \, :\, h \in D(\mathcal{L}) \}).
  \end{equation}
  While our requirement~\eqref{eq:defConservative} implies~\eqref{eq:EKconservative}, the converse is not true in general, cf. \cite[Chap.~3, Sec.~4]{Ethier1986a}.

  Secondly, (not necessarily RCLL) solutions to martingale problems are required to be progressively measurable in our context, as in~\cite{Kurtz1998}.
\end{remark}

\begin{remark}\label{rmk:martingaleProblemBpClosure}
  The motivation for the definition of bp-closure is as follows: Suppose $(D(\mathcal{L}),\mathcal{L})$ is a linear operator on $C_b(E)$ and $X$ is a solution to the martingale problem for $(\mathcal{L},\mu)$ for some $\mu \in \mathcal{P}(E)$. Then by the dominated convergence theorem for conditional expectations, the set of functions $(h,g) \in B(E) \times B(E)$, for which 
  \begin{equation}\label{eq:martingaleProblemBpClosure} 
    h(X_t) - h(X_0) - \int_0^t g (X_s) \dd s,  \quad t \geq 0, 
  \end{equation}
  is an $(\mathcal{F}^X_t)_{t\geq 0}$-martingale, is bp-closed and so \eqref{eq:martingaleProblemBpClosure} is a martingale for all $(h,g) \in \text{bp-closure}(\{(h,\mathcal{L} h) \, :\, h \in D(\mathcal{L}) \})$.
\end{remark}

Finally, we provide two definitions analogous to the conditions imposed in \cite[Thm.~35.4~(iii)]{Sato1999} and \cite[Chap.~6, Thm.~1.1]{Ethier1986a}. The first one is a (strong) recurrence property, the second is essential for studying uniqueness of time-change equations~\eqref{eq:ode time-change}. Denote by~$Z$ the coordinate process on $D_E[0,\infty)$.

\begin{definition}\label{def:recurrence} 
  A probability measure $P$ on $(D_E[0,\infty),\mathcal{B}(D_E[0,\infty))$ is called recurrent if
  \begin{equation}\label{eq:recurrent} 
    P\left( \int_0^\infty \mathbbm{1}_{B_a(Z_0)}(Z_t) \dd t = \infty \right) = 1 \quad \text{ for every } a > 0. 
  \end{equation}
\end{definition}

\begin{definition}\label{def:regular} 
  Let $H \colon E \to [0,\infty)$ be measurable and $P \in \mathcal{P}(D_E[0,\infty))$. $H$ is called regular for~$P$ if $P$-a.s.
  \begin{equation*}
    \inf \left\lbrace s \in [0,\infty)\,:\,\int_0^s H(Z_u)^{-1} \dd u =\infty \right\rbrace = \rho \quad \text{ and } \quad H(Z_\rho)=0 \text{ on } \{\rho < \infty\}
  \end{equation*} 
  where  
  \begin{equation*}
    \rho := \inf \left\lbrace s \in [0,\infty)\,:\, H(Z_s) = 0 \right\rbrace.
  \end{equation*}
\end{definition}

\begin{example}\label{ex:regularAt0isregular} 
  Let $E=\R$ and $Z$ be a $P$-Brownian motion. Suppose $H\colon \mathbb{R} \to [0,\infty)$ satisfies
  \begin{equation}\label{eq:HregularAt0} 
    \{ x \in \mathbb{R}\, :\, H(x) = 0 \} = I(H),
  \end{equation} 
  where $I(H)$ is the closed set
  \begin{equation}\label{eq:defintion I}
    I(H) := \left\lbrace x \in \mathbb{R}\,:\, \forall \varepsilon > 0 : \int_{x-\varepsilon}^{x+\varepsilon} \frac{\dd y}{H(y)} = \infty \right\rbrace. 
  \end{equation}
  Then $H$ is regular for $P$. Indeed, since $I(H)$ in \eqref{eq:defintion I} is closed and \eqref{eq:HregularAt0} holds, $H^{-1}(\{0\})$ is closed. Hence, for $P$-a.e. $\omega$ with $\rho(\omega)< \infty$,  $H(Z_{\rho(\omega)}(\omega)) = 0$ by (right-)continuity. Hence, the second part of the definition is established and the first part follows directly from \eqref{eq:HregularAt0} and \cite[Chap.~5, Lem.~5.2]{Karatzas1991a}.
\end{example}

\subsection{Assumptions}

The following assumptions are used at different places throughout the article. Our set of assumptions is split in such a way that we can distinguish as good as possible between assumptions on the stochastic process and on the time-inhomogeneous coefficient~$\sigma$.

\begin{assumption}[Regularity of $\sigma$]\label{ass:sigma}
  Let $\thor > 0$ and $\sigma \colon [0,\infty) \times E \to [0,\infty)$ be given. Suppose $\sigma$ is of the form $\sigma (t,x):= H(x) \tilde{\sigma}(t,x)$ for $(t,x)\in[0,\infty) \times E$ with $\tilde{\sigma}(t,x) \equiv 0$ for $t> \thor$ and such that
  \begin{enumerate}
    \item[(i)] $H \colon E \to [0,\infty)$ is measurable,
    \item[(ii)] $\tilde{\sigma} \colon [0,\thor] \times E \to (0,\infty)$ is measurable and satisfies the following: for each compact set $K \subset E$ and $S \in (0,\thor)$ there exists $C_1,C_2,C_3 >0$ such that 
          \begin{equation*}
            |\tilde{\sigma}(t,x) - \tilde{\sigma}(s,x)|  \leq C_1 |t - s|\quad \text{and}\quad C_2 \leq \tilde{\sigma}(t,x) \leq  C_3,
          \end{equation*}
          for all $s,t \in [0,S]$ and for all $x \in K$, where $C_3$ does not depend on $S$ (but it may depend on $K$).
  \end{enumerate}
\end{assumption}

\begin{assumption}\label{ass:FellerProcess}
  Let $\mathcal{D} \subset C_0(E)$ and $\mathcal{A}\colon\mathcal{D} \to C_0(E)$ be linear so that
  \begin{enumerate}
    \item[(i)] $(\mathcal{D},\mathcal{A})$ is conservative, $\mathcal{D}$ is dense in $C_0(E)$ and an algebra\footnote{That means $\mathcal{D}\subset C_0(E)$ is an algebra with respect to the addition and multiplication induced by $C_0(E)$.} in $C_0(E)$,
    \item[(ii)] for any $\mu_0 \in \mathcal{P}(E)$, the RCLL-martingale problem for $(\mathcal{A},\mu_0)$ is well-posed.
  \end{enumerate} 
\end{assumption}

\begin{remark}\label{rmk:Px} The most important special case of Assumption \ref{ass:FellerProcess}~(ii) is $\mu_0 = \delta_x$. In this case the corresponding law on $D_E[0,\infty)$ of the RCLL-solution to the martingale problem is denoted by $P_x$.\end{remark}  

\begin{example}\label{rmk:FellerEx} 
  Let $\bar{\mathcal{A}}$ be the generator of a Feller semigroup on $C_0(E)$ with domain $D(\mathcal{\bar{A}})$ and $\mathcal{D}$ be a core for $\bar{\mathcal{A}}$ (see \cite{Ethier1986a}). Suppose $\mathcal{D}$ is an algebra in $C_0(E)$ and denote by $\mathcal{A}$ the restriction of $\bar{\mathcal{A}}$ to $\mathcal{D}$. Then $\mathcal{D}$ is dense, $(\mathcal{D},\mathcal{A})$ satisfies the assumptions of \cite[Chap.~4, Thm.~2.2]{Ethier1986a} and thus, by \cite[Chap.~4, Cor.~2.8]{Ethier1986a}, $(\mathcal{D},\mathcal{A})$ is conservative in the sense of \eqref{eq:EKconservative} (from the proof of \cite[Chap.~4, Cor.~2.8]{Ethier1986a} also conservative in our sense \eqref{eq:defConservative}). Furthermore, by \cite[Chap.~4, Thm.~2.7 and Thm.~4.1]{Ethier1986a}, for any $\mu_0 \in \mathcal{P}(E)$ the martingale problem for $(\mathcal{A},\mu_0)$ is well-posed and the solution has sample paths in $D_E[0,\infty)$. Hence, Assumption~\ref{ass:FellerProcess} indeed holds. 
\end{example}

\begin{assumption}[Regularity of $H$]\label{ass:Hregular}
  Let $(P_x)_{x \in E}$ as in Assumption~\ref{ass:FellerProcess} and $H \colon E \to [0,\infty)$ measurable. Assume that for any $x \in E$, $H$ is regular for~$P_x$ (in the sense of Definition~\ref{def:regular}).
\end{assumption}

Assumption~\ref{ass:Hregular} is needed to guarantee uniqueness of the time-change equations~\eqref{eq:ode time-change}. Proposition~\ref{prop:regular} provides a useful criterion to verify it.

\begin{assumption}[Recurrence and boundedness]\label{ass:Recurrence} 
  Let $(\mathcal{D},\mathcal{A})$ and $(P_x)_{x \in E}$ be as in Assumption~\ref{ass:FellerProcess} and Remark~\ref{rmk:Px} and let $\thor > 0$ and $\sigma = H \tilde{\sigma}$ be as in Assumption~\ref{ass:sigma}. Assume that
  \begin{enumerate}
    \item[(i)] for any $x \in E$, $P_x$ is recurrent (in the sense of Definition~\ref{def:recurrence}) and $H$ is bounded on compacts,
    \item[(ii)] $\sigma \mathcal{A} f \in C_0([0,\thor] \times E)$ for all $f \in \mathcal{D}$.
  \end{enumerate}
\end{assumption}

Assumption~\ref{ass:Recurrence}~(ii) can be seen as a weak locality assumption, which is always satisfied for Brownian motion:  

\begin{example}
  Set $\mathcal{D} := C_c^\infty(\mathbb{R})$ and $\mathcal{A} f(x) := \frac{1}{2} f''(x)$ for $f \in \mathcal{D}$. By \cite[Chap.~5, Prop.~1.1]{Ethier1986a} and Example~\ref{rmk:FellerEx}, Assumption~\ref{ass:FellerProcess} is satisfied and under $P_x$ the canonical process~$Z$ is a Brownian motion started from $x$ so (the first part of) Assumption~\ref{ass:Recurrence}~(i) holds. A sufficient condition for Assumption~\ref{ass:Recurrence}~(ii) to hold is that $\sigma$ is continuous on $[0,\thor]\times \mathbb{R}$: If this is true, then $(t,x) \mapsto \sigma(t,x) \mathcal{A}f(x)$ is continuous and even compactly supported for all $f \in \mathcal{D}$.
\end{example}

\begin{assumption}[Boundedness of $\sigma$]\label{ass:SigmaBounded} 
  $\sigma \colon [0,\infty) \times E \to [0,\infty)$ is bounded.
\end{assumption}

As we will see in Section~\ref{sec:time-change}, Assumption~\ref{ass:Recurrence}~(i) or \ref{ass:SigmaBounded} ensure that there exists a non-exploding solution to the time-change equation~\eqref{eq:ode time-change} below.

\section{Time-Inhomogeneous Time-Changes for Markov Processes}\label{sec:time-change}

Given a Markov process~$M$ with generator~$\mathcal{A}$ and a sufficiently regular time-inhomogeneous coefficient $\sigma$, our aim is to obtain a Markov process~$X$ with generator~$\sigma \mathcal{A}$. The new Markov process~$X$ is identified as a time-change of $M$, where the time-change~$\tau$ is characterized by the pathwise Carath\'eodory differential equation 
\begin{equation}\label{eq:ode time-change}
  \tau(t) = \int_0^t \sigma(s,M_{\tau(s)}) \dd s, \quad t\in [0,\thor].
\end{equation}

\subsection{Constructing the Time-Change}

Due to the time-inhomogeneity of the coefficient $\sigma$ in the differential equation~\eqref{eq:ode time-change}, we need to include the time variable $t$ in the state space of the time-changed process $X$. This time-inhomogeneity prevents us to directly rely on well-known results as for example \cite[Chap.~6, Thm.~1.1]{Ethier1986a}. Therefore, we verify as a first step that equation~\eqref{eq:ode time-change} indeed has a solution. 

\begin{lemma}\label{lem:timechange}
  Let $M$ be an $E$-valued process on $(\Omega,\mathcal{F},\P)$ with $\P$-almost surely RCLL sample paths. Denote by $P$ the law on $D_E[0,\infty)$ of $M$. Assume that 
  \begin{itemize}
    \item $\thor > 0$ and $\sigma = H \tilde{\sigma}$ are given as in Assumption~\ref{ass:sigma},
    \item $H$ is regular for $P$ (in the sense of Definition~\ref{def:regular}),
    \item either $H$ is bounded on compacts and $P$ is recurrent (see Definition~\ref{def:recurrence}) or Assumption~\ref{ass:SigmaBounded} holds.
  \end{itemize}
  Then there exists a family of random times $(\tau(t))_{t \in [0,\thor]}$ such that 
  \begin{enumerate}
    \item[(i)] $\tau \colon [0,\thor] \to [0,\infty)$ is non-decreasing and absolutely continuous, $\P$-a.s.,  
    \item[(ii)] $\tau (\thor)$ is finite, $\P$-a.s., 
    \item[(iii)] $\tau$ solves the Carath\'eodory differential equation~\eqref{eq:ode time-change} for $M$.
  \end{enumerate}
\end{lemma}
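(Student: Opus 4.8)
The plan is to construct $\tau$ pathwise by solving for its inverse. Fix $\omega$ so that $m(\cdot):=M_\cdot(\omega)$ is a c\`adl\`ag path (which holds $\P$-a.s.) and set $\rho:=\inf\{u\ge 0 : H(m_u)=0\}$. Iterating the forward equation~\eqref{eq:ode time-change} directly is awkward, because the vector field $u\mapsto \sigma(t,m_u)=H(m_u)\tilde{\sigma}(t,m_u)$ is only measurable in the state variable $u$. Instead I would work with the inverse time-change $\gamma:=\tau^{-1}$, which formally solves
\[
  \gamma'(u)=\frac{1}{\sigma(\gamma(u),m_u)}=\frac{1}{H(m_u)\,\tilde{\sigma}(\gamma(u),m_u)},\qquad \gamma(0)=0 .
\]
The point of this reformulation is that the unknown $\gamma$ now enters the right-hand side \emph{only} through $\tilde{\sigma}$, and by Assumption~\ref{ass:sigma} the map $s\mapsto\tilde{\sigma}(s,x)$ is Lipschitz and bounded away from $0$ and $\infty$ on compacts --- exactly the structure a Carath\'eodory-type argument can digest, whereas the singular, merely measurable factor $H(m_u)^{-1}$ now appears only as a coefficient in the time variable.

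Next I would solve this inverse equation, run until $\gamma$ first attains the value $\thor$ (recall $\tilde{\sigma}(\cdot,\cdot)\equiv 0$ beyond $\thor$, so only the range $\gamma\le\thor$ is relevant). On $[0,\rho)$ one has $H(m_u)>0$, and the regularity of $H$ (Definition~\ref{def:regular}) guarantees that $u\mapsto H(m_u)^{-1}$ is integrable on compact subsets of $[0,\rho)$ while $\int_0^u H(m_r)^{-1}\dd r\uparrow\infty$ as $u\uparrow\rho$. Consequently the coefficient $(u,g)\mapsto\big(H(m_u)\tilde{\sigma}(g,m_u)\big)^{-1}$ is locally integrable in $u$ and Lipschitz in $g$ with the locally integrable Lipschitz bound $C_1C_2^{-2}H(m_u)^{-1}$, where $C_1,C_2$ are the constants from Assumption~\ref{ass:sigma} attached to the (relatively compact) range of $m$ on the relevant interval. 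The standard Carath\'eodory existence theorem then produces a strictly increasing, absolutely continuous solution $\gamma$. (The degenerate case $\rho=0$ is handled separately: there $\sigma(\cdot,m_0)\equiv 0$ and one takes $\tau\equiv 0$.)

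The crux is non-explosion, namely showing that $\gamma$ reaches the level $\thor$ at a finite old-time strictly before $\rho$; this is where the two alternative hypotheses enter. It suffices to prove $\gamma(\rho-)=\infty$, equivalently $\int_0^{\rho}\big(H(m_u)\tilde{\sigma}(\gamma(u),m_u)\big)^{-1}\dd u=\infty$. If $\rho<\infty$, regularity of $H$ forces $\int_0^{\rho}H(m_u)^{-1}\dd u=\infty$, and since $\tilde{\sigma}\le C_3$ on the compact range of $m$ over $[0,\rho]$ the integral is infinite. If $\rho=\infty$, then under Assumption~\ref{ass:SigmaBounded} one has $\gamma(u)\ge u/\|\sigma\|\to\infty$; alternatively, if $P$ is recurrent (Definition~\ref{def:recurrence}) with $H$ bounded on compacts, the path spends infinite time in a neighbourhood $B_a(m_0)$ with compact closure, on which $H\tilde{\sigma}\le C'$ for a finite $C'$, whence $\int_0^\infty\big(H(m_u)\tilde{\sigma}(\gamma(u),m_u)\big)^{-1}\dd u\ge (C')^{-1}\int_0^\infty\mathbbm{1}_{B_a(m_0)}(m_u)\dd u=\infty$. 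In every case $\gamma$ surpasses $\thor$ at some finite old-time $u^\ast<\rho$.

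It then remains to invert and verify the claims. Setting $\tau:=\gamma^{-1}$ on $[0,\thor]$ gives a non-decreasing map with $\tau(\thor)=u^\ast<\infty$, which is (ii) and makes $\tau$ well defined on all of $[0,\thor]$. The inverse-function identity $\tau'(t)=1/\gamma'(\tau(t))=\sigma(t,m_{\tau(t)})$ together with integration recovers exactly~\eqref{eq:ode time-change}, giving (iii); and since the integrand $\sigma(s,m_{\tau(s)})$ is nonnegative and bounded on $[0,\thor]$ (by $C_3$ times the supremum of $H$ over the compact range of $m$ on $[0,u^\ast]$), the integral representation shows $\tau$ is Lipschitz, hence absolutely continuous, which is (i). The one remaining, essentially routine, point --- which I would flag as the main bookkeeping obstacle alongside the non-explosion analysis above --- is measurability in $\omega$: the additive functional $u\mapsto\int_0^uH(m_r)^{-1}\dd r$, the Carath\'eodory solution realised as a Picard limit, and the monotone inverse all depend measurably on the c\`adl\`ag path, so that $(\omega,t)\mapsto\tau(t)(\omega)$ is jointly measurable and $(\tau(t))_{t\in[0,\thor]}$ is a genuine family of random times.
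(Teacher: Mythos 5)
Your construction is, in spirit, exactly the paper's: solve the \emph{inverse} clock $\gamma$ (the paper's $\mathcal{T}$ in \eqref{eq:Delta}) via a Carath\'eodory equation in which the unknown enters only through the Lipschitz-in-time factor $\tilde\sigma$, with the singular factor $H(m_u)^{-1}$ relegated to an integrable coefficient, and then invert. The existence step, the treatment of the case where the path never hits $\{H=0\}$ (recurrence resp.\ boundedness of $\sigma$), and the inversion/Lipschitz argument for (i) all match the paper. However, there is a genuine gap in your non-explosion step, namely the claim ``if $\rho<\infty$, regularity of $H$ forces $\int_0^{\rho}H(m_u)^{-1}\dd u=\infty$, hence $\gamma(\rho-)=\infty$.'' Definition~\ref{def:regular} only asserts that the \emph{blow-up time} $\inf\{s:\int_0^s H(m_u)^{-1}\dd u=\infty\}$ equals $\rho$; since $s\mapsto\int_0^s H(m_u)^{-1}\dd u$ is nondecreasing, this is perfectly compatible with $\int_0^{\rho}H(m_u)^{-1}\dd u<\infty$, the blow-up occurring immediately \emph{after} $\rho$ rather than at $\rho$. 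This is not a pathology but the prototypical absorbing case: take $E=\R$, $M$ a Brownian motion started at $1$, $H(x)=|x|$, $\tilde\sigma\equiv 1$ on $[0,\thor]$. Then all hypotheses of the lemma hold ($H$ is regular for $P_1$ by Example~\ref{ex:regularAt0isregular}, $P_1$ is recurrent, $H$ is bounded on compacts), and $\rho=T_0$ is the hitting time of $0$; yet the occupation time formula and the first Ray--Knight theorem give
\begin{equation*}
  \E\left[\int_0^1 \frac{L_{T_0}^x}{x}\dd x\right]=\int_0^1 \frac{2x}{x}\dd x = 2<\infty,
  \qquad \int_1^\infty \frac{L_{T_0}^x}{x}\dd x \le T_0 <\infty \ \ \text{a.s.},
\end{equation*}
so that $\int_0^{T_0}|B_u|^{-1}\dd u<\infty$ almost surely. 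Hence $\gamma(\rho-)$ is a.s.\ finite, and on the (positive-probability, for suitable $\thor$) event $\{\gamma(\rho-)<\thor\}$ your map $\tau:=\gamma^{-1}$ is simply undefined on $[\gamma(\rho-),\thor]$: your proof produces no $\tau(t)$ there at all.

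This missing case is precisely what the paper's construction is built to handle. In \eqref{eq:stoppingTime} the time-change is defined as the right inverse of $\mathcal{T}$ \emph{capped at} $\rho$, so that once the clock $\mathcal{T}$ converges to a finite limit below $t$, one sets $\tau(t)=\rho$, i.e.\ the time-changed path freezes at $M_\rho$. The point of the \emph{second} requirement in Definition~\ref{def:regular} --- $H(M_\rho)=0$ on $\{\rho<\infty\}$, recorded as \eqref{eq:regularInLemma1} --- is exactly to verify, in step (iii) of the paper's proof, that this frozen path is still a solution of \eqref{eq:ode time-change}: on $[\chi,\thor]$ the integrand is $\sigma(s,M_{\tau(s)})=H(M_\rho)\tilde\sigma(s,M_\rho)=0$, consistently with $\tau$ being constant equal to $\rho$ there. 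Note also that neither recurrence nor boundedness of $\sigma$ can rule this scenario out (the Brownian example above satisfies both alternatives' relevant parts), so the freezing case must be treated as a separate branch of the construction, not argued away. The remainder of your argument is sound once this branch is added.
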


\begin{proof}
  Using the conventions $\inf \emptyset := \infty$ and $[0,0):=\{0\}$, we define the random time
  \begin{equation}\label{eq:rho}
    \rho := \inf \left\lbrace s \in [0,\infty)\,:\,\int_0^s H(M_u)^{-1} \dd u =\infty \right\rbrace 
  \end{equation}
  and the random times 
  \begin{equation}\label{eq:stoppingTime} 
    \tau(t) := \begin{cases}
                   \inf\{s \in [0,\rho)\,:\, \mathcal{T}(s) \geq t\} \wedge \rho  & \text{if }t\in [0,\thor)\\
                   \sup_{s \in [0,\thor)} \inf\{r \in [0,\rho)\,:\, \mathcal{T}(r) \geq s\} \wedge \rho  & \text{if }t=\thor
                 \end{cases}      
  \end{equation} 
  where $\mathcal{T}$ is given by the Carath\'eodory differential equation 
  \begin{equation}\label{eq:Delta} 
    \mathcal{T}(s) = \int_0^s \sigma(\mathcal{T}(r),M_r)^{-1} \dd r,\quad s \in [0, \tau(\thor)).
  \end{equation}
  In other words, $\tau$ is the right inverse of $\mathcal{T}$ until $\sigma$ becomes $0$, then one sets $\tau=\rho$. To prove that $(\tau(t))_{t\in [0,\thor]}$ is well-defined, it is sufficient to show that equation~\eqref{eq:Delta} has indeed a unique solution on the interval $[0,\tau(t)\wedge T]$ for every $t\in [0,\thor)$ and $T\in [0,\rho)$. Let us fix $t \in (0,\thor)$, an RCLL sample path of $M$ denoted by $(M_s(\omega))_{s\in [0,T]}$ for $\omega \in \Omega$ and $T\in[0,\rho(\omega))$. By the RCLL property, the path $(M_s(\omega))_{s\in [0,T]}$ is contained in a compact set $K \subset E$, i.e. $\{M_{s}(\omega) \,:\,s \in [0,T]\} \subset K$, and $N(\omega):=\{s \in [0,T]\,:\, H(M_s(\omega)) = 0 \}$ is a Lebesgue null set by definition of $\rho$. Therefore, $\sigma(r,M_s(\omega)) > 0$ and so $\gamma(r,s) := \sigma(r,M_s(\omega))^{-1}$ is well-defined for $r \in [0,t]$ and $s\in[0,T]\setminus N(\omega)$ and we set $\gamma(\cdot,s):= 1$ for $s\in N(\omega)$. Since $\{M_{s}(\omega)\,:\, s \in [0,T]\} \subset K$, by Assumption~\ref{ass:sigma} on $\sigma=H\tilde \sigma$ there exist $C_1, C_2 > 0$ such that $C_2 H(M_t)^{-1} \leq \gamma(u,s) $ and 
  \begin{equation*}
    |\gamma(u,s) - \gamma(v,s)| = H(M_s(\omega))^{-1}\bigg|\frac{\tilde{\sigma}(u,M_s(\omega))-\tilde{\sigma}(v,M_s(\omega))}{ \tilde{\sigma}(u,M_s(\omega))\tilde{\sigma}(v,M_s(\omega))}\bigg| \leq C_2^2 C_1 H(M_s(\omega))^{-1} |u-v| 
  \end{equation*}
  for all $u,v \in [0,t]$ and $s \in [0,T]$. Thus, $\gamma$ satisfies the assumptions of Lemma~\ref{lem:timeinverse}, which says that there exists a unique solution $\mathcal{T}$ of the Carath\'eodory differential equation~\eqref{eq:Delta} on the interval $[0,\tau(t)\wedge T]$. Moreover, since now $\tau(s)$ is well-defined for all $s\in [0,\thor)$ and $\tau(\thor)= \sup_{s < \thor} \tau(s)$, $\tau(\thor)$ is also well-defined.\smallskip 
  
  Note that if Assumption~\ref{ass:sigma}~(ii) holds also for $S=t_0$, we set
  \begin{equation*} 
    \tau(t) :=  \inf\{s \in [0,\rho)\,:\, \mathcal{T}(s) \geq t\} \wedge \rho, \quad t\in [0,\thor],
  \end{equation*} 
  instead of~\eqref{eq:stoppingTime} and the above argument works for $t=t_0$ as well.\smallskip
  
  \textit{(i)} By definition of $\mathcal{T}$ through equation~\eqref{eq:Delta}, $\mathcal{T}$ is absolutely continuous and strictly increasing on $[0,\tau(t))$ for every $t\in [0,\thor)$ and thus invertible with $\tau(s) = \mathcal{T}^{-1}(s)$ for $s \in [0,t)$. This implies that $\tau$ is also non-decreasing and absolutely continuous on $[0,\thor]$ (cf. \cite[Thm.~1.7 and Ex.~3.21]{Leoni2009}).\smallskip 

  \textit{(ii)} To verify that $\tau(\thor) < \infty$, $\P$-a.s., suppose first $H$ is bounded on compacts and $P$ is recurrent. Let $N:=\{ \omega \in \Omega \,:\, \tau(\thor)(\omega) = \infty\}$. Then for $\omega \in N$, equation~\eqref{eq:Delta} has a solution on $[0,\infty)$ and $\mathcal{T}(t)(\omega) < \thor$ for all $t \geq 0$.
  Fixing some $a > 0$, we notice that by Assumption~\ref{ass:sigma} and since $H$ is bounded on compacts, there exists a constant $C_1 > 0$ such that $\sigma(\mathcal{T}(s),M_s)^{-1} \geq C_1$ for $s\in \{t\geq 0\,:\, M_t(\omega) \in B_a(M_0(\omega))\}$ and $\omega \in N$. We therefore have
  \begin{align}\label{eq:DeltaLim}
    t_0 \geq \lim_{t\to \infty} \mathcal{T}(t) = \int_0^\infty \sigma(\mathcal{T}(s),M_s)^{-1} \dd s &\geq \int_0^\infty \mathbbm{1}_{B_a(M_0)}(M_s) \sigma(\mathcal{T}(s),M_s)^{-1} \dd s\nonumber \\
    &\geq C_1 \int_0^\infty \mathbbm{1}_{B_a(M_0)}(M_s) \dd s
  \end{align}
  on $N$. However, the right-hand side of \eqref{eq:DeltaLim} is infinite, $\P$-a.s., by the recurrence assumption and hence \eqref{eq:DeltaLim} can only hold on a null set. Thus $N$ is a $\P$-null set, i.e. $\tau(\thor) < \infty$, $\P$-a.s., as claimed.
  Supposing Assumption~\ref{ass:SigmaBounded} holds, a similar argument works.
  \smallskip 
  
  \textit{(iii)} For every $t\in [0,\thor]$ such that $\tau(t)<\rho$, one observes that 
  \begin{equation}\label{eq:proof lemma timechange 2}
    1=\frac{\d}{\d s}(\mathcal{T} (\tau(s))= \frac{\d}{\d s}\mathcal{T}(\tau (s) )\frac{\d}{\d s}\tau (s)= \sigma (\tau (s),M_{\tau (s)})^{-1} \frac{\d}{\d s}\tau (s)
  \end{equation}
  for almost all $s\in [0,\mathcal{T}(\tau(t))]$, where the chain rule (c.f. \cite[Thm.~3.44]{Leoni2009}) and \eqref{eq:Delta} was used. Therefore, the absolutely continuity of $\tau$ and identity \eqref{eq:proof lemma timechange 2} show that $\tau$ indeed solves the desired integral equation \eqref{eq:ode time-change} since
  \begin{equation*}
    \tau(t)= \int_0^t \frac{\d}{\d s}\tau(s) \dd s = \int_0^t \sigma(s,M_{\tau(s)}) \dd s.
  \end{equation*} 
  For every $t\in [0,\thor]$ such that $\tau(t)\geq \rho$, we denote $\chi := \inf \{t\in [0,\thor]\,:\,\tau(t)\geq \rho\}$. Notice that $\tau$ is constant (equal to $\rho$) on $[\chi,\thor]$ and, $\P$-a.s., $H(M_\rho) = 0$, by the assumption that $H$ is regular for the law of $M$.\footnote{\label{fn:regular} In fact, here only \eqref{eq:regularInLemma1} is used and so the statement of the Lemma~\ref{lem:timechange} could be modified accordingly. \eqref{eq:regularInLemma2} is only used for uniqueness of the time-change in Lemma~\ref{lem:uniquenessOfTimeChange} below.}
  In particular, $\tau(t)$ satisfies equation~\eqref{eq:ode time-change} for every $t\in [\chi,\thor]$ as well. Recall that the assumption on $H$ means that $\rho$ defined in~\eqref{eq:rho} satisfies 
  \begin{align}\label{eq:regularInLemma1}
    & H(M_\rho)=0 \text{ on } \{\rho < \infty\},\quad  \P\text{-}a.s.,  \\ \label{eq:regularInLemma2}
    & \rho = \inf \left\lbrace s \in [0,\infty)\,:\, H(M_s) = 0 \right\rbrace, \quad  \P\text{-}a.s.     
  \end{align}  
\end{proof}

In order to create a better understanding of the time-change $\tau$ (defined by the Carath\'eodory differential equation~\eqref{eq:ode time-change}) and the assumptions of Lemma~\ref{lem:timechange}, two remarks are provided for the special case of Brownian motion.

\begin{remark}
  As we have seen in the proof of Lemma~\ref{lem:timechange}, \eqref{eq:recurrent} or Assumption~\ref{ass:SigmaBounded} is required to ensure that the random time $\tau(\thor)$ is $\P$-almost surely finite. More precisely, we used 
  \begin{equation}\label{eq:delta finite}
    \mathcal{T}(t)= \int_0^t \sigma(\mathcal{T}(s),M_s)^{-1} \dd s = \thor 
  \end{equation}
  for some finite $t\in [0,\infty)$, $\P$-a.s. For example, if $M$ is a Brownian motion under $\P$, it is possible to verify condition~\eqref{eq:delta finite} a posteriori by the uniqueness in law of the time-changed process $X_s:=M_{\tau(s)}$ for $s\in [0,\thor]$ as done by Bass~\cite{Bass1983}.   
\end{remark}

\begin{remark}
  Let us consider the time-homogeneous case where $M$ is a Brownian motion under~$\P$. Suppose that $\sigma = H$ and we have a unique finite solution $(\tau(t))_{t\in[0,\thor]}$ to the differential equation~\eqref{eq:ode time-change}. Then the time-changed process $X_t:=M_{\tau(t)}$ is a weak solution to the Brownian stochastic differential equation (SDE) 
  \begin{equation*} 
    \d X_t = \sqrt{H(X_t)} \dd M_t, \quad X_0 = x_0\in \R, \quad t\in [0,\thor].
  \end{equation*}
  From Engelbert and Schmidt~\cite{Engelbert1985} (see also \cite[Chap.~5, Thm.~5.7]{Karatzas1991a}) we know that a solution to this SDE exists and uniqueness in law holds for this SDE if and only if $\{ x \in \mathbb{R}\,:\, H(x) = 0 \} = I(H)$. Such a function $H$ is regular for (the law of) $M$ under $\P$ in the sense of Definition~\ref{def:regular}, see Example~\ref{ex:regularAt0isregular}. For example, if $H(x) = |x|^\alpha$ for $\alpha \in (0,1)$, then there is no uniqueness in law. However, the so-called fundamental solution in sense of \cite{Engelbert1985} is unique in law: The weak solution of the SDE satisfying $H(X_s(\omega)) > 0$ $\text{Leb} \otimes \P$-almost surely is unique in law, i.e. a solution that \textit{does not spend time at the zeros of $H$}. This is exactly how we construct the time-change: No time is spent at the zeros of $H$ until the first time at which $1/H$ is not integrable anymore, then we stop.
\end{remark}

\subsection{Solution to the Associated Martingale Problem}

In the next lemma we link the martingale problem for the given process $M$ to the martingale problem for the time-changed process~$X$.

\begin{lemma}\label{lem:time-change-process}
  Let $\sigma$, $M$ and $(\mathcal{F}_t)_{t \geq 0}$ be given as in Lemma~\ref{lem:timechange}. For $(\tau(t))_{t \in [0,\thor]}$ as in Lemma~\ref{lem:timechange} the process $(X_t)_{0 \leq t \leq \thor}$ is defined by $X_t := M_{\tau(t)}$. 
  Suppose that for some $f$, $g \in C_0(E)$ the process 
  \begin{equation*} 
    M^{f,g}_t := f(M_t) - f(M_0) - \int_0^t g(M_s) \dd s,\quad t\in [0,\thor],
  \end{equation*}
  is an $(\mathcal{F}_t)$-martingale and $\sigma g$ is bounded. Then the process $(\tilde{M}_s^{f,g})_{0 \leq s \leq \thor}$, given by
  \begin{equation*}  
    \tilde{M}_t^{f,g} := f(X_t) - f(X_0) - \int_0^t \sigma(s,X_s)g(X_s) \dd s,\quad t\in [0,\thor],
  \end{equation*}
  is a martingale w.r.t. the right-continuous completion of the filtration generated by $X$. 
\end{lemma}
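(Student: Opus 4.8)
The plan is to recognise $\tilde M^{f,g}$ as the original martingale $M^{f,g}$ evaluated along the time-change. First I would note that for each $t$ the random time $\tau(t)$ is an $(\mathcal F_s)$-stopping time: by \eqref{eq:Delta} the process $s \mapsto \mathcal T(s)$ is continuous and $(\mathcal F_s)$-adapted, and $\rho$ in \eqref{eq:rho} is a stopping time, so $\tau(t) = \inf\{s : \mathcal T(s) \ge t\}\wedge \rho$ is a stopping time under the usual conditions. Next, since $\tau$ solves \eqref{eq:ode time-change} and is absolutely continuous and non-decreasing by Lemma~\ref{lem:timechange}, its derivative is $\tau'(s) = \sigma(s, M_{\tau(s)})$ for a.e.\ $s$. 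Substituting $X_s = M_{\tau(s)}$ and applying the change-of-variables formula for absolutely continuous functions (cf.\ \cite[Thm.~3.44]{Leoni2009}) gives
\begin{equation*}
  \int_0^t \sigma(s,X_s) g(X_s)\dd s = \int_0^t g(M_{\tau(s)})\,\tau'(s)\dd s = \int_0^{\tau(t)} g(M_u)\dd u .
\end{equation*}
Together with $X_0 = M_0$ and $X_t = M_{\tau(t)}$ this yields the key identity $\tilde M^{f,g}_t = M^{f,g}_{\tau(t)}$, valid $\P$-a.s.\ for every $t \in [0,\thor]$.

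\textbf{Optional sampling.} It then suffices to show that $(M^{f,g}_{\tau(t)})_{t\in[0,\thor]}$ is a martingale with respect to the time-changed filtration $\mathcal G_t := \mathcal F_{\tau(t)}$, which is increasing because $\tau$ is non-decreasing. Formally this is the optional sampling theorem applied to the martingale $M^{f,g}$ and the stopping times $\tau(s)\le \tau(t)$.

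\textbf{Main obstacle.} The difficulty is that $\tau(t)$ is only $\P$-a.s.\ finite (Lemma~\ref{lem:timechange}~(ii)) and need not be bounded, so the elementary form of optional sampling does not apply directly. I would circumvent this by localising at the bounded stopping times $\tau(t)\wedge n$, for which optional sampling gives $\E[M^{f,g}_{\tau(t)\wedge n}\mid \mathcal F_{\tau(s)\wedge n}] = M^{f,g}_{\tau(s)\wedge n}$, and then letting $n\to\infty$. The hypothesis that $\sigma g$ is bounded is exactly what makes this limit work: since $\abs{f}\le \|f\|$ and, by the same change of variables as above,
\begin{equation*}
  \abs{\int_0^{\tau(t)\wedge n} g(M_u)\dd u} \le \int_0^{\tau(\thor)} \abs{g(M_u)}\dd u = \int_0^{\thor}\sigma(s,X_s)\abs{g(X_s)}\dd s \le \thor\,\|\sigma g\|,
\end{equation*}
the family $\{M^{f,g}_{\tau(t)\wedge n}\}_{n}$ is uniformly bounded, hence uniformly integrable; as $\tau(t)<\infty$ a.s.\ it is moreover eventually constant in $n$ and converges to $M^{f,g}_{\tau(t)}$. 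Passing to the limit in the conditional expectation then yields the martingale property of $(M^{f,g}_{\tau(t)})_t$ for $(\mathcal G_t)_t$.

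\textbf{Transfer to the filtration generated by $X$.} Finally I would descend from $(\mathcal G_t)$ to the filtration generated by $X$. Because $M$ is progressively measurable and $\tau(u)\le\tau(t)$ are stopping times, each $X_u = M_{\tau(u)}$ (for $u\le t$) is $\mathcal F_{\tau(t)}$-measurable, so $\mathcal F^X_t \subseteq \mathcal G_t$; and $\tilde M^{f,g}_t = M^{f,g}_{\tau(t)}$ is itself $\mathcal F^X_t$-measurable. The tower property then gives
\begin{equation*}
  \E[\tilde M^{f,g}_t\mid \mathcal F^X_s] = \E[M^{f,g}_{\tau(t)}\mid \mathcal F^X_s] = \E\big[\,\E[M^{f,g}_{\tau(t)}\mid \mathcal G_s]\mid \mathcal F^X_s\big] = \E[M^{f,g}_{\tau(s)}\mid \mathcal F^X_s] = \tilde M^{f,g}_s
\end{equation*}
for $s\le t$. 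Since $\tau$ is continuous and $M$ is RCLL, the process $X$, and hence $\tilde M^{f,g}$, has right-continuous paths, so the standard argument upgrades the $(\mathcal F^X_t)$-martingale property to the right-continuous completion of the filtration generated by $X$, which is the claim.
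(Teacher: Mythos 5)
Your proposal is correct and takes essentially the same route as the paper: both identify $\tilde{M}^{f,g}_t = M^{f,g}_{\tau(t)}$ via the change-of-variables formula for absolutely continuous time-changes, then apply optional sampling using exactly the bounds coming from $\|f\|$, $\|\sigma g\|$ and the $\P$-a.s. finiteness of $\tau(\thor)$, and finally descend to the right-continuous completion of the filtration generated by $X$ via the measurability of $\tilde{M}^{f,g}_t$. The only (cosmetic) difference is that the paper invokes a ready-made optional sampling theorem for a.s. finite stopping times (\cite[Chap.~2, Thm.~2.13]{Ethier1986a}) and verifies its hypotheses directly, whereas you localize at the bounded stopping times $\tau(t)\wedge n$ and pass to the limit by uniform boundedness.
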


\begin{proof}
  First observe that $\tilde{M}_t^{f,g} = M^{f,g}_{\tau(t)}$ for $t\in [0,\thor]$. Indeed, since $\tau(\cdot)$ is monotone and absolutely continuous on $[0,\thor]$ and the function $s \mapsto g(M_s)$ is integrable on $[0,\tau(\thor)]$, $\P$-a.s., a change of variables (cf. \cite[Cor.~3.57]{Leoni2009}) leads to 
  \begin{equation*} 
    \int_0^{\tau(t)} g(M_s) \dd s = \int_0^t g(M_{\tau(s)})\sigma(s,M_{\tau(s)}) \dd s  = \int_0^t \sigma(s,X_s) g(X_s) \dd u,\quad t \in [0,\thor],
  \end{equation*}
  and thus $\tilde{M}_t^{f,g} = M^{f,g}_{\tau(t)}$.
  
  Therefore, it is sufficient to verify that $(M^{f,g}_{\tau(t)})_{t\in [0,\thor]}$ is a martingale. For this purpose we rely on the optional sampling theorem (see e.g. \cite[Chap.~2, Thm.~2.13]{Ethier1986a}) and check its conditions: Because $f\in C_0(E)$ and $\sigma g$ is bounded, there exists a constant $C:=C(f,g) > 0$ with $|\tilde{M}_t^{f,g}| \leq C$ for $t\in [0,\thor]$ and in particular $\sup_{t \in [0,\thor]} \E[|\tilde{M}_t^{f,g}|] < \infty$. Since $\tau(s)$ is finite for every $s\in [0,\thor]$, $\P$-a.s., and since for every $T < \tau(s)$ there exists $\tilde{s} \in [0,s)$ such that $\tau(\tilde{s}) = T$, we have  $|M^{f,g}_T| = |M^{f,g}_{\tau(\tilde{s})}| = |\tilde{M}^{f,g}_{\tilde{s}}| \leq C$ and 
  \begin{equation*}
    \lim_{T \to \infty} \E\big [|M^f_T|\mathbbm{1}_{\{\tau(s) > T\}}\big] \leq C \lim_{T \to \infty} \P(\tau(s) > T) = 0.
  \end{equation*} 
  Hence, for $u,v \in [0,\thor]$ with $u \leq v$ the optional sampling theorem gives 
  \begin{equation*} 
    \E\big [M^{f,g}_{\tau(v)} \big| \mathcal{F}_{\tau(u)}\big] = M^{f,g}_{\tau(u)},
  \end{equation*}
  which means that $(\tilde{M}_t^{f,g})_{t\in [0,\thor]}$ is an $(\mathcal{F}_{\tau(t)})$-martingale. Because $\tilde{M}^{f,g}_t$ is measurable with respect to $\sigma(X_s\,:\, s\leq t)$ for $t\in [0,\thor]$, $\tilde{M}^{f,g}$ is also a martingale with respect to the usual augmentation of the filtration generated by $X$, see e.g. \cite[Thm.~II.2.8]{Revuz1999} or \cite[Lem.~II.67.10]{Rogers2000}.
\end{proof}

Based on the previous Lemma~\ref{lem:time-change-process}, the time-changed process~$X$ is a solution to the ``time-changed'' martingale problem and the marginal distributions of $X$ satisfy the corresponding Fokker--Planck equation:

\begin{proposition}\label{prop:density time changed}
  Let $\mu_0 \in \mathcal{P}(E)$, $\mathcal{D} \subset C_0(E)$ and $\mathcal{A}\colon \mathcal{D} \to C_0(E)$ be linear. Let $\sigma$ and $M$ be given as in Lemma~\ref{lem:timechange} and assume in addition that $M$ is a solution on $(\Omega,\mathcal{F},\mathbb{P})$ to the RCLL-martingale problem for $(\mathcal{A},\mu_0)$ and either Assumption~\ref{ass:Recurrence}~(ii) or Assumption~\ref{ass:SigmaBounded} holds. Let us denote by $p(t,\cdot)$ the law of $X_t = M_{\tau(t)} = M_{\int_0^t \sigma(s,X_s) \dd s}$ as constructed in Lemma~\ref{lem:time-change-process} with $p(0,\cdot) = \mu_0$ and $\tau(t):= \tau(\thor)$ for $t > \thor$, where $\thor$ is as in Assumption~\ref{ass:sigma}. Then one has: 
  \begin{itemize}
    \item $X$ is a solution to the (time-inhomogeneous) $D_E[0,\infty)$-martingale problem for $(\sigma \mathcal{A}, \mu_0)$,
    \item for any $g \in B([0,\infty)\times E)$ the function $s \mapsto \int_E g(s,x)\, p(s,\d x)$ is measurable, 
    \item $(p(s,\d x))_{s \in [0,\thor]}$ satisfies the Fokker--Planck equation, i.e., for any $f \in \mathcal{D}$,
          \begin{equation}\label{eq:KolmogorovEqnAux}
            \int_E f(x)\, p(t,\d x) - \int_E f(x) \,\mu_0(\d x) = \int_0^t \int_E \sigma(s,x) \mathcal{A}f(x)\, p(s,\d x) \dd s,\quad t \in [0,\thor].
          \end{equation}
  \end{itemize}
\end{proposition}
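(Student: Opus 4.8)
The plan is to verify the three bullet points in order, leaning heavily on Lemma~\ref{lem:time-change-process} for the first, a monotone-class argument for the second, and Fubini combined with the first two for the third.

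For the first bullet, I would observe that by hypothesis $M$ solves the RCLL-martingale problem for $(\mathcal{A},\mu_0)$, so for every $f \in \mathcal{D}$ the process $M_t^{f,\mathcal{A}f} = f(M_t) - f(M_0) - \int_0^t \mathcal{A}f(M_s)\dd s$ is an $(\mathcal{F}_t)$-martingale. To invoke Lemma~\ref{lem:time-change-process} with $g = \mathcal{A}f$, I must check that $\sigma \mathcal{A}f$ is bounded. This is where Assumption~\ref{ass:Recurrence}~(ii) or Assumption~\ref{ass:SigmaBounded} enters: under the former, $\sigma \mathcal{A}f \in C_0([0,\thor]\times E)$ is in particular bounded, and under the latter $\sigma$ itself is bounded while $\mathcal{A}f \in C_0(E)$ is bounded, so the product is bounded. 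Lemma~\ref{lem:time-change-process} then gives that $\tilde{M}_t^{f,\mathcal{A}f} = f(X_t) - f(X_0) - \int_0^t \sigma(s,X_s)\mathcal{A}f(X_s)\dd s$ is a martingale with respect to the (right-continuous, completed) filtration generated by $X$. Since this holds for every $f \in \mathcal{D}$, and $X$ has RCLL paths (as $M$ does and $\tau$ is continuous and non-decreasing), $X$ is a solution to the $D_E[0,\infty)$-martingale problem for $(\sigma\mathcal{A},\mu_0)$. I should also note $\P \circ X_0^{-1} = \P \circ M_0^{-1} = \mu_0$ because $\tau(0) = 0$.

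For the second bullet, I would argue by a monotone-class / functional-form argument. The map $(s,x) \mapsto p(s,\d x)$ is a transition kernel, and measurability of $s \mapsto \int_E g(s,x)\,p(s,\d x)$ holds trivially for $g$ of product form $g(s,x) = a(s)b(x)$ with $a$ measurable bounded and $b \in C_b(E)$, since $s \mapsto \int_E b(x)\,p(s,\d x) = \E[b(X_s)]$ is measurable (indeed the path $s \mapsto X_s$ is measurable and $b$ is continuous bounded). Finite sums of such products, hence all of $B([0,\infty)\times E)$ via the functional monotone class theorem together with bp-limits, then inherit the property. The main technical point to confirm here is the joint measurability of $(s,\omega)\mapsto X_s(\omega)$, which follows from progressive measurability of $X$.

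For the third bullet, I would fix $f \in \mathcal{D}$ and take expectations in the martingale identity from the first bullet: since $\tilde{M}^{f,\mathcal{A}f}$ is a martingale started (at $t=0$) with value determined by $X_0$, taking $\E[\,\cdot\,]$ and using $\E[\tilde{M}_t^{f,\mathcal{A}f}] = \E[\tilde{M}_0^{f,\mathcal{A}f}] = 0$ yields
\begin{equation*}
  \E[f(X_t)] - \E[f(X_0)] = \E\!\left[\int_0^t \sigma(s,X_s)\mathcal{A}f(X_s)\dd s\right].
\end{equation*}
The left side equals $\int_E f(x)\,p(t,\d x) - \int_E f(x)\,\mu_0(\d x)$ by definition of $p(t,\cdot)$ and $\mu_0 = p(0,\cdot)$. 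For the right side I would apply Fubini--Tonelli to interchange $\E$ and the time integral; this is justified because $\sigma\mathcal{A}f$ is bounded (as established above), so the integrand is bounded on the finite interval $[0,\thor]$. The interchange produces $\int_0^t \E[\sigma(s,X_s)\mathcal{A}f(X_s)]\dd s = \int_0^t \int_E \sigma(s,x)\mathcal{A}f(x)\,p(s,\d x)\dd s$, which is exactly~\eqref{eq:KolmogorovEqnAux}; the second bullet guarantees the inner integral is a measurable function of $s$ so the outer integral is well-defined. The only mild obstacle is bookkeeping the boundedness needed for Fubini uniformly across the two alternative assumptions, but in both cases boundedness of $\sigma\mathcal{A}f$ on $[0,\thor]\times E$ settles it.
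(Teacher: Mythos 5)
Your proposal is correct and takes essentially the same approach as the paper: the first bullet via Lemma~\ref{lem:time-change-process} with $g=\mathcal{A}f$ after noting that $\sigma\mathcal{A}f$ is bounded under Assumption~\ref{ass:Recurrence}~(ii) or~\ref{ass:SigmaBounded}, and the third bullet by taking expectations in the martingale identity and interchanging via Fubini. The only divergence is the second bullet, where the paper argues more directly: joint measurability of $(\omega,s)\mapsto g(s,X_s(\omega))$ together with the measurability statement in Fubini's theorem handles arbitrary $g\in B([0,\infty)\times E)$ at once, so your monotone-class extension from product-form functions, while valid, is an unnecessary detour given that you already invoke the joint measurability of $(s,\omega)\mapsto X_s(\omega)$ as your base case.
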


\begin{proof}
  If $f \in \mathcal{D}$, then $\sigma \mathcal{A} f$ is bounded by Assumption~\ref{ass:Recurrence}~(ii) or~\ref{ass:SigmaBounded}. Combining this with our assumption that $M$ is a solution to the RCLL-martingale problem for $(\mathcal{A},\mu_0)$ and with Lemma~\ref{lem:time-change-process}, we obtain that
  \begin{equation*} 
    \tilde{M}_t^f := f(X_t) - f(X_0) - \int_0^t \sigma(s,X_s) \mathcal{A} f (X_s) \dd s ,\quad t \geq 0,
  \end{equation*}
  is a martingale. In particular, one has $\E[\tilde{M}_t^f] = 0$ for all $t \in [0,\thor]$. Since $\sigma \mathcal{A} f$ is bounded, applying Fubini's theorem yields~\eqref{eq:KolmogorovEqnAux}. Finally, $X\colon \Omega \times [0,\infty) \to E$ is measurable and thus so is $(\omega,s) \mapsto (s,X_s(\omega))$. Hence, for $g \in B([0,\infty) \times E)$ also $(\omega,s) \mapsto g(s,X_s(\omega))$ is measurable, and so, by the measurability statement in Fubini's theorem, also $s \mapsto \int_E g(s,x)\, p(s,\d x)$ is measurable.
\end{proof}

\section{A Uniqueness Result for the Fokker--Planck Equation with Degenerate Coefficients}\label{sec:uniqueness}

If $X$ is a solution to the martingale problem for $(\sigma \mathcal{A}, \mu_0)$ and $p(t,\cdot)$ is the law of $X_t$, then according to the proof of Proposition~\ref{prop:density time changed},
\begin{equation}\label{eq:measurabilityKolmogorovEqn}
  \text{ for any } g \in B([0,\infty)\times E), \, s \mapsto \int_E g(s,x)\, p(s,\d x) \text{ is measurable}
\end{equation}
and for all $f$ nice enough it holds that
\begin{equation}\label{eq:KolmogorovEqn}
  \int_E f(x)\, p(t,\d x) - \int_E f(x) \,\mu_0(\d x) = \int_0^t \int_E \sigma(s,x) \mathcal{A}f(x)\, p(s,\d x) \dd s,\quad t \in [0,\thor].
\end{equation}
Conversely, one may ask if solutions $(p(t,\cdot))_{t \in [0,t_0]}$ to \eqref{eq:KolmogorovEqn} can arise differently. In this section, we provide sufficient conditions which guarantee that the Fokker--Planck equation~\eqref{eq:KolmogorovEqn} (also called Kolmogorov forward equation) uniquely characterizes the law of~$X$, i.e. the one-dimensional marginal laws of~$X$ are the only family of probability measures that satisfy \eqref{eq:KolmogorovEqn} for a large class of functions~$f$. More precisely, we prove the following result on (existence and) uniqueness result to the Fokker--Planck equation for time-inhomogeneous operators:

\begin{theorem}\label{thm:KolmogorovUniqueness} 
  Suppose $\sigma$ and $(\mathcal{D},\mathcal{A})$ satisfy Assumptions~\ref{ass:sigma}, \ref{ass:FellerProcess}, \ref{ass:Hregular} and either \ref{ass:Recurrence} or \ref{ass:SigmaBounded}. Let $\thor$ be as in Assumption~\ref{ass:sigma} and $\mu_0 \in \mathcal{P}(E)$. Then existence and uniqueness hold for~\eqref{eq:KolmogorovEqn}: 
  \begin{enumerate}
    \item[(i)] There exists a family of probability measures~$(p(t,\cdot))_{0\leq t \leq \thor}$ on $E$ which satisfies~\eqref{eq:measurabilityKolmogorovEqn} and~\eqref{eq:KolmogorovEqn} for all $f \in \mathcal{D}$ and $p(0,\cdot) = \mu_0$.
    \item[(ii)] If $(q(t,\cdot))_{0\leq t \leq \thor}$ and $(p(t,\cdot))_{0\leq t \leq \thor}$ are two families of probability measures on $E$ which both satisfy~\eqref{eq:measurabilityKolmogorovEqn} and~\eqref{eq:KolmogorovEqn} for all $f \in \mathcal{D}$ and $q(0,\cdot) = \mu_0 = p(0,\cdot)$, then $q(s,\cdot) = p(s,\cdot)$ for all $s \in [0,\thor]$.
  \end{enumerate}
\end{theorem}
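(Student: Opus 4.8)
The plan is to obtain existence as a direct corollary of the constructions in Section~\ref{sec:time-change}, and to establish uniqueness by reducing the time-inhomogeneous problem, via the random time-change, to the well-posedness of the \emph{homogeneous} martingale problem granted by Assumption~\ref{ass:FellerProcess}~(ii). For part~(i), I would first use Assumption~\ref{ass:FellerProcess}~(ii) to produce an RCLL solution $M$ to the martingale problem for $(\mathcal{A},\mu_0)$ on some stochastic basis. All hypotheses of Lemma~\ref{lem:timechange} are then in force: regularity of $H$ is Assumption~\ref{ass:Hregular}, and the non-explosion input is supplied by Assumption~\ref{ass:Recurrence}~(i) or by Assumption~\ref{ass:SigmaBounded}; hence the clock $\tau$ exists with the stated properties. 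Setting $X_t := M_{\tau(t)}$ and invoking Proposition~\ref{prop:density time changed} (whose extra hypothesis is Assumption~\ref{ass:Recurrence}~(ii) or~\ref{ass:SigmaBounded}), the family $p(t,\cdot) := \P\circ X_t^{-1}$ satisfies the measurability property~\eqref{eq:measurabilityKolmogorovEqn} and the Fokker--Planck equation~\eqref{eq:KolmogorovEqn} with $p(0,\cdot)=\mu_0$, which settles existence.

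For part~(ii), the guiding principle is that~\eqref{eq:KolmogorovEqn} should pin down precisely the one-dimensional marginals of the time-inhomogeneous martingale problem for $(\sigma\mathcal{A},\mu_0)$, and that these marginals are in turn determined by undoing the time-change. The first ingredient is a \emph{lifting step}: any family $(p(t,\cdot))$ satisfying~\eqref{eq:measurabilityKolmogorovEqn} and~\eqref{eq:KolmogorovEqn} is realized as the flow of one-dimensional marginals of a process $X$ solving the $D_E[0,\infty)$-martingale problem for $(\sigma\mathcal{A},\mu_0)$. Since $\sigma$ depends on time, this is cleanest on the augmented space $[0,\thor]\times E$ with the time-homogeneous operator $f \mapsto \partial_t f + \sigma\,\mathcal{A}f$, so that the general equivalence between forward equations and martingale problems, in the spirit of~\cite{Ethier1986a} and~\cite{Kurtz1998}, applies; here the density of $\mathcal{D}$ in $C_0(E)$ and its algebra and conservativeness properties (Assumption~\ref{ass:FellerProcess}~(i)), together with the bp-closure mechanics recorded in Remark~\ref{rmk:martingaleProblemBpClosure}, are what make $\mathcal{D}$ measure-determining and permit the reconstruction.

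The second ingredient is a \emph{reduction step}: given any solution $X$ of the martingale problem for $(\sigma\mathcal{A},\mu_0)$, I would run the clock backwards, applying the analogue of Lemma~\ref{lem:time-change-process} to the inverse time-change, to produce a process $M$ solving the martingale problem for $(\mathcal{A},\mu_0)$. By Assumption~\ref{ass:FellerProcess}~(ii) the law of $M$ on $D_E[0,\infty)$ is then uniquely determined, and by uniqueness of the time-change (Lemma~\ref{lem:uniquenessOfTimeChange}, which is exactly where the full strength of Assumption~\ref{ass:Hregular}, i.e.\ property~\eqref{eq:regularInLemma2}, enters) the clock $\tau$ is an a.s.\ determined functional of the path of $M$, so that $X_t = M_{\tau(t)}$ has a uniquely determined law for each $t$. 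Consequently every solution of~\eqref{eq:KolmogorovEqn} has the same one-dimensional marginals; applying this to two candidates $p$ and $q$ yields $p(s,\cdot)=q(s,\cdot)$ for all $s\in[0,\thor]$.

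The step I expect to be the main obstacle is the reduction in the degenerate regime where $H$ vanishes. On $\{\rho<\infty\}$ the process sits at a zero of $H$, the inverse clock $\int_0^\cdot \sigma(s,X_s)^{-1}\dd s$ ceases to be strictly increasing, and one must check both that the reconstructed $M$ genuinely solves the homogeneous martingale problem and that the forward and inverse clocks remain mutual inverses, so that $X$ is faithfully recovered as $M_{\tau(\cdot)}$; this is precisely the role of the regularity Assumption~\ref{ass:Hregular} and the reason~\eqref{eq:regularInLemma2} cannot be dropped. By comparison the lifting step is fairly standard once the problem has been homogenized on $[0,\thor]\times E$, although it still requires verifying that the augmented operator meets the hypotheses of the forward-equation/martingale-problem equivalence---in particular measurability, boundedness of $\sigma\mathcal{A}f$ from Assumption~\ref{ass:Recurrence}~(ii) or~\ref{ass:SigmaBounded}, and the existence of a countable measure-determining subfamily of $\mathcal{D}$.
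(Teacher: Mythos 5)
Your overall architecture coincides with the paper's: existence via the time-change construction (Lemma~\ref{lem:timechange} and Proposition~\ref{prop:density time changed}), and uniqueness by homogenizing time into the state space, proving uniqueness of the resulting martingale problem by undoing the time-change (this is Proposition~\ref{prop:uniquenessBoundedSigma}, resting on Lemma~\ref{lem:uniquenessOfTimeChange} and Corollary~\ref{cor:MeasurabilityofTimeChange} exactly as you indicate), and then passing from martingale-problem uniqueness to Fokker--Planck uniqueness via the forward-equation/martingale-problem equivalence of \cite{Kurtz1998}. Your part~(i) is complete as stated. In part~(ii), however, your sketch leaves three genuine gaps that the paper must close with substantial work.

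First, under Assumption~\ref{ass:Recurrence} the coefficient $\sigma$ is \emph{not} bounded (only $\sigma \mathcal{A} f$ is), and both of your steps implicitly need boundedness: the reconstruction of the time component of a solution of the homogenized problem (Lemma~\ref{lem:BProperties}~(iii)) and the optional-sampling/inverse-clock argument are carried out only for bounded $\sigma$ (Proposition~\ref{prop:uniquenessBoundedSigma}); the general case requires a localization with stopped martingale problems and truncated coefficients $\sigma_n$ (Proposition~\ref{prop:UniquenessUnbounded}), which is absent from your plan. Second, your ``lifting step'' is not a citable black box: the equivalence you appeal to (Theorem~\ref{thm:KurtzResult}) requires (a) uniqueness of the martingale problem among merely \emph{progressively measurable} solutions --- your reduction only yields uniqueness among RCLL solutions, and the paper must prove that progressively measurable solutions admit RCLL modifications, a statement that is false in general (see \cite{Bhatt2003}) --- and (b) that $\mathcal{L}$ maps into continuous functions; under Assumption~\ref{ass:SigmaBounded} alone $\sigma$ is only measurable, so $\mathcal{L}h$ need not lie in $C_b$, and one must instead invoke the kernel-factorized extension Theorem~\ref{thm:KurtzResult2} as in Lemma~\ref{lem:assSatisfied}~(ii). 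Third, in the reduction step the inverse clock exhausts itself at the finite random time $\tau(\thor-s_0)$, so the reconstructed process solves the homogeneous martingale problem only up to that time; to invoke Assumption~\ref{ass:FellerProcess}~(ii) one must extend it to a genuine solution on all of $[0,\infty)$ by gluing on an independent copy of the Markov process on an enlarged probability space (Lemma~\ref{lem:gluingLemma}). Identifying the degenerate set of $H$ as the main obstacle, as you do, is correct, but it does not by itself supply this gluing construction.
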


While the existence part (Theorem~\ref{thm:KolmogorovUniqueness}~(i)) follows immediately from Proposition~\ref{prop:density time changed}, the rest of the section is devoted to prove the uniqueness result in Theorem~\ref{thm:KolmogorovUniqueness}. 

As we will see, existence and uniqueness of solutions to the time-inhomogeneous Fokker--Planck equation~\eqref{eq:KolmogorovEqn} is closely related to existence and uniqueness of solutions to the martingale problem for the time-homogeneous operator $\sigma \mathcal{A} + \partial_t$ on $C_0([0,\infty)\times E)$ defined in equation~\eqref{eq:InhomogeneousGenerator} below. We show that the martingale problem for this operator is well-posed and the associated time-homogeneous Fokker--Planck equation determines the marginal laws of the solution uniquely. 
\smallskip

In the present context, mainly two difficulties arise: Firstly, $\sigma$ is only locally bounded, time-inhomogeneous and $\{(t,x) \in [0,\infty)\times E \, :\, \sigma(t,x) = 0\} \neq \emptyset$.  Secondly, even if well-posedness for the martingale problem associated to $\sigma \mathcal{A}$ can be established, it is not automatic that any solution to the Fokker--Planck equation corresponds to a solution to the martingale problem for~$\sigma \mathcal{A}$. 
\smallskip
  
The proof of Theorem~\ref{thm:KolmogorovUniqueness}~(ii) relies on the following theorem on uniqueness for the Fokker--Planck equation corresponding to time-homogeneous operators cited\footnote{\label{fn:pregenerator} More precisely, instead of our hypothesis~(iii), in \cite{Kurtz1998} the weaker requirement that $\mathcal{L}$ is a \textit{pre-generator} is imposed. However, as explained in \cite[Sec.~2]{Kurtz1998} or \cite[Remark~1.1]{Kurtz2001}, (iii) implies that $\mathcal{L}$ is a pre-generator.} from \cite{Kurtz1998} (see also \cite[Thm.~4.1]{Bhatt1993}). 

\begin{theorem}[{\cite[Thm.~2.6~(c)]{Kurtz1998}}]\label{thm:KurtzResult}
  Let $(E_0,d_0)$ be a locally compact, complete, separable metric space, $D(\mathcal{L}) \subset C_b(E_0)$ and $\mathcal{L}\colon D(\mathcal{L}) \to C_b(E_0)$ be linear. Let $\nu \in \mathcal{P}(E)$ and suppose that 
  \begin{itemize}
    \item[(i)]$D(\mathcal{L})$ is an algebra and separates points,
    \item[(ii)] there exists a countable subset $\{h_k \,:\, k \geq 1\} \subset D(\mathcal{L})$ such that 
         \begin{equation}\label{eq:separabilityOfDomain}
           \text{bp-closure}(\text{span}(\{(h_k,\mathcal{L} h_k) \, :\, k \geq 1 \})) \supset \{(h,\mathcal{L} h) \, :\, h \in D(\mathcal{L}) \} ,
         \end{equation}
    \item[(iii)] for each $y \in E_0$, there exists a RCLL-solution to the martingale problem for $(\mathcal{L},\delta_y)$,
    \item[(iv)] uniqueness holds for the martingale problem for $(\mathcal{L},\nu)$.
  \end{itemize}
  Then uniqueness holds for the Fokker--Planck equation for $(\mathcal{L},\nu)$: Suppose $\{\nu_t\}_{t \geq 0} \subset \mathcal{P}(E_0)$ is such that
  \begin{equation}\label{eq:timeInhomogeneousmeasurabilityKolmogorovEqn}
    \text{ for any } g \in B(E_0) \, , \, s \mapsto \int_{E_0} g(y)\, \nu_s(\d y) \text{ is measurable}
  \end{equation}
  and
  \begin{equation}\label{eq:timeInhomogeneousPIDE}
    \int_{E_0} h \dd \nu_t = \int_{E_0} h \dd \nu + \int_0^t \int_{E_0} \mathcal{L} h \dd\nu_s  \dd s ,  \quad t \geq 0,
  \end{equation}
  for all $h \in D(\mathcal{L})$. If $\{\mu_t\}_{t \geq 0} \subset \mathcal{P}(E_0)$ also satisfies \eqref{eq:timeInhomogeneousmeasurabilityKolmogorovEqn} and \eqref{eq:timeInhomogeneousPIDE}, then $\mu_t = \nu_t $ for all $t \geq 0$. 
\end{theorem}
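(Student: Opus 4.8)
The plan is to reduce uniqueness for the Fokker--Planck equation to the assumed uniqueness for the martingale problem, the bridge being a representation result: \emph{every} flow $\{\nu_t\}_{t \geq 0}$ satisfying \eqref{eq:timeInhomogeneousmeasurabilityKolmogorovEqn} and \eqref{eq:timeInhomogeneousPIDE} is realised as the one-dimensional marginal flow of some solution to the $D_{E_0}[0,\infty)$-martingale problem for $(\mathcal{L},\nu)$. Granting this, the conclusion is immediate and constitutes the easy half of the argument: if $\{\nu_t\}$ and $\{\mu_t\}$ both satisfy \eqref{eq:timeInhomogeneousmeasurabilityKolmogorovEqn} and \eqref{eq:timeInhomogeneousPIDE}, then (noting $\nu_0 = \nu = \mu_0$ from evaluating \eqref{eq:timeInhomogeneousPIDE} at $t=0$) each is the time-$t$ marginal of a solution to the martingale problem for $(\mathcal{L},\nu)$; by hypothesis~(iv) these two solutions have the same law, so in particular $\mu_t = \nu_t$ for every $t \geq 0$. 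Thus essentially all of the work is in the representation step, which I expect to be the main obstacle.

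For the representation I would follow the Echeverria--Weiss / Bhatt--Karandikar scheme, and the role of the hypotheses is as follows. Hypothesis~(iii) guarantees (as noted in the footnote on pre-generators) that $\mathcal{L}$ is a pre-generator, which is exactly what lets one attach a genuine process to $\mathcal{L}$ rather than treating it as a merely formal operator. Hypothesis~(i), that $D(\mathcal{L})$ is an algebra separating points, ensures that membership in $D(\mathcal{L})$ controls enough functions to pin down laws on $E_0$. Hypothesis~(ii) reduces the uncountably many martingale constraints to the countable family $\{(h_k,\mathcal{L}h_k)\}$: by Remark~\ref{rmk:martingaleProblemBpClosure} the martingale property \eqref{eq:martingaleProblemBpClosure} automatically propagates to the bp-closure of $\mathrm{span}(\{(h_k,\mathcal{L}h_k)\})$, and by \eqref{eq:separabilityOfDomain} this bp-closure already contains all of $\{(h,\mathcal{L}h)\,:\,h\in D(\mathcal{L})\}$. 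So it suffices to build a process for which the finitely many ``generating'' functionals behave correctly and then extend.

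To carry this out I would, for fixed $h \in D(\mathcal{L})$, read \eqref{eq:timeInhomogeneousPIDE} as saying that the absolutely continuous map $t \mapsto \int_{E_0} h\,\d\nu_t$ has derivative $\int_{E_0}\mathcal{L}h\,\d\nu_t$; the measurability condition \eqref{eq:timeInhomogeneousmeasurabilityKolmogorovEqn} is precisely what makes $s \mapsto \int_{E_0}\mathcal{L}h\,\d\nu_s$ measurable, the time integral meaningful, and a jointly measurable version of the marginals available. Using the pre-generator property one then invokes the general principle (\cite[Thm.~4.1]{Bhatt1993}, \cite[Thm.~2.6]{Kurtz1998}) that a flow of measures whose derivative is driven in this way by a pre-generator is the marginal flow of a solution to the associated martingale problem. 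Concretely this means constructing a consistent family of finite-dimensional distributions with the prescribed one-dimensional marginals, passing to path space, and producing via a tightness/compactness argument (using local compactness of $E_0$ together with the fact that $D(\mathcal{L})\subset C_b(E_0)$ separates points) a limiting probability measure on $D_{E_0}[0,\infty)$ supported on RCLL paths.

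The hard part is exactly this passage from the integrated forward equation to an honest path-space solution: checking that the candidate limiting measure is concentrated on RCLL paths, that its time-$t$ marginal really is $\nu_t$, and that the martingale identity \eqref{eq:martingaleProblemBpClosure} holds for the full family $\{(h,\mathcal{L}h)\}$ and not only on the countable core $\{(h_k,\mathcal{L}h_k)\}$ (the latter closing the gap via the bp-closure step above). Since this representation together with the uniqueness conclusion is precisely the content of \cite[Thm.~2.6~(c)]{Kurtz1998}, in the present paper it suffices to cite that result; the sketch above is intended only to indicate the mechanism behind it and to make transparent why hypotheses~(i)--(iv) are the natural ones.
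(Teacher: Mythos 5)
Your proposal is correct and takes the same route as the paper: Theorem~\ref{thm:KurtzResult} is not proved there at all but imported by citation from \cite{Kurtz1998}, with a footnote observing (exactly as you do) that hypothesis~(iii) implies the pre-generator condition assumed in that reference. Your sketch of the representation mechanism behind the cited result --- realising any forward-equation flow as the marginal flow of a solution to the martingale problem for $(\mathcal{L},\nu)$ and then concluding via hypothesis~(iv) --- correctly identifies the roles of hypotheses~(i)--(iv), but nothing beyond the citation itself is required or given in the paper.
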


The rest of Section~\ref{sec:uniqueness} is devoted to the proof of Theorem~\ref{thm:KolmogorovUniqueness}~(ii). The argument is split into three parts and we will only get to the actual proof in the third part. The procedure is as follows:
\begin{itemize}
  \item In Section~\ref{subsec:InhomogGenerator}  the time-inhomogeneous problem is put into the time-homogeneous setup by including time as an additional state variable. The associated generator $\mathcal{L}$ is defined in \eqref{eq:InhomogeneousGenerator}. 
  \item In Section~\ref{subsection:WellposednessMartingaleProblem} well-posedness of the martingale problem for $\mathcal{L}$ is proved.
  \item In Section~\ref{subsec:WellPosednessFOkkerPlanck} the results from Section~\ref{subsec:InhomogGenerator} and \ref{subsection:WellposednessMartingaleProblem} are used to show that Theorem~\ref{thm:KurtzResult} can indeed be applied to prove Theorem~\ref{thm:KolmogorovUniqueness}~(ii).
\end{itemize}

\subsection{Reducing the Problem to the Time-Homogeneous Setup}\label{subsec:InhomogGenerator} 

Fix $(\mathcal{D},\mathcal{A})$ as in Assumption~\ref{ass:FellerProcess} and a measurable function $\sigma\colon[0,\infty)\times E \to [0,\infty)$. For $f \in \mathcal{D}$ and $\gamma \in C_c^1[0,\infty)$, define the operator $\mathcal{L}$ by
\begin{equation}\label{eq:InhomogeneousGenerator}
  \mathcal{L}(f\gamma)(t,x) := \gamma(t)\sigma(t,x)\mathcal{A}f(x) + f(x) \gamma'(t), \quad t \in [0,\infty), \, x \in E,
\end{equation}
and linearly extend $\mathcal{L}$ to $D(\mathcal{L}) := \text{span}\{f \gamma \, : \, f \in \mathcal{D}, \gamma \in C_c^1[0,\infty) \} \subset C_0([0,\infty)\times E)$.
\smallskip

The following lemma relates the Fokker--Planck equation~\eqref{eq:KolmogorovEqn} and the martingale problem for the time-inhomogeneous operator~$\sigma \mathcal{A}$ to the Fokker--Planck equation and the martingale problem for the time-homogeneous operator $\mathcal{L}$ on $C_0([0,\infty)\times E)$. Furthermore, sufficient conditions for~\eqref{eq:separabilityOfDomain} are provided.

\begin{lemma}\label{lem:BProperties}
  Suppose $\sigma\colon[0,\infty)\times E \to [0,\infty)$ is measurable, $(\mathcal{D},\mathcal{A})$ is as in Assumption~\ref{ass:FellerProcess} and $(D(\mathcal{L}),\mathcal{L})$ as in~\eqref{eq:InhomogeneousGenerator}. Further suppose $\sigma \mathcal{A} f $ is bounded for all $f \in \mathcal{D}$. Let $s_0 \geq 0$, $\mu_0 \in \mathcal{P}(E)$ and define $\sigma_{s_0}(s,x) := \sigma(s_0+s,x)$. Then the following hold:
  \begin{itemize}
    \item[(i)] If $\sigma$ is bounded, then $(D(\mathcal{L}),\mathcal{L})$ is conservative, i.e.~\eqref{eq:defConservative} holds.
    \item[(ii)] If $X$ is a solution to the (time-inhomogeneous) RCLL-martingale problem for $(\sigma_{s_0} \mathcal{A} , \mu_0)$, then $(s_0 + t,X_t)_{t \geq 0}$ is a solution to the RCLL-martingale problem for $(\mathcal{L},\delta_{s_0} \otimes \mu_0)$.
    \item[(iii)] If $\sigma$ is bounded and $(T,X)$ is a solution to the RCLL-martingale problem for $(\mathcal{L},\delta_{s_0} \otimes \mu_0)$, then $X$ is a solution to the (time-inhomogeneous) RCLL-martingale problem for $(\sigma_{s_0} \mathcal{A} , \mu_0)$ and $T$ is indistinguishable from $(s_0+t)_{t \geq 0}$.
    \item[(iv)] Suppose $(p(t,\cdot))_{t \in [0,\thor]}$ satisfies \eqref{eq:measurabilityKolmogorovEqn} and \eqref{eq:KolmogorovEqn} for all $f \in \mathcal{D}$ and define $p(t,\cdot) := p(\thor,\cdot)$ and $\sigma(t,\cdot) := 0$ for $t > \thor$ and for all $t \in [0,\infty)$ the measures $\nu_t := \delta_{t} \otimes p(t,\cdot)$ on $E_0 :=[0,\infty) \times E$. Then
    $\{\nu_t\}_{t \geq 0}$ satisfies \eqref{eq:timeInhomogeneousmeasurabilityKolmogorovEqn} and \eqref{eq:timeInhomogeneousPIDE} for all $h \in D(\mathcal{L})$.
    \item[(v)] Suppose either $\sigma \mathcal{A} f \in C_0([0,\thor] \times E)$ for all $f \in \mathcal{D}$ or $\sigma$ is bounded. Then there exists a countable subset $\{h_k \, :\, k \in \mathbb{N}\} \subset D(\mathcal{L})$ such that \eqref{eq:separabilityOfDomain} holds.
  \end{itemize}
\end{lemma}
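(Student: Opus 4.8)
The statement bundles together claims of two different flavours. Claims (ii)--(iv) are ``dictionary'' statements that merely translate objects attached to the time-inhomogeneous operator $\sigma\mathcal A$ into objects attached to the time-homogeneous operator $\mathcal L$, and they all rest on one elementary product rule. Claims (i) and (v) are structural properties of $(D(\mathcal L),\mathcal L)$, namely conservativeness and separability of its graph, while the identification of the time component in (iii) combines the two. I would therefore organise the argument by technique rather than by the listed order.

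For the dictionary claims I would use integration by parts throughout. For (ii), start from the hypothesised $(\mathcal F_t^X)$-martingale $N_t:=f(X_t)-f(X_0)-\int_0^t\sigma(s_0+s,X_s)\mathcal Af(X_s)\dd s$ and observe, via integration by parts (or elementarily, by approximating the $C^1$ factor $g(\cdot):=\gamma(s_0+\cdot)$ by step functions and invoking the martingale property of $N$), that
\[
g(t)f(X_t)-g(0)f(X_0)-\int_0^t\bigl(g(s)\sigma(s_0+s,X_s)\mathcal Af(X_s)+g'(s)f(X_s)\bigr)\dd s
\]
is a martingale; this is exactly $(f\gamma)(s_0+t,X_t)-(f\gamma)(s_0,X_0)-\int_0^t\mathcal L(f\gamma)(s_0+s,X_s)\dd s$, and linearity in $f\gamma$ together with the initial law $\delta_{s_0}\otimes\mu_0$ finishes (ii). Part (iv) is the same product rule for deterministic functions: writing $F(s):=\int_E f\dd p(s,\cdot)$ and $G(s):=\int_E\sigma(s,x)\mathcal Af(x)\,p(s,\d x)$, equation \eqref{eq:KolmogorovEqn} together with the extension $\sigma\equiv0$, $p\equiv p(\thor,\cdot)$ beyond $\thor$ gives $F(t)=F(0)+\int_0^tG(s)\dd s$ for all $t\ge0$, so $F$ is locally Lipschitz and integrating $\tfrac{\d}{\d s}(\gamma F)=\gamma'F+\gamma G$ yields \eqref{eq:timeInhomogeneousPIDE} for $h=f\gamma$; the measurability \eqref{eq:timeInhomogeneousmeasurabilityKolmogorovEqn} is immediate from \eqref{eq:measurabilityKolmogorovEqn}.

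Claim (i) I would obtain by tensoring the conservativeness sequence of $(\mathcal D,\mathcal A)$ with a time cutoff: pick $h_n\in\mathcal D$ with $(1,0)=\bplim_n(h_n,\mathcal Ah_n)$ and $\gamma_n\in C_c^1[0,\infty)$ with $0\le\gamma_n\le1$, $\gamma_n\equiv1$ on $[0,n]$ and $\sup_n\|\gamma_n'\|<\infty$, and set $H_n:=h_n\gamma_n$. Then $H_n\to1$ boundedly and pointwise and $\mathcal LH_n=\gamma_n\sigma\mathcal Ah_n+h_n\gamma_n'\to0$ pointwise, where boundedness of $\sigma$ is used precisely to bound the first summand uniformly by $\|\sigma\|\sup_n\|\mathcal Ah_n\|$, giving \eqref{eq:defConservative}. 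For (iii) I would first feed the same tensor trick into Remark~\ref{rmk:martingaleProblemBpClosure}: letting $n\to\infty$ in $(h_n\gamma,\mathcal L(h_n\gamma))$ places the pair $(\gamma,\gamma')$, as a function of the time variable alone, in the bp-closure, so $\gamma(T_t)-\gamma(T_0)-\int_0^t\gamma'(T_s)\dd s$ is a martingale for every $\gamma\in C_c^1$. Taking expectations and using truncated versions of $y\mapsto y$ and $y\mapsto y^2$ I would compute $\E[T_t]=s_0+t$ and $\E[T_t^2]=(s_0+t)^2$, whence $\operatorname{Var}(T_t)=0$ and, by right-continuity, $T_t=s_0+t$ simultaneously for all $t$. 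Substituting $T_s=s_0+s$ and choosing $\gamma\equiv1$ on an interval containing $[0,s_0+v]$ (so that $\gamma(T_s)=1$, $\gamma'(T_s)=0$ for $s\le v$) reduces $(f\gamma)(T_\cdot,X_\cdot)$ to exactly \eqref{eq:InHomMartingaleProblemDef} on $[0,v]$; as $v$ is arbitrary this gives the martingale property on $[0,\infty)$.

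The real work is (v), and the obstacle is the degenerate case. The plan is to take the countable set as products $h_{i,j}:=f_i\gamma_j$ with $\{\gamma_j\}$ dense in $C_c^1[0,\infty)$ for the $C^1$-norm with supports in a common compact, and $\{f_i\}$ a countable dense subset of $\mathcal D$ in a suitable graph metric; the whole point is which metric. When $\sigma$ is bounded it suffices to make $\{f_i\}$ dense for $\|f\|+\|\mathcal Af\|$ (using that $f\mapsto(f,\mathcal Af)$ embeds $\mathcal D$ into the separable space $C_0(E)\times C_0(E)$), since then $\|\sigma\mathcal A(f_i-f)\|\le\|\sigma\|\,\|\mathcal A(f_i-f)\|\to0$. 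When $\sigma$ is only locally bounded, uniform convergence of $\mathcal Af_i$ no longer controls $\sigma\mathcal Af_i$; here I would enrich the embedding to $f\mapsto(f,\mathcal Af,\sigma\mathcal Af)\in C_0(E)\times C_0(E)\times C_0([0,\thor]\times E)$, which is still an injection into a separable space by the hypothesis $\sigma\mathcal Af\in C_0([0,\thor]\times E)$, and take $\{f_i\}$ dense there, forcing $\sigma\mathcal Af_i\to\sigma\mathcal Af$ uniformly. In either case, for a general $h=\sum_m f^{(m)}\gamma^{(m)}\in D(\mathcal L)$ I would approximate all factors simultaneously and check that $\sum_m f_i^{(m)}\gamma_j^{(m)}\to h$ and $\mathcal L\bigl(\sum_m f_i^{(m)}\gamma_j^{(m)}\bigr)\to\mathcal Lh$ boundedly and pointwise; a diagonal choice of $(i,j)$ then exhibits $(h,\mathcal Lh)$ as a bp-limit of elements of $\operatorname{span}\{(h_{i,j},\mathcal Lh_{i,j})\}$, which is \eqref{eq:separabilityOfDomain}. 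Approximating the finite sum as a whole, rather than term by term, is what avoids having to know that the bp-closure of a span is closed under addition.
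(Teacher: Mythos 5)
Your proposal is correct, and on parts (i) and (iv) it coincides with the paper's own argument (tensoring the conservativeness sequence $\{f_n\}\subset\mathcal D$ with time cutoffs $\gamma_n$, and the deterministic integration by parts for $F(t)=\int_E f(x)\,p(t,\d x)$). Where you genuinely diverge is in (ii), (iii) and (v), and in each case your route is more self-contained where the paper leans on citations. For (ii) the paper invokes \cite[Chap.~4, Thm.~7.1]{Ethier1986a} to pass to the space--time process and then shifts via $\tilde\gamma:=\gamma(\cdot+s_0)$; your step-function/product-rule computation is precisely the proof of that theorem, so nothing is lost (note that boundedness of $\sigma\mathcal Af$, the standing hypothesis, is what makes the Riemann-sum passage legitimate). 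For (iii) both arguments first place $(\gamma,\gamma')$ in the bp-closure of the graph of $\mathcal L$, so that $\gamma(T_t)-\gamma(T_0)-\int_0^t\gamma'(T_s)\dd s$ is a martingale; the paper then identifies $T$ with $(s_0+t)_{t\ge0}$ by proving well-posedness of the martingale problem for $(\partial_t,C_c^1[0,\infty))$ via \cite[Chap.~4, Thm.~4.1]{Ethier1986a} (dissipativity and the range condition are checked in a footnote), whereas you compute moments by truncation and conclude $\operatorname{Var}(T_t)=0$. Your argument does work, but the truncations must be ordered carefully: first a cutoff of $y\mapsto y$ with $|\gamma'|\le1$ gives $\E[T_t]\le s_0+t$, hence integrability; only then does $\P(T_s>R)\to0$ yield the reverse inequality, and the same two-step scheme (upper bound first, to get $\E[T_s^2]<\infty$) is needed for $y\mapsto y^2$. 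Your subsequent recovery of the time-inhomogeneous martingale property of $X$, by taking $\gamma\equiv1$ on $[0,s_0+v]$ and localizing in time, is cleaner than the paper's, which runs a second bp-closure argument to place the pairs $(f,\sigma\mathcal Af)$ in the closure. For (v), in the bounded case your construction is the paper's; in the degenerate case the paper embeds the whole graph $\{(h,\mathcal Lh)\}$ into the separable space $C_0([0,\thor]\times E)\times C_0([0,\thor]\times E)$ and extracts a countable sup-norm dense subset, while your enriched embedding $f\mapsto(f,\mathcal Af,\sigma\mathcal Af)$ achieves the same with product-type $h_k$; your observation that approximating a finite sum as a whole avoids knowing that the bp-closure of a span is again a subspace is a small but genuine simplification of the paper's ``by linearity'' step. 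Two minor points: your phrase ``supports in a common compact'' should read ``for each $\gamma$, approximants supported in a common compact depending on $\gamma$'' (take the union over $K\in\mathbb N$ of countable $C^1$-dense families supported in $[0,K]$); and both your argument and the paper's implicitly use that $\sigma(t,\cdot)\equiv0$ for $t>\thor$ (Assumption~\ref{ass:sigma}) to upgrade uniform convergence on $[0,\thor]\times E$ to bp-convergence on all of $[0,\infty)\times E$ in~\eqref{eq:separabilityOfDomain}.
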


\begin{proof}
  \textit{(i)} Note that by Assumption~\ref{ass:FellerProcess}~(i) and \eqref{eq:defConservative}, there exists $\{f_n\}_{n \in \mathbb{N}} \subset \mathcal{D}$ such that $\bplim_{n \to \infty} f_n = 1$ and $\bplim_{n \to \infty} \mathcal{A} f_n = 0$. Furthermore, there exist $\gamma_n \in C_c^1[0,\infty)$, $n \in \mathbb{N}$, such that $\gamma_n(s) = 1$ for $s \in [0,n]$, $\gamma_n(s) = 0$ for $s \in [n+1,\infty)$ and $\sup_{n \in \mathbb{N}} \sup_{s \in [0,\infty)} |\gamma_n'(s)| < \infty$.\footnote{For instance the functions defined for $n \in \mathbb{N}$ by  $\gamma_n(s) = 1$ for $s \in [0,n]$, $\gamma_n(s) = 0$ for $s \in [n+1,\infty)$ and $\gamma_n(s) = 2(s-n)^3-3(s-n)^2+1$ for $s \in [n,n+1]$ satisfy these properties.} In particular, $\bplim_{n \to \infty} \gamma_n' = 0$, $\bplim_{n \to \infty} f_n \gamma_n = 1$ and, since $\sigma$ is bounded, also $\bplim_{n \to \infty} \mathcal{L}(f_n \gamma_n) = 0$. Thus, \eqref{eq:defConservative} holds with $h_n := f_n \gamma_n$.
  \smallskip
  
  \textit{(ii)} By assumption $\sigma \mathcal{A} f$ is bounded for all $f \in \mathcal{D}$, thus $\mathcal{L} h \in B([0,\infty)\times E)$ for all $h \in D(\mathcal{L})$. Therefore, \cite[Chap.~4, Thm.~7.1]{Ethier1986a} implies that $(t,X_t)_{t \geq 0}$ is a solution to the martingale problem for $(D(\mathcal{L}),\mathcal{L}^{\sigma_{s_0}}) $, where $\mathcal{L}^{\sigma_{s_0}}$ is given in \eqref{eq:InhomogeneousGenerator} with $\sigma$ replaced by $\sigma_{s_0}$. Inserting $h = f \tilde{\gamma}$ and $\mathcal{L}^{\sigma_{s_0}}$ in \eqref{eq:InHomMartingaleProblemDef}, this implies that
  \begin{equation}\label{eq:auxEqn4} 
    \tilde{\gamma}(t)f(X_t)-  \tilde{\gamma}(0)f(X_0) - \int_0^t \tilde{\gamma}(s)\sigma(s_0 + s,X_s)\mathcal{A}f(X_s) \dd s - \int_0^t \tilde{\gamma}'(s)f(X_s) \dd s , \quad t \geq 0,
  \end{equation}
  is an $(\mathcal{F}_t^X)_{t \geq 0}$-martingale for all $f\in \mathcal{D}$ and $\tilde{\gamma} \in C_c^1[0,\infty)$. In particular, for given $\gamma \in C_c^1[0,\infty)$, we can use $\tilde{\gamma} := \gamma(\cdot + s_0)$ (which is again in $C_c^1[0,\infty)$) in \eqref{eq:auxEqn4} to see that 
  \begin{equation*} 
    \gamma(s_0+t)f(X_t) - \gamma(s_0) f(X_0) - \int_0^t \mathcal{L}(f\gamma)(s_0+s,X_s) \dd s ,  \quad t \geq 0,
  \end{equation*}
  is an $(\mathcal{F}_t^X)_{t \geq 0}$-martingale for all $f\in \mathcal{D}$ and $\gamma \in C_c^1[0,\infty)$. By linearity, this extends to all $h \in D(\mathcal{L})$ and therefore $(s_0+t,X_t)_{t \geq 0}$ is a solution to the RCLL-martingale problem for $(\mathcal{L},\delta_{s_0} \otimes \mu_0)$.
  \smallskip

  \textit{(iii)} First note that both $A_1:= \{(\gamma,\gamma')\, :\, \gamma \in C_c^1[0,\infty)\}$ and $A_2:= \{(f,\sigma\mathcal{A}f) \, :\, f \in \mathcal{D} \}$ (viewed as subsets of $B([0,\infty) \times E)$) are contained in $\text{bp-closure}(\{(h,\mathcal{L} h) \, :\, h \in D(\mathcal{L}) \})  $. For $A_1$, this follows by fixing $\gamma$ and using $\{f_n\}_{n \in \mathbb{N}} \subset \mathcal{D}$ from the proof of (i) to obtain $\bplim_{n \to \infty} (f_n \gamma) = \gamma$ and $\bplim_{n \to \infty} \mathcal{L}(f_n \gamma) = \gamma'$ since $\sigma$ is bounded. For $A_2$, this follows analogously by using $\gamma_n$ as defined in the proof of (i) and by noting that for any $f \in \mathcal{D}$, $\bplim_{n \to \infty} (\gamma_n f, \mathcal{L}(\gamma_n f)) = (f,\sigma\mathcal{A}f)$. So, if $(T,X)$ is a solution to the RCLL-martingale problem for $(\mathcal{L},\delta_{s_0} \otimes \mu_0)$, then by Remark~\ref{rmk:martingaleProblemBpClosure} 
  \begin{equation}\label{eq:auxEqn6} 
    h(T_t,X_t) - h(T_0,X_0) - \int_0^t g(T_s,X_s) \dd s,  \quad t \geq 0,
  \end{equation}
  is an $(\mathcal{F}^{(T,X)}_t)_{t \geq 0}$-martingale for all $(h,g) \in \text{bp-closure}(\{(h,\mathcal{L} h) \, :\, h \in D(\mathcal{L}) \}) $ and thus in particular for all $(h,g) \in A_1 \cup A_2$. Inserting $(\gamma,\gamma') \in A_1$ in \eqref{eq:auxEqn6} thus yields that 
  \begin{equation*} 
    \gamma(T_t) - \gamma(T_0) - \int_0^t \gamma'(T_s) \dd s,  \quad t \geq 0,
  \end{equation*}
  is an $(\mathcal{F}^{(T,X)}_t)_{t \geq 0}$-martingale and, since it is $(\mathcal{F}^T_t)_{t\geq 0}$-adapted, also a martingale with respect to $(\mathcal{F}^T_t)_{t\geq 0}$. Thus, $T$ is a solution to the RCLL-martingale problem for $(\partial_t,\delta_{s_0})$, where $\partial_t$ has domain $D(\partial_t) := C_c^1[0,\infty)$ and is defined as $\partial_t \gamma := \gamma'$ for $\gamma \in D(\partial_t)$.  However, $(s_0+t)_{t \geq 0}$ is also a solution to the RCLL-martingale problem for $(\partial_t,\delta_{s_0})$ since $\int_0^t \gamma'(s_0+s) \dd s = \gamma(s_0+t) - \gamma(s_0)$ for all $t \geq 0, \gamma \in D(\partial_t)$. By \cite[Chap.~4, Thm.~4.1]{Ethier1986a} uniqueness holds for the martingale problem for $(\partial_t,\delta_{s_0})$ and in particular $T$ is indistinguishable from $(s_0+t)_{t \geq 0}$.\footnote{To check the assumptions of \cite[Chap.~4, Thm.~4.1]{Ethier1986a} in more detail (see \cite{Ethier1986a} for unexplained definitions), note that $[0,\infty)$ is locally compact, separable, $D(\partial_t)$ is dense in $C_0[0,\infty)$ and $C_0[0,\infty)$ is convergence determining (see \cite[Chap.~3, Prop.~4.4]{Ethier1986a}), hence separating. Furthermore, whenever $\gamma \in D(\partial_t)$, $t^* \geq 0$ satisfy $\gamma(t^*) = \sup_{t\geq 0} \gamma(t)$, then $\gamma'(t^*) = 0$. Thus $\partial_t$ satisfies the positive maximum principle and is hence dissipative by \cite[Chap.~4, Lem.~2.1]{Ethier1986a}. Finally, fix $\lambda > 0$, then for any $g \in C_c^1[0,\infty)$, the function $\gamma(t):=\exp(\lambda t)\int_t^\infty g(s)\exp(-\lambda s) \dd s $ satisfies $\gamma \in D(\partial(t))$ and $\lambda \gamma - \partial_t \gamma = g$ so that the range of the operator $\lambda - \partial_t$ is $C_c^1[0,\infty)$ and in particular dense in $C_0[0,\infty)$.}

  On the other hand, \eqref{eq:auxEqn6} is a martingale for all $(h,g) \in A_2$ as deduced above and so for each $f \in \mathcal{D}$,
  \begin{equation*}
    f(X_t) - f(X_0) - \int_0^t \sigma(T_s,X_s)\mathcal{A}f(X_s) \dd s, \quad t \geq 0,
  \end{equation*}
  is a martingale. Since $T$ is indistinguishable from $(s_0+t)_{t \geq 0}$, the claim follows. 
  \smallskip

  \textit{(iv)} First notice that \eqref{eq:KolmogorovEqn} actually holds for all $t \geq 0$ since $p(t,\cdot) = p(\thor,\cdot)$ and $\sigma(t,\cdot) = 0$ for $t > \thor$. Moreover, for any $f \in \mathcal{D}$, $\sigma \mathcal{A} f$ is bounded by assumption and so the function $s \mapsto \int_E \sigma(s,x) \mathcal{A}f (x) \, p(s,\d x)$ is measurable by \eqref{eq:measurabilityKolmogorovEqn} and bounded. Thus, from \eqref{eq:KolmogorovEqn} we see that $t \mapsto F(t) := \int_E f(x)\, p(t,\d x) $ is absolutely continuous (c.f. \cite[Lem.~3.31]{Leoni2009}) with $F'(t) = \int_E \sigma(t,x) \mathcal{A}f(x) \, p(t,\d x)$ for a.e. $t \geq 0$. Hence, for any $f \in \mathcal{D}$ and $\gamma \in C_c^1[0,\infty)$, we may integrate by parts (see \cite[Cor.~3.37]{Leoni2009}) to obtain
  \begin{align*} 
    \gamma(t) F(t) - \gamma(0) F(0) & = \int_0^t \gamma(s) F'(s) \dd s + \int_0^t \gamma'(s) F(s) \dd s  \\
    & = \int_0^t \int_E \gamma(s) \sigma(s,x)\mathcal{A}f(x)\, p(s,\d x) \dd s + \int_0^t \int_E \gamma'(s) f(x)\, p(s,\d x) \dd s \\ & \stackrel{\mathmakebox[\widthof{=}]{\eqref{eq:InhomogeneousGenerator}}}{=} \int_0^t \int_E \mathcal{L}(\gamma f)(s,x)\, p(s,\d x) \dd s = \int_0^t \int_{E_0} \mathcal{L}(\gamma f) \dd \nu_s \dd s
  \end{align*}
  for any $t \geq 0$. But $\gamma(t)F(t) = \int_{E_0} (f\gamma) \dd \nu_t$ by definition, thus $\{\nu_t\}_{t \geq 0}$ satisfies \eqref{eq:timeInhomogeneousPIDE} for all $h = f \gamma$ and by linearity also for all $h \in D(\mathcal{L})$. Finally, note that by definition of $\{\nu_t\}_{t \geq 0}$ the integral in~\eqref{eq:timeInhomogeneousmeasurabilityKolmogorovEqn} is the same as in~\eqref{eq:measurabilityKolmogorovEqn} and so the result follows.
  \smallskip
   
  \textit{(v)} Assume first $\sigma \mathcal{A} f \in C_0([0,\thor] \times E)$ for all $f \in \mathcal{D}$. Set $E_0 :=[0,\thor] \times E$ and note that the spaces $C_0(E_0)$ and $C_0(E_0)\times C_0(E_0)$ are separable since $E$, $[0,\thor]$ and $E_0$ are separable and because products of separable spaces are separable. For $f \in \mathcal{D}$ and $\gamma \in C^1[0,\thor]$ our assumption and $\mathcal{D} \subset C_0(E)$ imply $f \gamma' \in C_0(E_0)$  and $\gamma \sigma \mathcal{A} f \in C_0(E_0)$. Hence, also $\mathcal{L}(f\gamma) \in C_0(E_0)$ and by linearity, $\mathcal{L} h \in C_0(E_0)$ for any $h \in D(\mathcal{L})$. Setting $G_0:=\{(h,\mathcal{L} h) \, :\, h \in D(\mathcal{L}) \}$, this shows $G_0 \subset C_0(E_0) \times C_0(E_0)$. Since the latter space is separable (as argued above) and any subspace of a separable metric space is separable, we conclude that there exists $H_0 \subset G_0$, $H_0$ countable, such that each $(h,\mathcal{L}h) \in G_0$ is the limit in sup-norm of a sequence in $H_0$. In particular, $G_0 \subset \text{bp-closure}(H_0)$, i.e. \eqref{eq:separabilityOfDomain} holds.
   
  Secondly, assume that $\sigma$ is bounded. The same separability reasoning as above shows that there exist $\{\gamma_k\}_{k \in \mathbb{N}} \subset C_c^1[0,\infty)$ and $\{f_l\}_{l \in \mathbb{N}} \subset \mathcal{D}$ with the property that for any $\gamma \in C_c^1[0,\infty)$ and $f \in \mathcal{D}$, there exist $\{k_n\}_{n \in \mathbb{N}}$, $\{l_n\}_{n \in \mathbb{N}} \subset \mathbb{N}$ such that $\gamma = \lim_{n \to \infty} \gamma_{k_n}$, $\gamma' = \lim_{n \to \infty} \gamma'_{k_n}$, $f = \lim_{n \to \infty} f_{l_n}$ and $\mathcal{A} f = \lim_{n \to \infty} \mathcal{A} f_{l_n}$ in sup-norm. Since $\sigma$ is bounded, this also implies $\bplim_{n \to \infty} \sigma \mathcal{A} f_{l_n} = \sigma \mathcal{A} f$ and thus $\bplim_{n \to \infty} \gamma_{k_n} f_{l_n}  = \gamma f$ and $\bplim_{n \to \infty} \mathcal{L}(\gamma_{k_n} f_{l_n}) = \mathcal{L}(\gamma f)$. Thus, we have shown
  \begin{equation*}
    \{(f \gamma,\mathcal{L}(f\gamma))\, :\, f \in \mathcal{D}, \gamma \in C_c^1[0,\infty)\} \subset  \text{bp-closure}(\{(\gamma_k f_l , \mathcal{L}(\gamma_k f_l)) \}_{k,l \in \mathbb{N}})
  \end{equation*}
  and by linearity this implies \eqref{eq:separabilityOfDomain}.
\end{proof}

\subsection{Well-Posedness of the Martingale Problem}\label{subsection:WellposednessMartingaleProblem}

In this section, we show that the martingale problem for $(D(\mathcal{L}),\mathcal{L})$, see~\eqref{eq:InhomogeneousGenerator} above, is well-posed. The proof is split into three parts: Existence is established in Proposition~\ref{prop:ExistenceUnbounded}, uniqueness is proved in Proposition~\ref{prop:uniquenessBoundedSigma} under the assumption that $\sigma$ is bounded. Finally, in Proposition~\ref{prop:UniquenessUnbounded} the assumption of boundedness is removed. 
\smallskip

To prove existence, we use the time-change construction from Lemma~\ref{lem:timechange}. Extra work is needed to incorporate the time-inhomogeneity.

\begin{proposition}\label{prop:ExistenceUnbounded}
  Suppose $\sigma$ and $(\mathcal{D},\mathcal{A})$ are as in Theorem~\ref{thm:KolmogorovUniqueness} and $(D(\mathcal{L}),\mathcal{L})$ is defined as in~\eqref{eq:InhomogeneousGenerator}. Then for any $s_0 \geq 0$ and $x_0 \in E$ there exists a solution to the RCLL-martingale problem for $(\mathcal{L},\delta_{s_0} \otimes \delta_{x_0})$.
\end{proposition}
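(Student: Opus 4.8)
The plan is to reduce the statement to a time-inhomogeneous martingale problem for a shifted coefficient and then invoke the time-change construction of the previous subsection. Writing $\sigma_{s_0}(s,x)=\sigma(s_0+s,x)$ as in Lemma~\ref{lem:BProperties}, part (ii) of that lemma tells us it suffices to produce a solution $X$ to the (time-inhomogeneous) RCLL-martingale problem for $(\sigma_{s_0}\mathcal{A},\delta_{x_0})$: then $(s_0+t,X_t)_{t\ge 0}$ automatically solves the RCLL-martingale problem for $(\mathcal{L},\delta_{s_0}\otimes\delta_{x_0})$, using that $\sigma\mathcal{A}f$ is bounded for every $f\in\mathcal{D}$ (which holds under Assumption~\ref{ass:Recurrence}(ii) or~\ref{ass:SigmaBounded}). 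For the driving process I would take a realization $M$ of the law $P_{x_0}$, i.e.\ the RCLL solution to the martingale problem for $(\mathcal{A},\delta_{x_0})$, which exists by Assumption~\ref{ass:FellerProcess}(ii).

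Consider first the case $s_0<\thor$. I would check that $\sigma_{s_0}=H\tilde\sigma_{s_0}$ with $\tilde\sigma_{s_0}(s,x)=\tilde\sigma(s_0+s,x)$ again satisfies Assumption~\ref{ass:sigma}, now with horizon $\thor':=\thor-s_0>0$: the factorization through $H$ is unchanged, $\tilde\sigma_{s_0}\equiv 0$ for $s>\thor'$, and the local Lipschitz bound in time and the local two-sided bounds are inherited from those of $\tilde\sigma$ by translating the time interval $[0,S']$ to $[s_0,s_0+S']\subset[0,\thor)$. Moreover $H$ is regular for $P_{x_0}$ by Assumption~\ref{ass:Hregular}, and either $H$ is bounded on compacts with $P_{x_0}$ recurrent (Assumption~\ref{ass:Recurrence}(i)) or $\sigma$---hence $\sigma_{s_0}$---is bounded (Assumption~\ref{ass:SigmaBounded}); finally $\sigma_{s_0}\mathcal{A}f\in C_0([0,\thor']\times E)$ whenever $\sigma\mathcal{A}f\in C_0([0,\thor]\times E)$, by restriction and time-translation. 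Thus the hypotheses of Lemma~\ref{lem:timechange} and Proposition~\ref{prop:density time changed} hold for $\sigma_{s_0}$ and $M$, and the first bullet of Proposition~\ref{prop:density time changed} yields that $X_t:=M_{\tau(t)}$ solves the time-inhomogeneous $D_E[0,\infty)$-martingale problem for $(\sigma_{s_0}\mathcal{A},\delta_{x_0})$. Applying Lemma~\ref{lem:BProperties}(ii) completes this case.

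It remains to treat $s_0\ge\thor$, which is the degenerate boundary case where the shifted horizon $\thor-s_0$ is nonpositive, so Lemma~\ref{lem:timechange} no longer applies directly. Here $\sigma_{s_0}(s,x)=H(x)\tilde\sigma(s_0+s,x)=0$ for Lebesgue-a.e.\ $s\ge 0$, since $\tilde\sigma$ vanishes beyond $\thor$. Consequently the constant process $X_t\equiv x_0$ is progressively measurable, RCLL, has initial law $\delta_{x_0}$, and makes $f(X_t)-f(X_0)-\int_0^t\sigma_{s_0}(s,X_s)\mathcal{A}f(X_s)\,\d s\equiv 0$ for every $f\in\mathcal{D}$, hence trivially a martingale; so $X$ solves the time-inhomogeneous RCLL-martingale problem for $(\sigma_{s_0}\mathcal{A},\delta_{x_0})$ and Lemma~\ref{lem:BProperties}(ii) again concludes.

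The genuinely new analytic input is entirely contained in the earlier lemmas; the main things to get right here are the bookkeeping of the time shift---verifying that $\sigma_{s_0}$ inherits Assumption~\ref{ass:sigma} with the reduced horizon $\thor-s_0$---and the separate handling of the degenerate regime $s_0\ge\thor$, where the time-change is trivial and the constant path must be used instead. I expect no difficulty beyond these two bookkeeping points.
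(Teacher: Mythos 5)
Your proposal is correct and follows essentially the same route as the paper's own proof: reduce via Lemma~\ref{lem:BProperties}~(ii) to the time-inhomogeneous martingale problem for $(\sigma_{s_0}\mathcal{A},\delta_{x_0})$, handle $s_0<\thor$ by verifying that the shifted coefficient $\sigma_{s_0}$ inherits Assumption~\ref{ass:sigma} with horizon $\thor-s_0$ and invoking Proposition~\ref{prop:density time changed} with $M$ the realization of $P_{x_0}$, and handle $s_0\ge\thor$ with the constant process. The only cosmetic difference is that you spell out the verification of the shifted assumptions in slightly more detail than the paper does.
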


\begin{proof}
  Define $\sigma_{s_0}(s,x) :=\sigma(s_0+s,x)$ for $s \geq 0$, $x \in E$. Assumption~\ref{ass:Recurrence}~(ii) or~\ref{ass:SigmaBounded} implies that~$\sigma \mathcal{A} f$ is bounded for any $f \in \mathcal{D}$ and so by Lemma~\ref{lem:BProperties}~(ii) it suffices to show that there exists a solution to the (time-inhomogeneous) RCLL-martingale problem for $(\sigma_{s_0} \mathcal{A} , \delta_{x_0})$.
  
  If $s_0 \geq \thor$, then $\sigma_{s_0}(s,x)=0$ for all $(s,x) \in (0,\infty)\times E$, since $\sigma(t,\cdot) = 0$ for $t > t_0$. Setting $X_t := x_0$ for $t \geq 0$, by definition (c.f.~\eqref{eq:InHomMartingaleProblemDef}) it follows that $X$ is a solution to the (time-inhomogeneous) RCLL-martingale problem for $(\sigma_{s_0} \mathcal{A}, \delta_{x_0})$.  
  
  If $s_0 < \thor$, set $\tilde{\thor}:=\thor-s_0$ and note that $\sigma_{s_0}$ satisfies Assumption~\ref{ass:sigma} on $[0,\tilde{\thor}]$ (and $\sigma_{s_0}(t,\cdot)=0$ for $t > \tilde{\thor}$), since $\sigma$ satisfies Assumption~\ref{ass:sigma}. Furthermore,  $\sigma_{s_0}$ satisfies $\sigma_{s_0}\mathcal{A}f \in C_0([0,\tilde{\thor}]\times E)$ for all $f \in \mathcal{D}$ or $\sigma_{s_0}$ is bounded, since $\sigma$ satisfies Assumption~\ref{ass:Recurrence}~(ii) or \ref{ass:SigmaBounded}. Let $M$ denote the coordinate process on $D_E[0,\infty)$ and $\P=P_{x_0}$ (as defined in Assumption~\ref{ass:FellerProcess}).  Then $\sigma_{s_0}$, $(\mathcal{D},\mathcal{A})$ and $M$ satisfy the assumptions of Proposition~\ref{prop:density time changed}, which implies that there exists a solution to the RCLL-martingale problem for $(\sigma_{s_0} \mathcal{A},\delta_{x_0})$.  
\end{proof}

The next step is to prove uniqueness under the assumption that $\sigma$ is bounded. Combined with Proposition~\ref{prop:ExistenceUnbounded}, well-posedness of the RCLL-martingale problem for $(D(\mathcal{L}),\mathcal{L})$ follows. 
\smallskip

The main idea of the proof is to show that any solution $\tilde{X}$ to the RCLL-martingale problem for $(\mathcal{L},\delta_{(s_0,x)})$ can be written as a time-change
\begin{equation*}
  \tilde X_t = M_{\int_0^t \sigma(s_0+u,\tilde X_u) \dd u},\quad t\geq 0,\, \P\text{-}a.s., 
\end{equation*} 
for $M$ which is a solution to the martingale problem for $(\mathcal{A},\delta_x)$. Corollary~\ref{cor:MeasurabilityofTimeChange} and Assumption~\ref{ass:FellerProcess}~(ii) then allow us to conclude uniqueness. 
\smallskip

Note that if $\sigma(t,x)$ did not depend on $t$, the proof could be simplified significantly by relying on \cite[Chap.~6, Thm.~1.4]{Ethier1986a}. 

\begin{proposition}\label{prop:uniquenessBoundedSigma}
  Suppose $\sigma$ and $(\mathcal{D},\mathcal{A})$ satisfy Assumptions~\ref{ass:sigma}, \ref{ass:FellerProcess}, \ref{ass:SigmaBounded} and \ref{ass:Hregular}. Define $(D(\mathcal{L}),\mathcal{L})$ as in \eqref{eq:InhomogeneousGenerator}, then for each $\nu \in \mathcal{P}([0,\infty)\times E)$ the RCLL-martingale problem for $(\mathcal{L},\nu)$ is well-posed. 
\end{proposition}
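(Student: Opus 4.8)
The plan is to establish existence and uniqueness separately, in both cases reducing to a Dirac initial distribution. \emph{Existence} for $\nu = \delta_{(s_0,x_0)}$ is exactly Proposition~\ref{prop:ExistenceUnbounded}. For general $\nu \in \mathcal{P}([0,\infty)\times E)$ I would obtain a solution by the standard mixing argument for martingale problems: the point-mass solutions form a measurable kernel $(s_0,x_0) \mapsto Q_{(s_0,x_0)}$ on path space, and integrating against $\nu$ produces a solution started from $\nu$ (cf. \cite[Chap.~4]{Ethier1986a}). The only point requiring care is the measurable dependence of the solution on the starting point, which is routine given the explicit time-change construction.

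For \emph{uniqueness} it suffices, by the general reduction of uniqueness to point-mass initial conditions (\cite[Chap.~4]{Ethier1986a}), to treat $\nu = \delta_{(s_0,x)}$. Fix such $(s_0,x)$ and let $(T,X)$ be any solution of the RCLL-martingale problem for $(\mathcal{L},\delta_{s_0}\otimes\delta_x)$. Since $\sigma$ is bounded (Assumption~\ref{ass:SigmaBounded}), Lemma~\ref{lem:BProperties}~(iii) shows that $T$ is indistinguishable from the deterministic clock $(s_0+t)_{t\geq 0}$ and that $X$ solves the time-inhomogeneous RCLL-martingale problem for $(\sigma_{s_0}\mathcal{A},\delta_x)$, where $\sigma_{s_0}(s,\cdot) := \sigma(s_0+s,\cdot)$. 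Thus it is enough to prove that the law of $X$ is uniquely determined.

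The key idea is to recover from $X$ a process $M$ solving the time-homogeneous martingale problem for $(\mathcal{A},\delta_x)$, which is well-posed by Assumption~\ref{ass:FellerProcess}~(ii). Setting $\tau(t) := \int_0^t \sigma(s_0+u,X_u)\dd u$, a continuous nondecreasing additive functional of $X$, I would define $M$ through the generalized inverse of $\tau$, so that $X_t = M_{\tau(t)}$, and verify that $M$ solves the martingale problem for $(\mathcal{A},\delta_x)$; this is the converse direction to Lemma~\ref{lem:time-change-process}. Once this is in place, the law of $M$ on $D_E[0,\infty)$ is uniquely determined, and it remains to recognize $X$ as a fixed measurable functional of $M$: the clock $\tau$ solves the Carath\'eodory time-change equation $\tau(t)=\int_0^t \sigma(s_0+u,M_{\tau(u)})\dd u$ driven by $M$, this solution is unique by Lemma~\ref{lem:uniquenessOfTimeChange} (where Assumption~\ref{ass:Hregular} enters through the second clause of Definition~\ref{def:regular}) and depends measurably on the path of $M$ by Corollary~\ref{cor:MeasurabilityofTimeChange}. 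Writing $\Phi$ for the induced map $M \mapsto M\circ\tau$, we get $X=\Phi(M)$, hence the law of $X$ equals the law of $M$ pushed forward by $\Phi$ and is uniquely determined; together with existence this yields well-posedness.

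The step I expect to be the main obstacle is the inversion: rigorously constructing $M$ from $X$ and proving that $M$ solves the $\mathcal{A}$-martingale problem despite the degenerate set $\{H=0\}$, on which $\sigma$ vanishes and $\tau$ is flat, so that $M$ may evolve while $X$ is frozen. Assumption~\ref{ass:Hregular} (regularity of $H$ for each $P_x$) is precisely what should guarantee that $X$ does not accumulate time at the zeros of $H$ in a pathological way, so that the forward and inverse time-changes are genuine inverses at the path level and the correspondence $X\leftrightarrow M$ is measurable and invertible in law. Controlling this degeneracy, rather than any martingale computation, is where the real work lies.
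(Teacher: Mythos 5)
Your route is the same as the paper's---reduce to $\nu=\delta_{(s_0,x_0)}$, apply Lemma~\ref{lem:BProperties}~(iii) to eliminate the time component, invert the time-change to recover a solution of the $(\mathcal{A},\delta_{x_0})$-martingale problem, and conclude with Corollary~\ref{cor:MeasurabilityofTimeChange} and Assumption~\ref{ass:FellerProcess}~(ii)---but the step you yourself flag as the main obstacle contains two genuine gaps, and the mechanism you propose for closing them (Assumption~\ref{ass:Hregular}) is not the one that works.

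First, the identity $X_t=Y_{\tau(t)}$ for the inverse-time-changed process $Y_u:=X_{\mathcal{T}(u)}$, $\mathcal{T}(u):=\inf\{t\geq 0:\tau(t)\geq u\}$, requires knowing that $X$ is a.s.\ constant on every interval on which $\tau$ is flat. This is not delivered by Assumption~\ref{ass:Hregular}; the paper proves it (step~(i) of its proof) by a martingale argument: $\gamma_t:=\inf\{u>t:\tau(u)>\tau(t)\}$ is a stopping time, optional sampling gives $\E[f(X_{\gamma_t\wedge u})\,|\,\mathcal{F}_t]=f(X_t)$ for every $f\in\mathcal{D}$, and density of $\mathcal{D}$ in $C_0(E)$ together with \cite[Chap.~3, Ex.~7]{Ethier1986a} forces $X_{\gamma_t\wedge u}=X_t$ a.s., which is then upgraded to all $t<u$ simultaneously via rational times and right-continuity. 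Second, and more seriously: since $\sigma(t,\cdot)\equiv 0$ for $t>\thor$, the total accumulated operational time is the a.s.\ finite random variable $\tau(\thor-s_0)$, and $X$ carries no information at all about the putative underlying process beyond that time. So inverting the clock can only ever produce a solution of a \emph{stopped} martingale problem (this is exactly \eqref{eq:auxEqn3} in the paper), never a process $M$ solving the $(\mathcal{A},\delta_{x_0})$-martingale problem on all of $[0,\infty)$, which is what Corollary~\ref{cor:MeasurabilityofTimeChange} needs: it requires the driver to have law $P_{x_0}$, so that Assumption~\ref{ass:Hregular} applies to it. The paper closes this hole with the gluing Lemma~\ref{lem:gluingLemma}: one extends the probability space to $\tilde{\Omega}\times D_E[0,\infty)$ and pastes, after time $\tau(\thor-s_0)$, an independent continuation with law $P_{Y_{\tau(\thor-s_0)}}$ (using measurability of $x\mapsto P_x$); the resulting $\tilde{Y}$ solves the full martingale problem and still satisfies $X=\tilde{Y}_{\tau(\cdot)}$ a.s. This lemma---which the authors prove from scratch for lack of a reference---is the missing ingredient your plan cannot do without. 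A minor further point: your reduction of uniqueness for general $\nu$ to point masses is not a bare conditioning argument; it needs the separability property of Lemma~\ref{lem:BProperties}~(v), and the paper invokes \cite[Thm.~2.1]{Bhatt1993} for exactly this purpose.
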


\begin{proof}
  Firstly, note that it suffices to show that for each $(s_0,x_0) \in [0,\infty)\times E$ the RCLL-martingale problem for $(\mathcal{L},\delta_{(s_0,x_0)})$  is well-posed: If this is established, we can combine \cite[Thm.~2.1]{Bhatt1993} and Lemma~\ref{lem:BProperties}~(v) to conclude that also for any $\nu \in \mathcal{P}([0,\infty)\times E)$ the RCLL-martingale problem for $(\mathcal{L},\nu)$  is well-posed.

  From Proposition~\ref{prop:ExistenceUnbounded} it follows that for any $(s_0,x_0) \in [0,\infty)\times E$ there exists a solution to the RCLL-martingale problem for $(\mathcal{L},\delta_{(s_0,x_0)})$. In order to prove the current proposition, by the above it is therefore sufficient to prove that for any $s_0 \in [0,\infty)$ and any $x_0 \in E$ uniqueness holds for the RCLL-martingale problem for $(\mathcal{L},\delta_{s_0} \otimes \delta_{x_0})$. This will now be established. 

  Set $\mu_0 := \delta_{x_0}$ and suppose $(T,X)$ is a solution RCLL-martingale problem for $(\mathcal{L},\delta_{s_0} \otimes \mu_0)$ defined on some probability space $(\tilde{\Omega},\tilde{\mathcal{F}},\tilde{\P})$. By Lemma~\ref{lem:BProperties}~(iii) it follows that, $\tilde{\P}$-a.s., $T_t = t + s_0$ for all $t \geq 0$ and that $X$ is a solution to the (time-inhomogeneous) RCLL-martingale problem for $(\sigma(s_0+\cdot)\mathcal{A},\mu_0)$, i.e. for each $f \in \mathcal{D}$ the process
  \begin{equation}\label{eq:auxEqn2}
    M_t^f : = f(X_t) - f(X_0) - \int_0^t \sigma(s_0+s,X_s)\mathcal{A}f(X_s) \dd s , \quad t \geq 0,
  \end{equation}
  is a martingale. In the following, we show that this implies that $X$ solves the time-change equation~\eqref{eq:timeChangeGeneral} for some process $M$ that is a solution to the RCLL-problem for $(\mathcal{A},\mu_0)$. By the uniqueness result for the time-change equation in Corollary~\ref{cor:MeasurabilityofTimeChange}, it then follows that the law of $X$ is determined by $\sigma$ and the law of $M$. Since uniqueness holds for the RCLL-martingale problem for $(\mathcal{A},\mu_0)$, it then follows that the law of $X$ is uniquely determined by $\sigma$ and $(\mathcal{A},\mu_0)$ and thus the claim follows.
  \smallskip
  
  Before we start, let us consider the case $s_0 \geq \thor$. Since $\sigma(t,x) = 0 $ for all $t > \thor, x \in E$, the integral term in \eqref{eq:auxEqn2} vanishes for any $f \in \mathcal{D}$. In particular, $f(X)$ is a martingale for any $f \in \mathcal{D}$. Combining this with our Assumption~\ref{ass:FellerProcess}~(i) that $\mathcal{D}$ is dense in $C_0(E)$, this implies (see \cite[Chap.~3, Ex.~7]{Ethier1986a}) that $X_t = X_0$, $\tilde{\P}$-a.s., for any $t \geq 0$. However, $X$ is RCLL and thus $X$ is constant $\tilde{\P}$-a.s. In particular, the law of $(T,X)$ is uniquely determined.
  
  Thus, we may assume $s_0 < \thor$ and in analogy to \eqref{eq:ode time-change} define 
  \begin{equation*}
    \tau(t) := \int_0^t \sigma(s_0 + u, X_u ) \dd u,
  \end{equation*} 
  for each $t \geq 0$, and set $\mathcal{T}(u) := \inf\{t \geq 0 \, : \, \tau(t) \geq u \} $ for each $u \geq 0$ and $Y_u := X_{\mathcal{T}(u)}$ for $u \leq \tau(\thor-s_0)$. Note that $\sigma$ is bounded and $\sigma(s,\cdot) = 0$ for $s > \thor$, thus $\tilde{\P}$-a.s. $\tau(\thor-s_0) < \infty$ and $\tau(s) = \tau(\thor-s_0)$ for all $s > \thor-s_0$. 
  We now claim that 
  \begin{itemize}
    \item[(i)] with probability one, $X$ is constant on any interval $[t,u]$ with $\int_t^u \sigma(s_0+s,X_s) \dd s = 0$,
    \item[(ii)] $\tilde{\P}$-a.s., $X$ satisfies the time-change equation $X_t = Y_{\tau(t)}$ for all $t \geq 0$ and $Y$ is RCLL,
    \item[(iii)] on an extended probability space there exists a process $\tilde{Y}$ that is a solution to the RCLL-martingale problem for $(\mathcal{A},\mu_0)$ that satisfies $\tilde{Y}_u = Y_u $ for $u \leq \tau(\thor-s_0)$ and such that $X = \tilde{Y}_{\tau(t)}$ still holds a.s. Furthermore, this implies the claim.
  \end{itemize} 
  
  To prove (i), we define  
  \begin{equation*} 
    \gamma_t := \inf\{ u > t \, : \, \tau(u) > \tau(t) \}\quad \text{for each } t \geq 0
  \end{equation*}
  so that for any $u > t$,  $\int_t^{\gamma_t \wedge u} \sigma(s_0+s,X_s) \dd s = 0$  and thus (recall $\sigma \geq 0$) also \begin{equation}\label{eq:auxEqn8} 
    \sigma(s_0+s,X_s) = 0\quad \text{ for a.e. }s \in [t,\gamma_t \wedge u],  \, \tilde{\P}\text{-a.s.} 
  \end{equation}
  Furthermore, for $u < t$, $ \{ \gamma_t \leq u \} = \emptyset$ and for $u \geq t$,
  \begin{equation}\label{eq:auxEqn7} 
    \{ \gamma_t < u \} = \{ \tau(u) > \tau(t) \}
  \end{equation}
  since $\tau$ is non-decreasing and continuous (which implies $\tau(\gamma_t) = \tau(t)$). Let us denote by $(\mathcal{F}_t)_{t \geq 0}$ the usual augmentation (in the sense of \cite[Def.~II.67.3]{Rogers2000}) of $(\mathcal{F}_t^X)_{t \geq 0}$. Since $\tau$ is adapted, \eqref{eq:auxEqn7} implies $\{ \gamma_t < u \} \in \mathcal{F}_u$ for any $u \geq 0$ and so $\gamma_t$ is an $(\mathcal{F}_t)_{t \geq 0}$-stopping time. By right-continuity of $X$ and \cite[Lem.~II.67.10]{Rogers2000}, \eqref{eq:auxEqn2} is also a martingale with respect to $(\mathcal{F}_t)_{t \geq 0}$. Hence, we can apply optional sampling to the martingales~\eqref{eq:auxEqn2} to obtain for any $u > t$ and $f \in \mathcal{D}$ that
  \begin{align*}
    0 = \mathbb{E}\big[M^f_{\gamma_t \wedge u} - M^f_t \big|\mathcal{F}_t\big]
    & = \mathbb{E}\left[f(X_{\gamma_t \wedge u}) - \int_t^{\gamma_t \wedge u } \sigma(s_0+s,X_s)\mathcal{A}f(X_s) \dd s \bigg|\mathcal{F}_t\right] - f(X_t) \\ 
    & = \mathbb{E}\left[f(X_{\gamma_t \wedge u}) |\mathcal{F}_t\right] - f(X_t),
  \end{align*}
  where the last step follows from \eqref{eq:auxEqn8}. Since $f \in \mathcal{D}$ was arbitrary and by Assumption~\ref{ass:FellerProcess}~(i) $\mathcal{D}$ is dense in $C_0(E)$, this implies (see \cite[Chap.~3, Ex.~7]{Ethier1986a}) that for fixed $u>t$, $X_{\gamma_t \wedge u} = X_t$, $\tilde{\P}$-a.s. Thus we can find $\Omega_0 \in \tilde{\mathcal{F}}$ such that $\tilde{\P}(\Omega_0) = 1$ and on $\Omega_0$, we have  $X_{\gamma_t \wedge u} = X_t$ for all $u > t \geq 0$ with $t,u \in \mathbb{Q}$. But then on $\Omega_0$ this extends to all $u > t \geq 0$ by a standard argument: for $u > t \geq 0$, we find $\{u_n \} \subset \mathbb{Q}$, $\{t_n\} \subset \mathbb{Q}$ with $u_n \downarrow u$, $t_n \downarrow t$ as $n \to \infty$. Then $\gamma_{t_n} \wedge u_n \downarrow \gamma_t \wedge u$ as $n \to \infty$ and so we can use right-continuity of $X$ for the first and last equality and our choice of $\Omega_0$ for the second equality to obtain 
  \begin{equation}\label{eq:auxEqn9} 
    X_t = \lim_{n \to \infty} X_{t_n} = \lim_{n \to \infty} X_{\gamma_{t_n} \wedge u_n}= X_{\gamma_t \wedge u}.
  \end{equation}
  Thus, if $\omega \in \Omega_0$, $u > t \geq 0$ and $\int_t^u \sigma(s_0+s,X_s(\omega))\dd s = 0$, then $u \leq \gamma_t(\omega)$ and so by \eqref{eq:auxEqn9} indeed $X_t(\omega) = X_{\gamma_t(\omega)\wedge u} = X_u(\omega)$. 
  \smallskip

  To prove (ii), set $\mathcal{T}_{+}(u) := \lim_{v \downarrow u} \mathcal{T}(v)$ and notice $\mathcal{T}_+(u) = \inf\{t \geq 0 \,:\, \tau(t) > u\}$. Then from (i) we get that, $\tilde{\P}$-a.s., $X$ is constant on the interval $[\mathcal{T}(u),\mathcal{T}_{+}(u)]$ for all $u \geq 0$. Hence, from $\mathcal{T}(\tau(t)) \leq t \leq \mathcal{T}_{+}(\tau(t))$ we obtain
  \begin{equation*} 
    X_t = X_{\mathcal{T}(\tau(t))} = Y_{\tau(t)}
  \end{equation*}
  for all $t \geq 0$. Furthermore, $u \downarrow u_0$ implies $\mathcal{T}(u) \downarrow \mathcal{T}_+(u_0)$ and so by right-continuity of $X$ also 
  \begin{equation*}
    Y_u = X_{\mathcal{T}(u)} \to X_{\mathcal{T}_+(u_0)} = X_{\mathcal{T}(u_0)} = Y_{u_0}
  \end{equation*}
  and since $\mathcal{T}$ is left-continuous and $X$ has left-limits, the same reasoning shows that $Y$ also has left-limits. Hence, $Y$ is indeed RCLL.
  \smallskip
  
  For (iii), notice that $\sigma(s_0+s,X_s) = 0$ for $s \in [\mathcal{T}(\tau(\thor-s_0)),\thor-s_0]$ and $s \in [\mathcal{T}(v),\mathcal{T}_+(v)]$ for any $v \geq 0$. Combining this with $ \mathcal{T}(u) \leq t \Leftrightarrow \tau(t) \geq u$, one obtains 
  \begin{align}\label{eq:auxEqn11}
  \begin{split}
    \int_0^{\mathcal{T}(u)\wedge (\thor-s_0)} \sigma(s_0 + s, X_s )\mathcal{A}f(X_s) \dd s & =\int_0^{\mathcal{T}(u\wedge \tau(\thor-s_0))} \sigma(s_0 + s, X_s )\mathcal{A}f(X_s) \dd s \\ & =  \int_0^{\mathcal{T}_+(u\wedge \tau(\thor-s_0))} \sigma(s_0 + s, X_s )\mathcal{A}f(X_s) \dd s, 
    \end{split}
  \end{align}
  for $u \geq 0$, $f \in \mathcal{D}$.
  Since  $\mathcal{T}_+(v) = \inf\{t \geq 0 \,:\, \tau(t) > u\}$, a change of variables as in \cite[Chap.~6, Ex.~12]{Ethier1986a} now allows to rewrite the last expression as  
  \begin{equation}\label{eq:auxEqn10} 
    \int_0^{\mathcal{T}_+(u \wedge \tau(\thor-s_0))} \sigma(s_0+s,X_s)\mathcal{A}f(X_s) \dd s = \int_0^{u \wedge \tau(\thor-s_0)} \mathcal{A}f(X_{\mathcal{T}_+(v)}) \dd v.
  \end{equation}  
  As argued in (ii), $X_{\mathcal{T}_+(v)} = Y_v$ for all $v \geq 0$ and by inserting this in the right-hand side of~\eqref{eq:auxEqn10} and combining with \eqref{eq:auxEqn11}, we get
  \begin{equation}\label{eq:auxEq12}
    \int_0^{\mathcal{T}(u)\wedge (\thor-s_0)} \sigma(s_0 + s, X_s )\mathcal{A}f(X_s) \dd s  = \int_0^{u\wedge \tau(\thor-s_0)} \mathcal{A}f(Y_s) \dd s 
  \end{equation}
  for any $u \geq 0$, $f \in \mathcal{D}$. Furthermore, for any $t,u\geq 0$, $\{ \mathcal{T}(u) \leq t \} = \{ \tau(t) \geq u \} \in \mathcal{F}_t$ so that for each $u \geq 0$, $\mathcal{T}(u)$ is a stopping time.  Using this, \eqref{eq:auxEq12} and applying the optional sampling theorem to the martingales in \eqref{eq:auxEqn2}, we therefore get that for any $f \in \mathcal{D}$ the process
  \begin{equation}\label{eq:auxEqn3} 
    M^f_{\mathcal{T}(u) \wedge (\thor-s_0)} = f(Y_{u \wedge \tau(\thor-s_0)}) - f(Y_0) - \int_0^{u \wedge \tau(\thor-s_0)} \mathcal{A}f(Y_s) \dd s, \quad u \geq 0,
  \end{equation}
  is a martingale with respect to the filtration $(\mathcal{F}_{\mathcal{T}(u)\wedge \thor- s_0})_{u \geq 0}$ and thus also with respect to the filtration generated by $Y_{\cdot \wedge \tau(\thor-s_0)}$. Let us denote by $W$ the coordinate process on $D_E[0,\infty)$ and for $(\omega,\omega') \in \tilde{\Omega} \times D_E[0,\infty)$ define
  \begin{equation*}
    \tilde{Y}_u(\omega,\omega') := \begin{cases} 
            Y_{u}(\omega) &\mbox{for } u < \tau(\thor-s_0)(\omega), \\
            W_{u - \tau(\thor-s_0)(\omega)}(\omega') & \mbox{for } u \geq \tau(\thor-s_0)(\omega). 
          \end{cases}
  \end{equation*} 
  From \eqref{eq:auxEqn3} and Lemma~\ref{lem:gluingLemma} below (applied to the process $(Y_{u \wedge \tau(\thor-s_0)})_{u \geq 0}$ and the random variable $\tau(\thor-s_0)$) it follows that the process $\tilde{Y}$ is a solution to the RCLL-martingale problem for $(\mathcal{A},\mu_0)$ under a measure $Q$ with $Q(A\times D_E[0,\infty)) = \tilde{\P}(A)$ for all $A \in \tilde{\mathcal{F}}$ and such that $\tilde{Y}_{s \wedge \tau(\thor-s_0)}= Y_{s \wedge \tau(\thor-s_0)}$ for all $s \geq 0$, $Q$-a.s. Combining this with (ii) and $\tau(\cdot) \leq \tau(\thor-s_0)$, it follows that
  \begin{equation*} 
    X_t = Y_{\tau(t)} = Y_{\tau(t)\wedge \tau(\thor-s_0)} = \tilde{Y}_{\tau(t) \wedge \tau(\thor-s_0)} = \tilde{Y}_{\tau(t)} = \tilde{Y}_{\int_0^t \sigma_{s_0}(s,X_s) \dd s}, \quad t \geq 0, \,Q\text{-a.s.,}
  \end{equation*}
  where $\sigma_{s_0}(s,x):=\sigma(s+s_0,x)$ for $s \geq 0, x \in E$. In particular, $X$ satisfies a time-change equation~\eqref{eq:timeChangeGeneral} (with $M$ replaced by $\tilde{Y}$ and $\sigma$ replaced by $\sigma_{s_0}$). By our assumptions on $\sigma$, $\sigma_{s_0}$ satisfies Assumption~\ref{ass:sigma} on $[0,t_0-s_0]$ (and $\sigma_{s_0}(t,\cdot)=0$ for $t > t_0-s_0$) and \ref{ass:SigmaBounded}. Since uniqueness holds for the RCLL-martingale problem for $(\mathcal{A},\mu_0)$, the law on $D_E[0,\infty)$ of $\tilde{Y}$ under~$Q$ is given as~$P_{x_0}$ and thus, by Assumption~\ref{ass:Hregular}, $H$ is regular for $Q$. Altogether, Corollary~\ref{cor:MeasurabilityofTimeChange} can be applied to $\sigma_{s_0}$ and $\tilde{Y}$, which implies that the law of $X$ under~$Q$ is uniquely determined by~$\sigma_{s_0}$ and~$P_{x_0}$. But the law of~$X$ under~$\tilde \P$ is the same as under~$Q$ and so the claim follows.
\end{proof}

For the well-posedness of the RCLL-martingale problem (see Proposition~\ref{prop:uniquenessBoundedSigma}), we used the following auxiliary lemma. As the authors are not aware of a suitable reference, we also present its complete proof here.

\begin{lemma}\label{lem:gluingLemma}
  Let $(E,d)$ be a locally compact, complete, separable metric space, $\mathcal{D} \subset C_0(E)$ and $\mathcal{A}\colon\mathcal{D} \to C_0(E)$ linear. Suppose that the $D_E[0,\infty)$-martingale problem for $(\mathcal{A},\mu)$ is well-posed for any $\mu \in \mathcal{P}(E)$. Let $(\Omega,\mathcal{F},\mathbb{P})$ be a probability space, $\tau$ be a $[0,\infty)$-valued random variable, and $Z$ be an $E$-valued RCLL process on $(\Omega,\mathcal{F},\mathbb{P})$ with $Z_u = Z_{u \wedge \tau}$ for all $u \geq 0$, such that 
  \begin{equation*} 
    f(Z_u) - f(Z_0) - \int_0^{u \wedge \tau} \mathcal{A}f(Z_s) \dd s, \quad u \geq 0,
  \end{equation*}
  is a martingale for all $f \in \mathcal{D}$. Let us denote by $X$ the coordinate process on $D_E[0,\infty)$. On $\Omega':=\Omega \times D_E[0,\infty)$ define the process $Y$ via  
  \begin{equation*}
    Y_t(\omega,\omega') := \begin{cases} 
            Z_t(\omega) &\mbox{for } t <\tau(\omega) \\
            X_{t - \tau(\omega)}(\omega') & \mbox{for } t \geq \tau(\omega)
          \end{cases}
  \end{equation*}
  for $(\omega,\omega') \in \Omega'$ and $t \geq 0$. Furthermore, for each $x \in E$, denote by $P_x$ the law of the solution of the RCLL-martingale problem for $(\mathcal{A},\delta_x)$ and by $\mathcal{S}_E$ the Borel $\sigma$-algebra in $D_E[0,\infty)$. Let us define the measure $Q$ on $\mathcal{F} \times \mathcal{S}_E$ by 
  \begin{equation}\label{eq:Qdef}
    Q(A \times C) := \mathbb{E}[\mathbbm{1}_A P_{Z_\tau}(C)]
  \end{equation}  
  for $A \in \mathcal{F}$, $C \in \mathcal{S}_E$ (and extend this to the product $\sigma$-algebra). Then under~$Q$, $Y$ is a solution to the RCLL-martingale problem for $(\mathcal{A},\mu_0)$, where $\mu_0$ is the law of $Z_0$. Furthermore, $Q(A\times D_E[0,\infty)) = \mathbb{P}(A)$ for all $A \in \mathcal{F}$ and $Z_t = Y_{t \wedge \tau}$ for all $t \geq 0$, $Q$-a.s. 
\end{lemma}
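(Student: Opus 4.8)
My plan is to prove the four assertions in turn, treating the martingale property of $Y$ as the main point and the rest as bookkeeping. First I would dispatch the routine claims. Since each $P_x$ is a probability measure, $Q(A\times D_E[0,\infty)) = \E[\mathbbm{1}_A P_{Z_\tau}(D_E[0,\infty))] = \P(A)$, so the first marginal of $Q$ is $\P$. As $P_x$ is carried by paths started at $x$, one has $X_0 = Z_\tau$ $Q$-a.s.; together with $Z_u = Z_{u\wedge\tau}$ this yields $Z_t = Y_{t\wedge\tau}$ $Q$-a.s., and it also shows $Y$ is RCLL, the only delicate point being the gluing time, where $\lim_{t\downarrow\tau}Y_t = X_0 = Z_\tau = Y_\tau$ and $\lim_{t\uparrow\tau}Y_t = Z_{\tau-}$ exists by the RCLL property of $Z$. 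Setting $t=0$ gives $Y_0 = Z_0$, so $Q\circ Y_0^{-1} = \mu_0$. Throughout one needs $x\mapsto P_x$ to be measurable --- which I would take from well-posedness, the unique solution depending measurably on its starting point --- both to make $Q$ well-defined and to justify the conditioning below.

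For the martingale property, fix $f\in\mathcal{D}$ and set $\Phi_r := f(X_r) - f(X_0) - \int_0^r \mathcal{A}f(X_u)\dd u$ (the martingale of the coordinate process) and $L_u := f(Z_u) - f(Z_0) - \int_0^{u\wedge\tau}\mathcal{A}f(Z_s)\dd s$ (the given $Z$-martingale, which is already stopped at $\tau$ because $Z$ is frozen there). The identity driving everything is
\[
  f(Y_t) - f(Y_0) - \int_0^t \mathcal{A}f(Y_s)\dd s = L_t + \Phi_{(t-\tau)^+}, \qquad t \ge 0,
\]
verified by splitting at $t\wedge\tau$ and using $Y_s = Z_s$ for $s\le\tau$, $Y_s = X_{s-\tau}$ for $s\ge\tau$, and the substitution $r = s-\tau$. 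Calling the left-hand side $N^f_t$, it then suffices to prove, for $0\le s\le t$ and every $A\in\mathcal{F}^Y_s$, the two identities $\E_Q[(\Phi_{(t-\tau)^+}-\Phi_{(s-\tau)^+})\mathbbm{1}_A]=0$ and $\E_Q[(L_t-L_s)\mathbbm{1}_A]=0$; adding them gives $\E_Q[(N^f_t-N^f_s)\mathbbm{1}_A]=0$.

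Both identities follow by conditioning on $\mathcal{F}$, under which the coordinate process is $P_{Z_\tau}$-distributed. The key structural fact is that for $A\in\mathcal{F}^Y_s$ the $\omega$-section $A^\omega$ is $\sigma(X_r : r\le(s-\tau(\omega))^+)$-measurable, since $Y_u$ with $u\le s$ sees the coordinate path only up to time $(s-\tau)^+$. For the first identity I would then apply the $P_{Z_\tau}$-martingale property of $\Phi$ and optional sampling at the conditionally deterministic times $(s-\tau)^+\le(t-\tau)^+$. For the second, I would check that the weight $\psi(\omega):=P_{Z_\tau(\omega)}(A^\omega)$ is $\mathcal{F}_{s\wedge\tau}$-measurable --- on $\{\tau>s\}$ the section carries no genuine coordinate-path dependence and $\psi$ reduces to an $\mathcal{F}^Z_s$-indicator, while on $\{\tau\le s\}$ it depends on $Z$ only through $(Z_v)_{v\le\tau}$ and $Z_\tau$ --- and then invoke optional sampling for $L$ at $s\wedge\tau\le t\wedge\tau$.

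I expect the filtration bookkeeping across the random time $\tau$ to be the main obstacle. The temptation is to enlarge $\mathcal{F}^Y_s$, but this is exactly what must be avoided: on $\{\tau>s\}$ the identification $X_0 = Z_\tau$ would then leak the non-adapted value $Z_\tau$ into the conditioning and destroy the $L$-increment. The honest route is to track the section $A^\omega$ and the weight $\psi$ directly, separating $\{\tau>s\}$ and $\{\tau\le s\}$. I also use implicitly that $\tau$ is a stopping time of the (augmented) filtration for which the hypotheses hold, as is the case in the application, so that optional sampling for $L$ is available.
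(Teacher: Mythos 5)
Your route is genuinely different from the paper's: the paper never verifies the martingale property of $Y$ by hand, but instead transports everything to the canonical space $D_E[0,\infty)\times[0,\infty)\times D_E[0,\infty)$, invokes \cite[Chap.~4, Lem.~5.16]{Ethier1986a} there, and then shows that the law of $Y$ under $Q$ coincides with the law of the canonically glued process under the transported measure (via finite-dimensional distributions, plus a separate countable-cover argument for $Z_\tau=Y_\tau$). Your direct verification is self-contained and, in most respects, sound: the decomposition $f(Y_t)-f(Y_0)-\int_0^t\mathcal{A}f(Y_s)\dd s = L_t+\Phi_{(t-\tau)^+}$ is correct (it uses $X_0=Z_\tau$ $Q$-a.s.\ and that $L$ is frozen at $\tau$), the disintegration of $Q$ over the first coordinate is legitimate once $x\mapsto P_x(C)$ is known to be measurable (the paper cites \cite[Chap.~4, Thm.~4.6]{Ethier1986a} for this), the section fact $A^\omega\in\sigma(X_r\,:\,r\le(s-\tau(\omega))^+)$ for $A\in\mathcal{F}_s^Y$ is right, and your treatment of the $\Phi$-increment needs nothing beyond the $P_{Z_\tau(\omega)}$-martingale property of $\Phi$ at the $\omega$-wise deterministic times $(s-\tau(\omega))^+\le(t-\tau(\omega))^+$.

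The gap is in the $L$-increment. You close it by optional sampling at $s\wedge\tau\le t\wedge\tau$ and concede at the end that this ``uses implicitly that $\tau$ is a stopping time of the (augmented) filtration for which the hypotheses hold.'' That is an extra hypothesis: in the lemma $\tau$ is an arbitrary $[0,\infty)$-valued random variable, no filtration carrying the events $\{\tau\le u\}$ is part of the data, and the martingale assumption on $L_u=f(Z_u)-f(Z_0)-\int_0^{u\wedge\tau}\mathcal{A}f(Z_s)\dd s$ is, by the paper's standing convention, with respect to $(\mathcal{F}^Z_u)_{u\ge0}$ only (this weak reading is exactly what the application in Proposition~\ref{prop:uniquenessBoundedSigma} supplies). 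So, as written, your argument proves a strictly weaker statement. The assumption is in fact unnecessary, and the repair stays inside your own framework: since $Z_u=Z_{u\wedge\tau}$ and the integral in $L$ is capped at $\tau$, the increment $L_t-L_s$ vanishes identically on $\{\tau\le s\}$, whence
\begin{equation*}
  \E_Q\big[(L_t-L_s)\mathbbm{1}_A\big]
  =\E\big[(L_t-L_s)\,\psi\big]
  =\E\big[(L_t-L_s)\,\psi\,\mathbbm{1}_{\{\tau> s\}}\big],
\end{equation*}
and on $\{\tau>s\}$ your own observation applies: the sections of $A$ are trivial there, so $\psi\,\mathbbm{1}_{\{\tau>s\}}=\mathbbm{1}_{C\cap\{\tau>s\}}$ for some $C\in\mathcal{F}^Z_s$. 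Using the freezing once more, $\E[(L_t-L_s)\mathbbm{1}_C\mathbbm{1}_{\{\tau>s\}}]=\E[(L_t-L_s)\mathbbm{1}_C]=0$ by the plain $(\mathcal{F}^Z_u)$-martingale property of $L$. In other words, the behaviour of $\psi$ on $\{\tau\le s\}$ --- the only place where the value of $\tau$ and $Z_\tau$ leak in, and the reason you reached for stopping times and optional sampling --- is multiplied by a vanishing increment and never needs to be measurable with respect to anything. With this substitution your proof is complete and establishes the lemma in its stated generality.
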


\begin{proof}
  Essentially this is \cite[Chap.~4, Lem.~5.16]{Ethier1986a}, the only difference is that we construct~$Y$ on~$\Omega \times D_E[0,\infty)$ (instead of $D_E[0,\infty) \times [0,\infty) \times D_E[0,\infty)$). 
 
  To prove the claim, first notice that by \cite[Chap.~4, Thm.~4.6]{Ethier1986a}, the map $x \mapsto P_x(C)$ is measurable for each $C \in \mathcal{S}_E$ and so $Q$ is indeed well-defined. Furthermore, denoting by $\mu$ the law of $Z_\tau$, also the measure $\tilde{Q}$ defined on product sets as  
  \[
    \tilde{Q}(B \times C) := \int_E \mathbb{E}[\mathbbm{1}_B(Z,\tau)|Z_\tau=x] P_{x}(C)\,\mu(\d x),
  \] 
  for $B \in \mathcal{S}_E \times \mathcal{B}([0,\infty))$, $C \in \mathcal{S}_E$ is well-defined. Therefore, denoting by $(X^{(1)},\eta,X^{(2)})$ the coordinate random variable on $D_E[0,\infty) \times [0,\infty) \times D_E[0,\infty)$, from \cite[Chap.~4, Lem.~5.16, (5.52) and (5.53)]{Ethier1986a} it follows that under $\tilde{Q}$ the process $(\tilde{Y}_t)_{t \geq 0}$ defined  as 
  \begin{equation*}
    \tilde{Y}_t := \begin{cases} 
            X^{(1)}_t &\mbox{for } t < \eta \\
            X^{(2)}_{t-\eta} & \mbox{for } t \geq \eta
          \end{cases}
  \end{equation*}
  is a solution to the RCLL-martingale problem for $(\mathcal{A},\mu_0)$. Thus, it remains to show that the law of~$Y$ under~$Q$ is the same as the law of~$\tilde{Y}$ under~$\tilde{Q}$. 
 
  Firstly note that for $B \in \mathcal{S}_E \times \mathcal{B}([0,\infty))$ and $C \in \mathcal{S}_E$ one obtains
  \begin{equation*}\begin{aligned}
    Q(\{(Z,\tau) \in B\} \times \{X \in C \}) & = \mathbb{E}[\mathbbm{1}_{\{(Z,\tau) \in B\}} P_{Z_\tau}(C)]
    = \mathbb{E}[\mathbb{E}[\mathbbm{1}_B(Z,\tau)|Z_\tau] \mathbb{E}[P_{Z_\tau}(C)|Z_\tau]] 
    \\ & = \tilde{Q}(B \times C),
  \end{aligned}\end{equation*}
  where the last step uses \cite[Chap.~4, Thm.~4.2~(c)]{Ethier1986a}. Hence, $\tilde{Q}$ coincides with the law of $(Z,\tau,X)$ on $D_E[0,\infty)\times [0,\infty) \times D_E[0,\infty)$ under~$Q$. Recalling that $(X^{(1)},\eta,X^{(2)})$ denotes the coordinate random variable on $D_E[0,\infty)\times [0,\infty) \times D_E[0,\infty)$, this implies in particular that for any $0 \leq t_1 < \cdots < t_n$, $n \in \mathbb{N}$, $A_1,\cdots, A_n \in \mathcal{B}(E)$, 
  \begin{align*} 
    Q(Y_{t_1} \in A_1, \dots, Y_{t_n} \in A_n) & = \sum_{I \subset \{1,\dots,n \} } Q(Z_{t_i} \in A_i \, , t_i < \tau  \, \forall i \in I , X_{t_j - \tau} \in A_j , t_j \geq \tau \, \forall j \in I^c ) 
    \\ & = \sum_{I \subset \{1,\dots,n \} } \tilde{Q}(X^{(1)}_{t_i} \in A_i \, , t_i < \eta  \, \forall i \in I , X^{(2)}_{t_j - \eta} \in A_j , t_j \geq \eta \, \forall j \in I^c ) 
    \\ & = \tilde{Q}(\tilde{Y}_{t_1} \in A_1, \dots, \tilde{Y}_{t_n} \in A_n).
  \end{align*}
Thus, it follows that the law of $\tilde{Y}$ under $\tilde{Q}$ is the same as the law of~$Y$ under~$Q$, hence the claim.

  Finally, $Y_u = Z_u$ for $u < \tau$ and so we only need to show $Z_\tau = Y_\tau$, $Q$-a.s. This should be clear but we still give a formal argument: For any $A, B \in \mathcal{B}(E)$ we have, by \eqref{eq:Qdef} and $P_x({X_0 \in B}) = \delta_x(B)$,
  \begin{equation}\label{eq:auxEq13}
    Q(\{Z_\tau \in A\} \times \{X_0 \in B\}) = \mathbb{E}[\mathbbm{1}_A(Z_\tau) P_{Z_\tau}(X_0 \in B)] = \mathbb{P}(Z_\tau \in A \cap B). 
  \end{equation}
  Now denote by $\{x_m \}_{m \in \mathbb{N}}$ a countable dense subset of $E$. Then, for any $n \in \mathbb{N}$, $ m \in \mathbb{N}\setminus{\{n\}}$ and  $k \in \mathbb{N}$ with $d(x_n,x_m)>2/k$, from~\eqref{eq:auxEq13} we have
  \[
    Q(\{Z_\tau \in B_{1/k}(x_n) \} \cap \{X_0 \in B_{1/k}(x_m) \}) = \mathbb{P}(Z_\tau \in B_{1/k}(x_n)\cap B_{1/k}(x_m)) = \mathbb{P}(\emptyset) = 0. 
  \]
  Writing
  \begin{equation*}
    \{Z_\tau \neq Y_\tau \} = \{Z_\tau \neq X_0 \} = \bigcup_{n \in \mathbb{N}} \bigcup_{m \in \mathbb{N}\setminus{\{n\}}} \bigcup_{k \in \mathbb{N}\,:\, d(x_n,x_m)>2/k} \{Z_\tau \in B_{1/k}(x_n) \} \cap \{X_0 \in B_{1/k}(x_m) \},
  \end{equation*}
  we see that $\{Z_\tau \neq Y_\tau \}$ is a countable union of $Q$-null sets and so the claim follows.
\end{proof}

Finally, Proposition~\ref{prop:uniquenessBoundedSigma} is extended in two directions: Firstly, we localize to prove uniqueness for the RCLL-martingale problem when $\sigma$ is unbounded, and secondly, we show that any progressively measurable (but not necessarily RCLL) solution to the martingale problem has an RCLL modification. Note that the last statement is not true in general (as discussed in \cite{Bhatt2003}), but it has to be established to apply Theorem~\ref{thm:KurtzResult}. 
\smallskip

In the proof, the notion of a \textit{stopped martingale problem} is used: Let $(F,d)$ be a complete metric space, $(D(\mathcal{L}),\mathcal{L})$ an operator on $C_b(F)$ and $U$ open in $F$. If $X$ is an $F$-valued, RCLL process, then $\tau := \inf \{t\geq 0 \, : \, X_t \notin U \text{ or } X_{t-} \notin U\}$ is an $(\mathcal{F}_t^X)_{t \geq 0}$-stopping time by \cite[Chap.~2, Prop.~1.5~a)]{Ethier1986a}. $X$ is called a solution to the stopped martingale problem for $(\mathcal{L},\nu_0,U)$ if $X_0 \sim \nu_0$, $X^\tau = X$ and
\begin{equation*} 
  h(X_{\tau \wedge t}) - h(X_0) - \int_0^{\tau \wedge t} \mathcal{L} h (X_s) \dd s,  \quad t \geq 0, 
\end{equation*}
is an $(\mathcal{F}^X_t)_{t\geq 0}$-martingale for any $h \in D(\mathcal{L})$.

\begin{proposition}\label{prop:UniquenessUnbounded}
  Suppose $\sigma$ and $(\mathcal{D},\mathcal{A})$ are as in Theorem~\ref{thm:KolmogorovUniqueness} and $(D(\mathcal{L}),\mathcal{L})$ is defined as in \eqref{eq:InhomogeneousGenerator}. Then for any $\mu_0 \in \mathcal{P}(E)$ the martingale problem for $(\mathcal{L},\delta_{0} \otimes \mu_0)$ is well-posed.
\end{proposition}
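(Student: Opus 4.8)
The plan is to reduce everything to the bounded-coefficient case already settled in Proposition~\ref{prop:uniquenessBoundedSigma}. As in that proof it suffices to treat point-mass initial data: once well-posedness of the martingale problem for $(\mathcal{L},\delta_0\otimes\delta_{x_0})$ is known for every $x_0\in E$, the passage to a general $\mu_0$ follows by mixing via \cite[Thm.~2.1]{Bhatt1993} together with Lemma~\ref{lem:BProperties}(v), exactly as in Proposition~\ref{prop:uniquenessBoundedSigma}. Existence of an RCLL (hence progressively measurable) solution for $\delta_0\otimes\delta_{x_0}$ is precisely Proposition~\ref{prop:ExistenceUnbounded}, so the whole statement reduces to \emph{uniqueness} of the martingale problem for $(\mathcal{L},\delta_0\otimes\delta_{x_0})$. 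I would split this into (a) uniqueness among RCLL solutions and (b) the claim that every progressively measurable solution admits an RCLL modification.

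For (a) I would localize to remove the unboundedness of $\sigma$. Fix an exhaustion $G_1\subset\overline{G_1}\subset G_2\subset\cdots$ of $E$ by relatively compact open sets and put $U_n:=[0,\infty)\times G_n$. On $\overline{U_n}$ the coefficient $\sigma$ is bounded: for $t\le\thor$ this is Assumption~\ref{ass:sigma}(ii) together with boundedness of $H$ on compacts (Assumption~\ref{ass:Recurrence}(i)), while $\sigma\equiv0$ for $t>\thor$. I then replace $\sigma$ by the globally bounded coefficient $\sigma^{(n)}:=H^{(n)}\tilde\sigma^{(n)}$ with $H^{(n)}:=\min(H,d_n)$ and $\tilde\sigma^{(n)}:=\min(\tilde\sigma,c_n)$, where $d_n\ge\sup_{\overline{G_n}}H$ and $c_n\ge\sup_{[0,\thor]\times\overline{G_n}}\tilde\sigma$, so that $\sigma^{(n)}=\sigma$ on $\overline{U_n}$. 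The truncations preserve Assumption~\ref{ass:sigma} (a minimum of Lipschitz-in-$t$ functions stays Lipschitz, and the two-sided local bounds survive), and, crucially, $H^{(n)}=\min(H,d_n)$ is still regular for every $P_x$: it has the same zero set as $H$, and $(H^{(n)})^{-1}=\max(H^{-1},d_n^{-1})$ has exactly the same non-integrable blow-up near those zeros, so Definition~\ref{def:regular} transfers and Assumption~\ref{ass:Hregular} holds for $\sigma^{(n)}$. Hence Proposition~\ref{prop:uniquenessBoundedSigma} applies to the operator $\mathcal{L}^{(n)}$ built from $\sigma^{(n)}$. Since $\mathcal{L}=\mathcal{L}^{(n)}$ on $U_n$, any two RCLL solutions of the $\mathcal{L}$-problem, stopped at the exit time $\tau_n$ from $U_n$, solve the same stopped martingale problem for $(\mathcal{L}^{(n)},\delta_{(0,x_0)},U_n)$ and therefore agree in law; letting $n\to\infty$ and using $\tau_n\uparrow\infty$ (see below) yields uniqueness among RCLL solutions, along the lines of the localization theorem \cite[Chap.~4, Thm.~6.1]{Ethier1986a}.

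For (b), fix any progressively measurable solution $(T,X)$; the time component is again pinned down to $T_t=t$. Since $\mathcal{L}h$ is bounded (Assumption~\ref{ass:Recurrence}(ii) or \ref{ass:SigmaBounded}), for each $h\in D(\mathcal{L})$ the process $h(T_t,X_t)-h(T_0,X_0)-\int_0^t\mathcal{L}h(T_s,X_s)\dd s$ is a martingale for the usual augmentation of the generated filtration and hence has an RCLL modification; as the integral term is absolutely continuous in $t$, so is $t\mapsto h(T_t,X_t)$. Applying this to a countable point-separating subfamily of $D(\mathcal{L})$ (which exists since $D(\mathcal{L})$ is a separable, point-separating algebra), embedding $E_0$ through it into the one-point compactification, and using non-explosion to remain inside $E_0$, one assembles an RCLL modification $(\tilde T,\tilde X)$ of $(T,X)$. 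This modification is again a solution, so by (a) its law is the unique one; consequently $(T,X)$ has the same finite-dimensional distributions, which is the desired uniqueness.

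The main obstacle is the behaviour at infinity underlying both $\tau_n\uparrow\infty$ in (a) and the existence of a genuinely $E_0$-valued RCLL modification in (b) — the latter failing in general, cf.~\cite{Bhatt2003}. Both are governed by non-explosion of the $E$-component. Since the law of the stopped pair $(\tau_n,X^{\tau_n})$ is common to all solutions (being determined by the uniquely solvable stopped problem on $U_n$), it suffices to verify $\tau_n\uparrow\infty$ for the single explicitly constructed solution of Proposition~\ref{prop:ExistenceUnbounded}. For that solution $X=M_{\tau(\cdot)}$ is a time-change of the non-exploding process $M$ with finite total clock $\tau(\thor)<\infty$ (Lemma~\ref{lem:timechange}), hence stays in a random compact set and never approaches the point at infinity in finite time; the same fact keeps $(\tilde T,\tilde X)$ inside $E_0$ in (b). I expect transferring this non-explosion from the constructed solution to an arbitrary one, and checking that the compactification argument really produces a bona fide $E_0$-valued RCLL process, to be the most delicate part of the argument.
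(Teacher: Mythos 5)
Your overall strategy matches the paper's: reduce to Dirac initial laws via \cite[Thm.~2.1]{Bhatt1993} and Lemma~\ref{lem:BProperties}~(v), truncate the coefficient to $\sigma^{(n)}=\min(H,d_n)\min(\tilde\sigma,c_n)$ so that Proposition~\ref{prop:uniquenessBoundedSigma} and the stopped-martingale-problem theory \cite[Chap.~4, Thm.~6.1]{Ethier1986a} apply on $U_n$, and transfer non-explosion from the explicitly constructed solution of Proposition~\ref{prop:ExistenceUnbounded} to an arbitrary one. Your check that the truncation preserves Assumptions~\ref{ass:sigma} and~\ref{ass:Hregular} is correct and in fact more detailed than the paper's. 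Part (a), uniqueness among $E_0$-valued RCLL solutions, goes through as you describe, since such solutions automatically satisfy $\tau_n\uparrow\infty$.

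However, there is a genuine gap at exactly the point you defer. For a progressively measurable solution, the RCLL modification produced by your compactification argument (this is \cite[Chap.~4, Cor.~3.7]{Ethier1986a}, which the paper simply cites) takes values in $E_0^\Delta$, not in $E_0$. Consequently your key assertion --- that ``the law of the stopped pair $(\tau_n,X^{\tau_n})$ is common to all solutions, being determined by the uniquely solvable stopped problem on $U_n$'' --- is circular: uniqueness for the stopped martingale problem on $U_n$ applies only to $E_0$-valued RCLL processes, and to know that the stopped modification (call it $\tilde Z^{\tilde\tau_n}$) is such a process you must first rule out that $\tilde Z_{\tilde\tau_n}=\Delta$, i.e.\ that the path jumps to the cemetery point at the exit time; this is precisely the pathology of \cite{Bhatt2003} that you cite. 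The paper closes this gap with an idea your proposal lacks: since $\sigma_n$ is bounded, $(D(\mathcal{L}^n),\mathcal{L}^n)$ is conservative by Lemma~\ref{lem:BProperties}~(i), so there exist $h_k\in D(\mathcal{L})$ with $\bplim_{k\to\infty} h_k=1$ and $\bplim_{k\to\infty}\mathcal{L}^n h_k=0$; inserting these (extended by $h_k(\Delta):=0$) into the optional-sampling identity
\begin{equation*}
  \E\big[h(\tilde Z_{\tilde\tau_n\wedge t})\big]=\E\big[h(\tilde Z_0)\big]+\E\left[\int_0^{\tilde\tau_n\wedge t}\mathcal{L}^n h(\tilde Z_s)\dd s\right]
\end{equation*}
and letting $k\to\infty$ yields $\P(\tilde Z_{\tilde\tau_n\wedge t}\in E_0)=1$ for all $t$. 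Only after this does the identification of the stopped laws, and hence your transfer of $\tau_n\uparrow\infty$ from the constructed solution, become legitimate. Note that this is a second, independent use of the truncation: boundedness of $\sigma_n$ is needed not only for Proposition~\ref{prop:uniquenessBoundedSigma} but also to make $\mathcal{L}^n$ conservative --- conservativeness of $\mathcal{L}$ itself is not guaranteed when $\sigma$ is unbounded, since $\bplim_{k\to\infty}\mathcal{A}f_k=0$ does not imply that $\sigma\mathcal{A}f_k$ is uniformly bounded. Without this step your argument does not close.
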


\begin{proof} 
  Suppose first $\mu_0 = \delta_{x_0}$ for some $x_0 \in E$. By Proposition~\ref{prop:ExistenceUnbounded} and our assumptions, there exists a solution $Z$ to the RCLL-martingale problem for $(\mathcal{L},\delta_{0} \otimes \mu_0)$. Therefore, it suffices to show that if $\tilde{Z}$ is any (progressively measurable) solution to the martingale problem for $(\mathcal{L},\delta_{0} \otimes \mu_0)$, then $\tilde{Z}$ has the same finite-dimensional marginal distributions as $Z$. In order to prove this, we proceed as follows:
  
  To start, by \cite[Chap.~4, Cor.~3.7]{Ethier1986a} and since $D(\mathcal{L})$ is dense in $C_0(\mathcal{L})$,  $\tilde{Z}$ has a modification (which we also denote by $\tilde{Z}$) with sample paths in $D_{E_0^\Delta}[0,\infty)$, where $E_0^\Delta$ is the one-point compactification of $E_0:=[0,\infty) \times E$. Furthermore, \eqref{eq:martingaleProblemDef} remains vaild for $\tilde{Z}$ and all $h \in D(\mathcal{L})$, where we extend $h$ to $C(E_0^\Delta)$ by $h(\Delta):=0$. By assumption on $E$, there exists $\{V_n\}_{n \in \mathbb{N}} \subset E$ such that for any $n$, $V_n$ is open, $\overline{V_n}$ is compact and $\cup_n V_n = E$. Define $U_n := [0,\infty) \times V_n$, $\tau_n := \inf \{t\geq 0 \, : \, Z_t \notin U_n \text{ or } Z_{t-} \notin U_n\}$ and $\tilde{\tau}_n =:\inf \{t\geq 0 \, : \, \tilde{Z}_t \notin U_n \text{ or } \tilde{Z}_{t-} \notin U_n\} $. Since~$U_n$ is open in~$E_0$, it is also open in $E_0^\Delta$ and thus $\tau_n$ is an $(\mathcal{F}^Z_t)_{t\geq 0}$-stopping time and $\tilde{\tau}_n$ is an $(\mathcal{F}^{\tilde{Z}}_t)_{t\geq 0}$-stopping time.
  
  Suppose Assumption~\ref{ass:Recurrence} holds. Then for any $n \in \mathbb{N}$ there exists $C_n > 0$ such that $|H(x)|\leq C_n$ and $|\tilde{\sigma}(t,x)|\leq C_n$ for all $(t,x) \in U_n$. Set  $\tilde{\sigma}_n(t,x) := \min(\tilde{\sigma}(t,x),C_n)$, $H_n(x) :=\min(H(x),C_n)$ and $\sigma_n := H_n \tilde{\sigma}_n$. In the other case, i.e. if Assumption~\ref{ass:SigmaBounded} holds, set $\sigma_n:= \sigma$. Then, in both cases, $\sigma_n$ is bounded and coincides with $\sigma$ in $U_n$. Define
  \begin{equation*}
    \mathcal{L}^n(f\gamma)(t,x) := \gamma(t)\sigma_n(t,x)\mathcal{A}f(x) + f(x) \gamma'(t), \quad t \in [0,\infty) , \, x \in E,
  \end{equation*}
  for $f \in \mathcal{D}$ and $\gamma \in C_c^1[0,\infty)$ and linearly extended to $D(\mathcal{L}^n) := \text{span}\{f \gamma \, : \, f \in \mathcal{D}, \gamma \in C_c^1[0,\infty) \}$ (and thus $D(\mathcal{L}^n) = D(\mathcal{L}))$.
  
  We now claim that:
  \begin{itemize}
    \item[(i)] $Z^{\tau_n}$ is a solution to the stopped martingale problem for $(\mathcal{L}^n,\delta_{0} \otimes \mu_0,U_n)$ and this solution is unique in law,
    \item[(ii)] $\tilde{Z}^{\tilde{\tau}_n}$ takes values in $D_{E_0}[0,\infty)$ and is also a solution to the stopped martingale problem for $(\mathcal{L}^n,\delta_{0} \otimes \mu_0,U_n)$ and thus, combining this with (i), we get that  the finite-dimensional marginals of $\tilde{Z}^{\tilde{\tau}_n}$ and $Z^{\tau_n}$ agree,
    \item[(iii)] from (ii) it can be deduced that $\tilde{\tau}_n \to \infty$ as $n \to \infty$ and that $\tilde{Z}$ and $Z$ have the same distribution.
  \end{itemize}  
  
  To show (i), notice that $\sigma_n$ is bounded and satisfies Assumption~\ref{ass:sigma} and Assumption~\ref{ass:Hregular} since they hold for $H$ and $\tilde{\sigma}$. In particular, $\sigma_n$ and $(\mathcal{D},\mathcal{A})$ satisfy Assumptions~\ref{ass:sigma}, \ref{ass:FellerProcess}, \ref{ass:SigmaBounded} and~\ref{ass:Hregular} and thus by Proposition~\ref{prop:uniquenessBoundedSigma} the RCLL-martingale problem for $(\mathcal{L}^n,\delta_{0} \otimes \mu_0)$ is well-posed. Therefore, by \cite[Chap.~4, Thm.~6.1]{Ethier1986a} for each $U \subset E_0$ open there exists a unique solution to the stopped martingale problem for $(\mathcal{L}^n,\delta_{0} \otimes \mu_0,U)$. Applying optional sampling to $\tau_n$ and the martingales~\eqref{eq:martingaleProblemDef} and noticing
  \begin{equation}\label{eq:auxEq14} 
    \int_0^{\tau_n \wedge t} \mathcal{L} h(Z_s) \dd s = \int_0^{\tau_n \wedge t} \mathcal{L}^n h(Z_s) \dd s,
  \end{equation}  
  we see that $Z^{\tau_n}$ is a (and hence the unique) solution to the stopped martingale problem for $(\mathcal{L}^n,\delta_{0} \otimes \mu_0,U_n)$.
  \smallskip

  To show~(ii), note that, by definition of $\tilde{\tau}_n$, $\tilde{Z}$ is RCLL and $U_n$-valued on $[0,\tilde{\tau}_n)$. Let us first show that actually $\tilde{\P}$-a.s., for each $n$, $\tilde{Z}_{\tilde{\tau}_n} \in E_0$ (a priori, we could have $\tilde{Z}_{\tilde{\tau}_n} = \Delta$). To do so, note that by applying optional sampling and taking expectations in \eqref{eq:martingaleProblemDef},  we obtain
  \begin{equation}\label{eq:auxEqn1} 
    \E[h(\tilde{Z}_{\tilde{\tau}_n \wedge t})] = \E[h(\tilde{Z}_0)] + \E\left[\int_0^{\tilde{\tau}_n \wedge t} \mathcal{L} h(\tilde{Z}_s) \dd s \right] = \E[h(\tilde{Z}_0)] + \E\left[\int_0^{\tilde{\tau}_n \wedge t} \mathcal{L}^n h(\tilde{Z}_s) \dd s \right]
  \end{equation}
  for all $h \in D(\mathcal{L})$ (with $C_0(E_0)$ extended to $E_0^\Delta$ as above). The second step in \eqref{eq:auxEqn1} follows as in \eqref{eq:auxEq14}. Since $\sigma_n$ is bounded, $(D(\mathcal{L}^n),\mathcal{L}^n)$ is conservative by Lemma~\ref{lem:BProperties}~(i) and so (for any $n \in \mathbb{N}$) there exists $\{h_k\}_{k \in \mathbb{N}} \subset D(\mathcal{L})$ such that $\bplim_{k \to \infty} h_k = 1$ and $\bplim_{k \to \infty} \mathcal{L}^n h_k = 0$. In particular, $\bplim_{k \to \infty} h_k = \mathbbm{1}_{E_0}$ in $C(E_0^\Delta)$. Inserting $h_k$ in \eqref{eq:auxEqn1} and letting $k \to \infty$, dominated convergence gives 
  \begin{equation*}
    \P(\tilde{Z}_{\tilde{\tau}_n \wedge t} \in E_0) = \lim_{k \to \infty}\E[h_k(\tilde{Z}_{\tilde{\tau}_n \wedge t})] = \lim_{k \to \infty}\E[h_k(\tilde{Z}_{0})] = \P(\tilde{Z}_0 \in E_0) = 1.
  \end{equation*} 
  Therefore, for any $n \in \mathbb{N}$, $\tilde{Z}$ is RCLL and $U_n$-valued on $[0,\tilde{\tau}_n]$. Thus, for any $n \in \mathbb{N}$, we may view~$\tilde{Z}^{\tilde{\tau}_n}$ as a $D_{E_0}[0,\infty)$-valued process and optional sampling applied to~\eqref{eq:martingaleProblemDef} (and the analogon of~\eqref{eq:auxEq14} for $\tilde{Z}$) shows that $\tilde{Z}^{\tilde{\tau}_n}$ is a solution to the stopped martingale problem for $(\mathcal{L}^n,\delta_{0} \otimes \mu_0,U_n)$. Thus, by (i), the laws of $\tilde{Z}^{\tilde{\tau}_n}$ and $Z^{\tau_n}$ coincide.
  \smallskip

  To show~(iii), first note that $\tilde{\tau}_n \leq \tilde{\tau}_{n+1}$ for all $n \in \mathbb{N}$ and thus $\tau := \lim_{n \to \infty} \tilde{\tau}_n$ is well-defined. Since $\tilde{Z}$ has left-limits in $E_0^\Delta$, also $Y_t := \lim_{n \to \infty} \tilde{Z}_{\tilde{\tau}_n \wedge t}$ is well-defined in $E_0^\Delta$. Furthermore, $Y_t = \Delta$ if and only if $\tau \leq t$. Since $Z$ has sample paths in $D_{E_0}[0,\infty)$, it holds that $\tau_n \to \infty$, $\mathbb{P}$-a.s., and so (ii) implies 
  \begin{equation}\label{eq:auxEqn15}
    \E[h(Y_t)]  = \lim_{n \to \infty} \E[h(\tilde{Z}^{\tilde{\tau}_n}_{t})] = \lim_{n \to \infty}\E[h(Z^{\tau_n}_{t})] = \E[h(Z_t)]
  \end{equation}
  for any $t \geq 0$ and $h \in C_0(E_0)$. Taking $\{h_k\}_{k \in \mathbb{N}} \subset C_0(E_0)$ with $\bplim_{k \to \infty} h_k = 1$ (and thus $\bplim_{k \to \infty} h_k = \mathbbm{1}_{E_0}$ in $C(E_0^\Delta)$), inserting $h_k$ in \eqref{eq:auxEqn15} and letting $k \to \infty$, one obtains 
  \begin{equation*}
    \P(\tau > t) = \P(Y_t \in E_0) = \lim_{k \to \infty}\E[h_k(Y_t)] = \lim_{k \to \infty} \E[h_k(Z_t)] = \P(Z_t \in E_0) = 1
  \end{equation*}
  for all $t \geq 0$. Hence, $\P(\tau = \infty) = 1$, $\tilde{\tau}_n \to \infty$ a.s. and $\tilde{Z}$ does not explode, i.e it has sample paths in $D_{E_0}[0,\infty)$. In particular, for any choice of $0 \leq t_0 < \dots < t_m$, $m \in \mathbb{N}$, $h_0,\dots,h_m \in C_0(E_0)$, 
  \begin{equation*} 
    \E\left[\prod_{k=0}^m h_k(\tilde{Z}_{t_k})\right] = \lim_{n \to \infty}\E\left[\prod_{k=0}^m h_k(\tilde{Z}_{t_k}^{\tilde{\tau}_n})\right] \stackrel{(ii)}{=} \lim_{n \to \infty}\E\left[\prod_{k=0}^m h_k(Z_{t_k}^{\tau_n})\right] = \E\left[\prod_{k=0}^m h_k(Z_{t_k}^{\tau_n})\right].
  \end{equation*}
  Therefore, also the finite-dimensional marginals of~$\tilde{Z}$ and~$Z$ coincide.
  \smallskip
 
  Finally, since now well-posedness of the RCLL-martingale problem for $(\mathcal{L},\delta_{0} \otimes \mu_0)$ in the case $\mu_0 = \delta_{x_0}$ and $x_0 \in E$ is established, Lemma~\ref{lem:BProperties}~(v) and \cite[Thm.~2.1]{Bhatt1993} imply that the RCLL-martingale problem for $(\mathcal{L},\delta_{0} \otimes \mu_0)$ is well-posed also for arbitrary $\mu_0 \in \mathcal{P}(E)$. The exact same argument as above now shows that uniqueness holds even in the class of progressively measurable solutions.
\end{proof}

\subsection{From Uniqueness of the Martingale Problem to Uniqueness for the Fokker--Planck Equation}\label{subsec:WellPosednessFOkkerPlanck} 

Finally, we put together all results obtained in the previous sections. When $\sigma$ is continuous, Theorem~\ref{thm:KurtzResult} can be applied. When~$\sigma$ is not continuous, the following extension of Theorem~\ref{thm:KurtzResult} will be required:\footnote{See footnote~\ref{fn:pregenerator} for a discussion why (ii) implies that $\mathcal{L}_y^0$ is a pre-generator (in the terminology of \cite{Kurtz1998}).}

\begin{theorem}[{\cite[Thm.~2.7]{Kurtz1998}}]\label{thm:KurtzResult2}
  Let $E_0$ and $F$ be locally compact, separable metric spaces, $D(\mathcal{L}^0)\subset C_b(E_0)$ and $\mathcal{L}^0\colon D(\mathcal{L}^0) \to C_b(E_0 \times F)$ linear. Let $\eta\colon E_0\times\mathcal{B}(F) \to [0,1]$ be a transition kernel and define  
  \begin{equation}\label{eq:factorizedOperator} 
    \mathcal{L}_\eta f(x) := \int_F \mathcal{L}^0 f(x,y)\, \eta(x,\d y), \quad f \in D(\mathcal{L}^0).  
  \end{equation}
  For any $y \in F$, define the linear operator $\mathcal{L}^0_y$ with domain $D(\mathcal{L}^0)$ on $C_b(E_0)$ by $f \mapsto \mathcal{L}^0f(\cdot,y)$. 
  Let $\nu \in \mathcal{P}(E_0 \times F)$ and suppose that 
  \begin{itemize}
    \item[(i)] $D(\mathcal{L}^0)$ is an algebra and separates points,
    \item[(ii)] for any $x \in E_0$ and $y \in F$ there exists a solution to the RCLL-martingale problem for $(\mathcal{L}^0_y,\delta_x)$,
    \item[(iii)] $\mathcal{L}_\eta$ and $(\mathcal{L}_\eta,\nu)$ satisfy the conditions (ii) and (iv) of Theorem~\ref{thm:KurtzResult}.
  \end{itemize}
  Then the conclusion of Theorem~\ref{thm:KurtzResult} is valid, i.e. uniqueness holds for the forward equation for $(\mathcal{L}_\eta,\nu)$.
\end{theorem}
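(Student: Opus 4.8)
The plan is to deduce the statement from Theorem~\ref{thm:KurtzResult} by verifying its four hypotheses for the averaged operator $\mathcal{L}_\eta$. Three of them are essentially free: since $D(\mathcal{L}_\eta)=D(\mathcal{L}^0)$ by construction, hypothesis~(i) of Theorem~\ref{thm:KurtzResult} (algebra, separates points) coincides with hypothesis~(i) here, while hypotheses~(ii) and~(iv) of Theorem~\ref{thm:KurtzResult} are exactly the conditions imposed in~(iii). The only genuinely missing ingredient is hypothesis~(iii) of Theorem~\ref{thm:KurtzResult}, namely the existence, for each $x\in E_0$, of an RCLL solution to the martingale problem for $(\mathcal{L}_\eta,\delta_x)$; once this is in place, Theorem~\ref{thm:KurtzResult} delivers uniqueness of the forward equation for $(\mathcal{L}_\eta,\nu)$, which is the assertion.

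To produce such a solution I would exploit the product structure $\mathcal{L}_\eta f(x)=\int_F \mathcal{L}^0 f(x,y)\,\eta(x,\dd y)$. The natural object is a coupled process $(X_t,Y_t)$ in $E_0\times F$ for which the conditional law of $Y_t$ given the history of $X$ equals $\eta(X_t,\cdot)$ and for which
\[
  f(X_t)-f(X_0)-\int_0^t \mathcal{L}^0 f(X_s,Y_s)\,\dd s,\qquad t\geq 0,
\]
is a martingale for every $f\in D(\mathcal{L}^0)$. Conditioning on the filtration generated by $X$ and using $\mathbb{E}[\mathcal{L}^0 f(X_s,Y_s)\mid \mathcal{F}_s^X]=\int_F \mathcal{L}^0 f(X_s,y)\,\eta(X_s,\dd y)=\mathcal{L}_\eta f(X_s)$ then shows that $X$ alone solves the martingale problem for $(\mathcal{L}_\eta,\delta_x)$. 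Thus the task reduces to constructing the pair $(X,Y)$.

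Building $(X,Y)$ is the main obstacle and is precisely where hypothesis~(ii) is used: each frozen operator $\mathcal{L}^0_y=\mathcal{L}^0(\cdot,y)$ admits RCLL martingale-problem solutions, and one \emph{randomizes} these over the control $y$ drawn according to $\eta$. I would set this up as a relaxed/averaged martingale problem and obtain an RCLL solution by the usual compactness (tightness plus martingale characterization) arguments available on a locally compact, separable state space, using the solvability of each $\mathcal{L}^0_y$ to guarantee that the averaged dynamics is genuinely realizable. The delicate points are (a) arranging the coupling so that $Y_t$ really has conditional distribution $\eta(X_t,\cdot)$ for all $t$ simultaneously, since $\eta$ is only a measurable kernel, and (b) checking that the limiting $X$ has RCLL paths rather than merely taking values in the one-point compactification. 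These regularity issues, rather than the reduction itself, are where the work lies.

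Once existence for $(\mathcal{L}_\eta,\delta_x)$ is secured for every $x$, all hypotheses of Theorem~\ref{thm:KurtzResult} hold for $\mathcal{L}_\eta$ and the conclusion follows directly. An alternative route, closer to the structure of the proof of Theorem~\ref{thm:KurtzResult} itself, would bypass the explicit construction of an $\mathcal{L}_\eta$-solution and instead realize any given forward-equation solution $\{\mu_t\}$ as the one-dimensional marginals of a martingale-problem solution obtained by randomizing the component solutions, and then invoke the assumed uniqueness of the martingale problem for $(\mathcal{L}_\eta,\nu)$; this keeps the existence hypothesis at the level of the components $\mathcal{L}^0_y$, which is the real gain of Theorem~\ref{thm:KurtzResult2} over Theorem~\ref{thm:KurtzResult}.
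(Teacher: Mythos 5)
There is a genuine gap, and it sits at the very first step of your reduction. Theorem~\ref{thm:KurtzResult} carries the standing hypothesis that the operator maps its domain into $C_b(E_0)$, i.e. $\mathcal{L}\colon D(\mathcal{L}) \to C_b(E_0)$. The averaged operator $\mathcal{L}_\eta f(x) = \int_F \mathcal{L}^0 f(x,y)\,\eta(x,\d y)$ does \emph{not} satisfy this: since $\eta$ is only a measurable transition kernel, $\mathcal{L}_\eta f$ is in general only a bounded \emph{measurable} function of $x$, not a continuous one. So even if you succeeded in constructing, for every $x \in E_0$, an RCLL solution to the martingale problem for $(\mathcal{L}_\eta,\delta_x)$, you still could not invoke Theorem~\ref{thm:KurtzResult} for $\mathcal{L}_\eta$; its hypotheses (i)--(iv) are not the only requirements, and the continuity of the range is used essentially in Kurtz's proof (weak-convergence and martingale-characterization arguments break for merely measurable ranges). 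This is not a removable technicality: it is precisely the reason Theorem~\ref{thm:KurtzResult2} exists as a separate statement, and precisely why the paper needs it --- in Lemma~\ref{lem:assSatisfied}~(ii) the kernel is $\eta((t,x),\cdot) = \delta_{\sigma(t,x)}$ with $\sigma$ merely measurable, so $\mathcal{L}_\eta = \mathcal{L}$ has discontinuous range and Theorem~\ref{thm:KurtzResult} is inapplicable (the paper says exactly this just before stating Theorem~\ref{thm:KurtzResult2}). If your reduction were valid, Theorem~\ref{thm:KurtzResult2} would be redundant. The same measurability problem also undermines your construction of the coupled process $(X,Y)$: the tightness/limit arguments you appeal to require some continuity of $x \mapsto \eta(x,\cdot)$, which is unavailable, and the projection argument additionally needs the conditional law of $Y_s$ given $\mathcal{F}^X_t$ (for $t \geq s$, not just $t = s$) to be $\eta(X_s,\cdot)$, which must be built into the construction, not assumed.

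For the record, the paper gives no proof of this statement at all: it is quoted verbatim from \cite[Thm.~2.7]{Kurtz1998} (with the pre-generator hypothesis replaced by the stronger hypothesis~(ii), as the footnote explains). Kurtz's actual proof runs in the opposite direction from your main plan and is closer to the ``alternative route'' you sketch in your last paragraph: given a solution $\{\nu_t\}$ of the forward equation for $\mathcal{L}_\eta$, one constructs --- using the pre-generator property of the sliced operators $\mathcal{L}^0_y$ (which \emph{do} have continuous range, in $C_b(E_0\times F)$) and a Markov-mapping/Echeverr\'ia-type argument --- a solution of the martingale problem for $(\mathcal{L}_\eta,\nu)$ whose one-dimensional marginals are $\{\nu_t\}$, and then concludes from the assumed uniqueness of that martingale problem. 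The gain of keeping the existence hypothesis at the level of the $\mathcal{L}^0_y$, which you correctly identify as the point of the theorem, is achieved there by this marginal-realization argument, not by verifying existence for $\mathcal{L}_\eta$ and re-applying Theorem~\ref{thm:KurtzResult}; your sketch of that route contains no substance where the actual work lies.
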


In the next lemma we show how to obtain uniqueness for the Fokker--Planck equation for $(\mathcal{D},\mathcal{A})$ from uniqueness for $(D(\mathcal{L}),\mathcal{L})$.

\begin{lemma}\label{lem:assSatisfied}
  Suppose $\sigma$ and $(\mathcal{D},\mathcal{A})$ are as in Theorem~\ref{thm:KolmogorovUniqueness} and $(D(\mathcal{L}),\mathcal{L})$ is defined as in~\eqref{eq:InhomogeneousGenerator}.
  Let $\mu_0 \in \mathcal{P}(E)$, $\thor$ as in Assumption~\ref{ass:sigma} and define $\nu:= \delta_0 \otimes \mu_0$. Then the following statements hold:
  \begin{itemize}
    \item[(i)] If $\sigma \mathcal{A} f \in C_0([0,\thor]\times E)$ for all $f \in \mathcal{D}$, then the assumptions of Theorem~\ref{thm:KurtzResult} are satisfied.
    \item[(ii)] Suppose there exists $R > 0$ such that $|\sigma(t,x)|\leq R$ for all $(t,x) \in [0,\infty) \times E$. Define $E_0:=[0,\infty) \times E$, $F := [0,R]$, $D(\mathcal{L}^0):= D(\mathcal{L})$ and for $f \in \mathcal{D}$, $\gamma \in C_c^1[0,\infty)$, set
    \begin{equation*}
      \mathcal{L}^0 (f\gamma)((t,x),v) := \gamma(t)v\mathcal{A}f(x) + f(x) \gamma'(t) ,  \quad (t,x) \in E_0 ,\ v \in F,
    \end{equation*} 
    and linearly extend this definition of $\mathcal{L}^0$ to $D(\mathcal{L}^0)$. Finally, define $\eta((t,x),\cdot) := \delta_{\sigma(t,x)}(\cdot)$ and $\mathcal{L}_\eta$ as in \eqref{eq:factorizedOperator}. Then the assumptions of Theorem~\ref{thm:KurtzResult2} are satisfied and $(D(\mathcal{L}),\mathcal{L})$ coincides with $(D(\mathcal{L}^0),\mathcal{L}_\eta)$.
  \end{itemize}
  In particular, in both cases the conclusion of Theorem~\ref{thm:KurtzResult} is valid, i.e. uniqueness holds for the forward equation~\eqref{eq:timeInhomogeneousPIDE} for~$(\mathcal{L},\nu)$.
\end{lemma}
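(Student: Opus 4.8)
The plan is to prove both parts by verifying, point by point, the hypotheses of the cited uniqueness theorems of Kurtz---Theorem~\ref{thm:KurtzResult} for part~(i) and its factorized extension Theorem~\ref{thm:KurtzResult2} for part~(ii)---and then to read off the asserted uniqueness for the forward equation~\eqref{eq:timeInhomogeneousPIDE} for $(\mathcal{L},\nu)$ as the common conclusion. All the genuinely analytic content (existence and uniqueness of the relevant martingale problems, and the separability property~\eqref{eq:separabilityOfDomain}) has already been established in Lemma~\ref{lem:BProperties} and Propositions~\ref{prop:ExistenceUnbounded}, \ref{prop:uniquenessBoundedSigma} and~\ref{prop:UniquenessUnbounded}, so the task here is essentially to check the remaining structural hypotheses and to match notation.

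For part~(i), working on the locally compact, complete, separable space $E_0 = [0,\infty)\times E$, I would verify the four hypotheses of Theorem~\ref{thm:KurtzResult}. Hypothesis~(i) (that $D(\mathcal{L})$ is an algebra and separates points) is routine: since $\mathcal{D}$ is an algebra by Assumption~\ref{ass:FellerProcess}~(i) and $C_c^1[0,\infty)$ is an algebra, the identity $(f_1\gamma_1)(f_2\gamma_2)=(f_1f_2)(\gamma_1\gamma_2)$ shows $D(\mathcal{L})$ is an algebra, while density of $\mathcal{D}$ in $C_0(E)$ (hence separation of points of $E$) together with separation of points of $[0,\infty)$ by $C_c^1$ yields separation of points of $E_0$. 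Hypothesis~(ii) is exactly~\eqref{eq:separabilityOfDomain}, supplied by Lemma~\ref{lem:BProperties}~(v) under the continuity assumption $\sigma\mathcal{A}f\in C_0([0,\thor]\times E)$. Hypothesis~(iii), existence of an RCLL solution for $(\mathcal{L},\delta_y)$ for every $y=(s_0,x_0)\in E_0$, is Proposition~\ref{prop:ExistenceUnbounded}. Hypothesis~(iv), uniqueness for $(\mathcal{L},\nu)$ with $\nu=\delta_0\otimes\mu_0$, is Proposition~\ref{prop:UniquenessUnbounded}. Before invoking Theorem~\ref{thm:KurtzResult} one must also confirm that $\mathcal{L}$ really maps $D(\mathcal{L})$ into $C_b(E_0)$, which is where the continuity hypothesis of this case is used.

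For part~(ii), I would first record the elementary computation that, with $\eta((t,x),\cdot)=\delta_{\sigma(t,x)}$, formula~\eqref{eq:factorizedOperator} gives $\mathcal{L}_\eta(f\gamma)(t,x)=\mathcal{L}^0(f\gamma)((t,x),\sigma(t,x))=\gamma(t)\sigma(t,x)\mathcal{A}f(x)+f(x)\gamma'(t)$, so that $(D(\mathcal{L}),\mathcal{L})$ and $(D(\mathcal{L}^0),\mathcal{L}_\eta)$ coincide. The advantage of the factorization is that $\mathcal{L}^0$ is jointly continuous in all variables and the discontinuity of $\sigma$ has been absorbed into the transition kernel $\eta$, so the $C_b$-valuedness required of the base operator holds even though $\mathcal{L}$ itself need not be continuous. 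I would then check the three hypotheses of Theorem~\ref{thm:KurtzResult2}: hypothesis~(i) is verified exactly as in part~(i); hypothesis~(iii), namely that $\mathcal{L}_\eta=\mathcal{L}$ fulfils conditions~(ii) and~(iv) of Theorem~\ref{thm:KurtzResult}, follows from Lemma~\ref{lem:BProperties}~(v) (its second, bounded case) and from Proposition~\ref{prop:uniquenessBoundedSigma}, which applies because $\sigma$ is bounded by $R$, so that Assumption~\ref{ass:SigmaBounded} holds.

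The step requiring the most care, and which I expect to be the main obstacle, is hypothesis~(ii) of Theorem~\ref{thm:KurtzResult2}: existence of an RCLL solution for the \emph{frozen-coefficient} operators $(\mathcal{L}^0_v,\delta_{(s_0,x_0)})$ for \emph{every} constant $v\in[0,R]$ and every starting point. Such constant coefficients do not fit the structural form $\sigma=H\tilde\sigma$ of Assumption~\ref{ass:sigma}, so Proposition~\ref{prop:ExistenceUnbounded} cannot be invoked directly. Instead I would argue by a deterministic time scaling: for $v>0$ the process $M_{vt}$, with $M$ a solution of the RCLL-martingale problem for $(\mathcal{A},\delta_{x_0})$ (which exists by Assumption~\ref{ass:FellerProcess}~(ii)), solves the martingale problem for the time-homogeneous operator $v\mathcal{A}$, while for $v=0$ the constant path $X_t\equiv x_0$ does; in either case Lemma~\ref{lem:BProperties}~(ii), applied with the constant coefficient $v$ in place of $\sigma$, lifts this to the required space-time solution for $(\mathcal{L}^0_v,\delta_{(s_0,x_0)})$. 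A secondary point to watch, already flagged in part~(i), is the behaviour of $\mathcal{L}h$ near the interface $t=\thor$, where the prescription $\sigma(t,\cdot)=0$ for $t>\thor$ meets the possibly nonvanishing values on $[0,\thor]$; this is precisely the difficulty that the factorization in part~(ii) circumvents.
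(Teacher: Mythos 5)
Your proposal follows the paper's proof almost step for step. Part (i) is verified with exactly the same four citations the paper uses (Assumption~\ref{ass:FellerProcess}~(i) for the algebra/point-separation property, Lemma~\ref{lem:BProperties}~(v) for~\eqref{eq:separabilityOfDomain}, Proposition~\ref{prop:ExistenceUnbounded} for existence, Proposition~\ref{prop:UniquenessUnbounded} for uniqueness), and in part (ii) your identification $\mathcal{L}_\eta = \mathcal{L}$, your frozen-coefficient construction for hypothesis~(ii) of Theorem~\ref{thm:KurtzResult2} --- time-scaling $X_t := M_{vt}$ of a solution for $(\mathcal{A},\delta_{x_0})$ and lifting to space-time via Lemma~\ref{lem:BProperties}~(ii) --- and your use of the bounded case of Lemma~\ref{lem:BProperties}~(v) are precisely what the paper does.

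There is, however, one genuine gap, in your verification of hypothesis~(iii) of Theorem~\ref{thm:KurtzResult2}. That hypothesis requires condition~(iv) of Theorem~\ref{thm:KurtzResult} for $(\mathcal{L}_\eta,\nu) = (\mathcal{L},\nu)$, i.e.\ uniqueness for the \emph{martingale problem} for $(\mathcal{L},\nu)$, which in this paper's conventions (Remark~\ref{rmk:expections}) means uniqueness within the class of all progressively measurable solutions. You cite Proposition~\ref{prop:uniquenessBoundedSigma} for this, but that proposition only yields well-posedness of the \emph{RCLL}-martingale problem, i.e.\ uniqueness among RCLL solutions. This is strictly weaker: as the paper emphasizes just before Proposition~\ref{prop:UniquenessUnbounded} (citing \cite{Bhatt2003}), a progressively measurable solution need not admit an RCLL modification in general, so RCLL uniqueness does not automatically upgrade to the class demanded by Kurtz's theorem; establishing that upgrade is exactly the second purpose of Proposition~\ref{prop:UniquenessUnbounded}, whose final paragraph proves uniqueness ``even in the class of progressively measurable solutions.'' The repair is a one-line change of citation: since $\sigma$ and $(\mathcal{D},\mathcal{A})$ are as in Theorem~\ref{thm:KolmogorovUniqueness} (boundedness by $R$ gives Assumption~\ref{ass:SigmaBounded}) and $\nu = \delta_0\otimes\mu_0$, Proposition~\ref{prop:UniquenessUnbounded} applies verbatim and is what the paper invokes at this point --- just as you yourself do in part~(i). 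Finally, your flagged concern about continuity of $\mathcal{L}h$ across $t = \thor$ in part~(i) is left unresolved in your write-up, but the paper does not address it explicitly either (its Lemma~\ref{lem:BProperties}~(v) argument works on $[0,\thor]\times E$), so this is a question about the paper's framework rather than a defect of your argument relative to it.
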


\begin{proof}
  To prove~(i), firstly note that by Assumption~\ref{ass:FellerProcess}~(i) $\mathcal{D}$ is an algebra and dense. Hence, $D(\mathcal{L})$ is an algebra and separates points. Secondly, by Lemma~\ref{lem:BProperties}~(v) the condition~\eqref{eq:separabilityOfDomain} is indeed satisfied. Thirdly, by Proposition~\ref{prop:ExistenceUnbounded} existence holds and fourthly, by Proposition~\ref{prop:UniquenessUnbounded} uniqueness holds. Therefore, assumptions (i)-(iv) of Theorem~\ref{thm:KurtzResult} are indeed satisfied.
  \smallskip
   
  To prove~(ii), notice that $\sigma$ is a measurable function, and thus $\eta$ is indeed a transition kernel. Furthermore, by definition we have
  \begin{equation*}
    \mathcal{L}h(t,x) = \int_0^R \mathcal{L}^0h((t,x),v)\, \eta((t,x),\d v) = \mathcal{L}_\eta h(t,x). 
  \end{equation*}
  Hence, $(D(\mathcal{L}),\mathcal{L})$ and $(D(\mathcal{L}^0),\mathcal{L}_\eta)$ indeed coincide and it only remains to show that the assumptions of Theorem~\ref{thm:KurtzResult2} are satisfied. Firstly, $D(\mathcal{L}^0)=D(\mathcal{L})$ is an algebra and separates points as argued in (i). Secondly, for any $v \in [0,R]$ and $(s_0,x) \in E_0$ a solution to the RCLL-martingale problem for $(\mathcal{L}_v^0,\delta_{(s_0,x)})$ can be constructed as follows: Let $M$ be a solution to the RCLL-martingale problem for $(\mathcal{A},\delta_x)$ and set $X_t := M_{v t}$. Then by elementary change of variable, 
  \begin{equation*} 
    f(X_t) - \int_0^t v \mathcal{A} f(X_s) \dd s  = f(M_{t v}) - \int_0^{t v} \mathcal{A} f(M_s) \dd s
  \end{equation*}
  for all $f \in \mathcal{D}, t \geq 0$, and thus $X$ is a solution to the (time-inhomogeneous) RCLL-martingale problem for $(v \mathcal{A},\delta_{x})$. Therefore, by Lemma~\ref{lem:BProperties}, $(t+s_0,X_t)_{t \geq 0}$ is a solution to the RCLL-martingale problem for $(\mathcal{L}_v^0,\delta_{(s_0,x)})$. Finally, since $\sigma$ is bounded, $(D(\mathcal{L}),\mathcal{L})$ satisfies \eqref{eq:separabilityOfDomain} by Lemma~\ref{lem:BProperties}~(v) and the martingale problem for $(\mathcal{L},\nu)$ is well-posed by Proposition~\ref{prop:UniquenessUnbounded}.
\end{proof}

After these preparations, we are ready to prove the main result in this section, Theorem~\ref{thm:KolmogorovUniqueness}~(ii).

\begin{proof}[Proof of Theorem~\ref{thm:KolmogorovUniqueness}~(ii)]
  Suppose $(p(t,\cdot))_{t \in [0,\thor]}$ and $(\tilde{p}(t,\cdot))_{t \in [0,\thor]}$ both satisfy~ \eqref{eq:measurabilityKolmogorovEqn} and \eqref{eq:KolmogorovEqn}. Defining for any $t \geq 0$ the measures $\nu_t:=\delta_t \otimes p(t,\cdot)$ and $\tilde{\nu}_t:=\delta_t \otimes \tilde{p}(t,\cdot)$, by Lemma~\ref{lem:BProperties}~(iv), $(\nu_t)_{t\geq 0}$ and $(\tilde{\nu}_t)_{t\geq 0}$ both satisfy \eqref{eq:timeInhomogeneousmeasurabilityKolmogorovEqn} and  \eqref{eq:timeInhomogeneousPIDE}. Under our assumptions, Lemma~\ref{lem:assSatisfied} implies that uniqueness holds for \eqref{eq:timeInhomogeneousPIDE}, i.e. $\tilde{\nu}_t = \nu_t$ for all $t \geq 0$ or $\delta_t \otimes p(t,\cdot) = \delta_t \otimes q(t,\cdot)$ for all $t \geq 0$. In particular, $\int_{E} f(x)\, p(s,\d x) = \int_{E} f(x)\, \tilde{p}(s,\d x)$ for all $f \in C_0(E)$ and all $s \in [0,\thor]$ and thus the assertion follows.
\end{proof}

\appendix

\section{Auxiliary results for the construction of time-changes}\label{sec:appendix}

In order to ensure the existence of the time-change~$\tau$ as defined by the random differential equation~\eqref{eq:ode time-change}, we used the following lemma concerning so-called Carath\'eodory differential equations.

\begin{lemma}\label{lem:timeinverse}
  Let $\thor > 0$ and consider the Carath\'eodory differential equation 
  \begin{equation}\label{eq:inverseStopping}
    \mathcal{T}(t) = \int_0^t \gamma(\mathcal{T}(s),s)\dd s, 
  \end{equation}
  where $\gamma \colon [0,\thor] \times [0,\infty)  \to [0,\infty)$ and the integral is understood in the Lebesgue sense. For some $S \in (0,\thor]$ and $T>0$ suppose that $\gamma(r,\cdot)$ is measurable for each $r \in [0,S]$, $\gamma(0,\cdot)$ is integrable on $[0,T]$ and there exists an integrable function $f\colon [0,T]\to [0,\infty)$ such that
  \begin{equation*} 
    |\gamma(r,t) - \gamma(s,t) | \leq f(t) |r -s| \quad  \text{for all}\quad r,s\in [0,S],\, t \in [0,T].
  \end{equation*}
  Then there exists a unique absolutely continuous function $\mathcal{T}\colon I \to [0,S]$ satisfying \eqref{eq:inverseStopping} for some interval $I \subset [0,\infty)$, where either there exists $T_0 \in (0,T]$ such that we may take $I = [0,T_0]$ and we have $\mathcal{T}(T_0) = S$ or we may take $I = [0,T]$ and have $\mathcal{T}(t) < S$ for all $t \leq T$. 
\end{lemma}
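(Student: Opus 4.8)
The plan is to reduce the constrained problem to an unconstrained one, solve the latter globally on $[0,T]$ by a fixed-point argument adapted to data that is only integrable in time, and then read off the dichotomy from the first time the solution reaches $S$. As a preliminary I would record an integrable majorant: taking $s=0$ in the Lipschitz hypothesis gives $0 \le \gamma(r,t) \le \gamma(0,t) + f(t)\,r \le g(t)$ for all $r \in [0,S]$, $t \in [0,T]$, where $g := \gamma(0,\cdot) + S f$ is integrable on $[0,T]$; put $G(t) := \int_0^t g(s)\,\dd s$. This majorant both guarantees that the integrand in \eqref{eq:inverseStopping} is integrable and yields the a priori bound $\mathcal{T}(t) \le G(t)$ for any solution.

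To dispose of the state constraint $r\in[0,S]$ I would extend the coefficient by truncation, setting $\bar\gamma(r,t) := \gamma(\min(r,S),t)$ for all $r \ge 0$, $t \in [0,T]$. Because $|\min(r,S)-\min(s,S)| \le |r-s|$, the function $\bar\gamma$ inherits the Lipschitz bound $|\bar\gamma(r,t)-\bar\gamma(s,t)| \le f(t)|r-s|$ now for all $r,s \ge 0$, is a Carath\'eodory function (measurable in $t$, continuous in $r$ for a.e.\ $t$), and still satisfies $0 \le \bar\gamma \le g$. I would then solve the unconstrained equation $\bar{\mathcal{T}}(t) = \int_0^t \bar\gamma(\bar{\mathcal{T}}(s),s)\,\dd s$ on all of $[0,T]$ via the Banach fixed point theorem. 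The decisive device — needed precisely because $\gamma$ is only integrable and not bounded in $t$, so the naive Lipschitz--Gr\"onwall estimate is unavailable — is a Bielecki-type weighted norm $\|\varphi\|_\lambda := \sup_{t \in [0,T]} e^{-\lambda F(t)} |\varphi(t)|$, where $F(t) := \int_0^t f(s)\,\dd s$, on the complete space of nonnegative continuous functions on $[0,T]$. The map $\Phi\varphi(t) := \int_0^t \bar\gamma(\varphi(s),s)\,\dd s$ sends this space into itself (the integrand is dominated by $g\in L^1$, so $\Phi\varphi$ is absolutely continuous and nonnegative), and the elementary estimate $|\Phi\varphi_1(t)-\Phi\varphi_2(t)| \le \int_0^t f(s)|\varphi_1(s)-\varphi_2(s)|\,\dd s$ yields $\|\Phi\varphi_1 - \Phi\varphi_2\|_\lambda \le \lambda^{-1}\|\varphi_1-\varphi_2\|_\lambda$, a genuine contraction once $\lambda > 1$. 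This produces a unique $\bar{\mathcal{T}}$ on $[0,T]$, nondecreasing since $\bar\gamma \ge 0$.

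To extract the statement, set $T_0 := \inf\{t \in [0,T] : \bar{\mathcal{T}}(t) \ge S\}$. If this set is empty, then $\bar{\mathcal{T}}(t) < S$ throughout $[0,T]$, the truncation is inactive along the path (so $\bar\gamma(\bar{\mathcal{T}}(s),s) = \gamma(\bar{\mathcal{T}}(s),s)$), and $\mathcal{T} := \bar{\mathcal{T}}$ on $I := [0,T]$ solves \eqref{eq:inverseStopping}, which is the second alternative. Otherwise $T_0 \in (0,T]$ (recall $\bar{\mathcal{T}}(0)=0<S$), continuity gives $\bar{\mathcal{T}}(T_0)=S$ with $\bar{\mathcal{T}}<S$ on $[0,T_0)$, the truncation is again inactive on $[0,T_0]$, and $\mathcal{T} := \bar{\mathcal{T}}|_{[0,T_0]}$ solves \eqref{eq:inverseStopping} with $\mathcal{T}(T_0)=S$, the first alternative. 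For uniqueness, any absolutely continuous $[0,S]$-valued solution of \eqref{eq:inverseStopping} on an interval $[0,b]$ also solves the extended equation there (since $\min(\mathcal{T}(s),S)=\mathcal{T}(s)$), hence agrees with $\bar{\mathcal{T}}|_{[0,b]}$ by uniqueness of the fixed point; thus both the solution and the partition into the two cases are determined by $\bar{\mathcal{T}}$, and exactly one alternative occurs.

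I expect the main obstacle to be the interplay of the two nonstandard features. The coefficient is only integrable, not bounded, in the time variable, which is what forces the weighted-norm contraction in place of a classical constant-Lipschitz Gr\"onwall argument; and the hard state constraint $\mathcal{T}\in[0,S]$, which the truncation $\bar\gamma$ removes at the cost of having to verify that the truncation remains inactive up to the hitting time $T_0$, so that $\bar{\mathcal{T}}$ genuinely solves the original, untruncated equation on the interval $I$.
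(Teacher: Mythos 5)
Your proof is correct, but it follows a genuinely different route from the paper. The paper's proof is essentially citation-based: it records the same integrable majorant $|\gamma(r,t)|\leq|\gamma(0,t)|+f(t)S$ to verify the Carath\'eodory conditions of Filippov, then invokes Filippov's local existence theorem, his continuation theorem (to extend the solution either to all of $[0,T]$ or until it reaches the level $S$), and his uniqueness theorem for the integrable-Lipschitz case. You instead give a self-contained argument: truncating the coefficient via $\bar\gamma(r,t)=\gamma(\min(r,S),t)$ to remove the state constraint, solving the truncated equation globally on $[0,T]$ by the Banach fixed point theorem in a Bielecki norm $\|\varphi\|_\lambda=\sup_t e^{-\lambda F(t)}|\varphi(t)|$ (which is exactly the right tool given that the Lipschitz ``constant'' $f$ is only integrable), and then reading off the dichotomy from the first hitting time of $S$, at which point the truncation is verified to be inactive. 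What the paper's approach buys is brevity--three citations replace the entire analytic core; what yours buys is transparency and constructiveness: existence, uniqueness, and the continuation dichotomy all come out of a single contraction argument, with no appeal to maximal-interval theory. One small point you should make explicit: uniqueness of the fixed point on $[0,T]$ does not literally apply to a competitor solution defined only on $[0,b]$ with $b<T$; you need to observe that the same contraction argument runs on $C[0,b]$, that $\bar{\mathcal{T}}|_{[0,b]}$ is a fixed point there, and that any $[0,S]$-valued solution of the original equation on $[0,b]$ is also a fixed point of the truncated map on $[0,b]$ -- which is immediate, but is the step doing the work in your final paragraph.
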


\begin{proof}
  Since 
  \begin{equation*}
    |\gamma(r,t)| \leq |\gamma(0,t)| + |f(t)| S\quad \text{for all}\quad r \in [0,S] 
  \end{equation*}
  and the right-hand side is integrable on $[0,T]$, $\gamma$ satisfies the Carath\'eodory Conditions in \cite[Chap.~1]{Filippov1988} and thus \cite[Chap.~1, Thm.~1]{Filippov1988} guarantees the existence of a solution~$\mathcal{T}$ on an interval $[0,T_0]$ for some $T_0 > 0$. The solution~$\mathcal{T}$ can be extended either to the whole interval $[0,T]$ provided $\mathcal{T}(t)\leq S$ for all $t\in [0,T]$ or to the interval $[0,T_0]$ for some $T_0 \in (0,T]$ with $\mathcal{T}(T_0) = S$ (see e.g. \cite[Chap.~1, Thm.~4]{Filippov1988}). Uniqueness of the solution follows by \cite[Chap.~1, Thm.~2]{Filippov1988}.
\end{proof}

Next, we provide a condition that is useful in verifying regularity of $H$ (see Definition~\ref{def:regular}) needed for the existence of the time-change in Lemma~\ref{lem:timechange} and the uniqueness in Lemma~\ref{lem:uniquenessOfTimeChange} below.

\begin{proposition}\label{prop:regular}
  Let $\mathcal{D} \subset C_0(E)$ dense in $C_0(E)$ and $\mathcal{A}\colon \mathcal{D} \to C_0(E)$ be linear. Suppose $M$ is a solution on $(\Omega,\mathcal{F},\mathbb{P})$ to the RCLL-martingale problem for $(\mathcal{A},\mu_0)$, for some $\mu_0 \in \mathcal{P}(E)$. Denote by $P$ the law on $D_E[0,\infty)$ of $M$. Then any $H \in \mathcal{D}$ with $H \geq 0$ is regular for $P$.
\end{proposition}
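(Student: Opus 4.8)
The plan is to verify the two clauses of Definition~\ref{def:regular} directly for $M$; since $P$ is the law of $M$ and the coordinate process under $P$ has the same law, every $P$-a.s.\ statement about $Z$ is the corresponding $\P$-a.s.\ statement about $M$. Throughout set $\rho:=\inf\{s\ge 0:H(M_s)=0\}$, and note that $\{H=0\}$ is closed because $H\in\mathcal{D}\subset C_0(E)$ is continuous. The clause $H(M_\rho)=0$ on $\{\rho<\infty\}$ is the soft one, and I would treat it exactly as in Example~\ref{ex:regularAt0isregular}: on $\{\rho<\infty\}$ either the infimum is attained, or there are $s_n\downarrow\rho$ with $H(M_{s_n})=0$, and then right-continuity of $M$ together with continuity of $H$ gives $H(M_\rho)=\lim_n H(M_{s_n})=0$.

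The substance is the identity $\rho=\inf\{s:\int_0^s H(M_u)^{-1}\dd u=\infty\}$, which I would prove by two inclusions. For ``$\ge$'' I would argue pathwise: for $s<\rho$ the RCLL path $(M_u)_{u\in[0,s]}$ has relatively compact range and $H(M_u)>0$ on $[0,s]$, so that $m:=\inf_{u\in[0,s]}H(M_u)>0$ and $\int_0^s H(M_u)^{-1}\dd u\le s/m<\infty$. The only gap is that $m$ could a priori vanish through a left limit $M_{u^\ast-}\in\{H=0\}$ attained by a jump at some $u^\ast<\rho$; this forces $H(M_u)\to 0$ as $u\uparrow u^\ast$, and I would discard the corresponding paths as a $\P$-null set so that $m>0$ holds almost surely.

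The main work is the reverse inclusion, namely $\int_0^s H(M_u)^{-1}\dd u=\infty$ for every $s>\rho$, $\P$-a.s.\ on $\{\rho<\infty\}$, and this is the step I expect to be hardest. Here I would exploit $H\in\mathcal{D}$: the process $H(M_t)-H(M_0)-\int_0^t\mathcal{A}H(M_s)\dd s$ is a martingale and $\mathcal{A}H\in C_0(E)$ is bounded by some $C$. Optional sampling at the stopping times $\rho$ and $\rho+t$, together with $H(M_\rho)=0$, yields $\E[H(M_{\rho+t})\mid\mathcal{F}_\rho]\le Ct$ on $\{\rho<\infty\}$, whence $\E[\int_{I}H(M_{\rho+t})\dd t\mid\mathcal{F}_\rho]\le C\int_I t\,\dd t$ for every interval $I\subset[0,\infty)$. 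Splitting $[0,\delta]$ into dyadic blocks $I_k:=[2^{-k-1}\delta,2^{-k}\delta]$ and using Cauchy--Schwarz in the form $\int_{I_k}H(M_{\rho+t})^{-1}\dd t\ge |I_k|^2/\int_{I_k}H(M_{\rho+t})\dd t$ followed by conditional Jensen for $x\mapsto 1/x$, the block bounds $\E[\int_{I_k}H\mid\mathcal{F}_\rho]\le C'2^{-2k}\delta^2$ and $|I_k|^2=2^{-2k-2}\delta^2$ give $\E[\int_{I_k}H(M_{\rho+t})^{-1}\dd t\mid\mathcal{F}_\rho]\ge c$ for a constant $c>0$ independent of $k$; summing over $k$ forces $\E[\int_0^\delta H(M_{\rho+t})^{-1}\dd t\mid\mathcal{F}_\rho]=\infty$ for every $\delta>0$. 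Morally, the bounded drift prevents $H(M)$ from climbing away from $0$ faster than linearly, while diffusive fluctuations keep it small enough that $1/H$ is non-integrable.

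The genuine obstacle is upgrading this infinite \emph{conditional-expectation} bound to the almost-sure statement $\int_0^\delta H(M_{\rho+t})^{-1}\dd t=\infty$, since a nonnegative random variable can have infinite conditional mean yet be a.s.\ finite. Because the proposition assumes only the existence of a single solution $M$, with no well-posedness and hence no strong Markov property to restart at $M_\rho$, I cannot appeal to independence of successive excursions. I would therefore aim to make the dyadic estimate hold pathwise, for instance by bounding the second conditional moments of the block integrals and running a conditional Borel--Cantelli argument, or by dominating $H(M)$ after $\rho$ by an explicit one-dimensional comparison process; carefully accommodating possible jumps of $M$ across $\{H=0\}$ (the same phenomenon flagged in the ``$\ge$'' inclusion) is the accompanying technical point.
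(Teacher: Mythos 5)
Your decomposition into the two clauses of Definition~\ref{def:regular} matches the paper's, the soft clause is handled the same way, and your conditional-expectation estimate (optional sampling for the martingale $H(M_t)-H(M_0)-\int_0^t\mathcal{A}H(M_s)\dd s$, dyadic blocks, Cauchy--Schwarz, conditional Jensen) is correctly derived; the left-limit caveat you flag in the ``$\ge$'' inclusion is indeed a null-set issue, justifiable by quasi-left continuity of $M$ (available because $\mathcal{D}$ is dense, hence separating, cf.\ \cite[Chap.~4, Thm.~3.12]{Ethier1986a}, which the paper also invokes). But the proof is not complete, and the hole is exactly where you say it is: you establish $\E\bigl[\int_0^\delta H(M_{\rho+t})^{-1}\dd t\,\big|\,\mathcal{F}_\rho\bigr]=\infty$ and never upgrade this to the almost-sure divergence that Definition~\ref{def:regular} requires. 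A nonnegative random variable with infinite conditional mean can perfectly well be a.s.\ finite, so nothing follows; and neither of your proposed repairs is carried out or likely to go through as stated --- a comparison process needs Markov structure that a single martingale-problem solution (with no well-posedness assumed) does not provide, and a conditional Borel--Cantelli argument would require lower bounds on conditional \emph{probabilities} of the block events along a filtration adapted to the block structure, which the first-moment bound from the martingale property does not give. Since this is the entire substance of the proposition, the gap is genuine.

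The idea you are missing is to invert the integral rather than attack it directly. On $\{\rho<\infty\}$ define $\tau(t):=\inf\{s\ge 0:\int_0^s H(M_{u+\rho})^{-1}\dd u\ge t\}$. Then ``$\int_0^s H(M_{u+\rho})^{-1}\dd u=\infty$ for every $s>0$, a.s.'' is equivalent to ``$\tau(t)=0$ for all $t\ge 0$, a.s.'', and since $\tau(t)\ge 0$ the latter is an \emph{expectation-zero} statement --- exactly the kind of statement that first-moment bounds can reach. This is how the paper proceeds: by Lemma~\ref{lem:timechange} (applied to the shifted process with $\tilde{\sigma}\equiv 1$ and $\sigma=H$), $\tau$ is finite and solves $\tau(t)=\int_0^t H(M_{\tau(u)+\rho})\dd u$; your optional-sampling estimate, applied at the stopping times $\tau(t)+\rho$ instead of the deterministic shifts $\rho+t$, gives $\E\bigl[H(M_{\tau(t)+\rho})\mathbbm{1}_{\{\rho<\infty\}}\bigr]\le \|\mathcal{A}H\|\,\E\bigl[\tau(t)\mathbbm{1}_{\{\rho<\infty\}}\bigr]$; plugging this into the equation for $\tau$ via Tonelli yields $\E\bigl[\tau(t)\mathbbm{1}_{\{\rho<\infty\}}\bigr]\le \|\mathcal{A}H\|\int_0^t\E\bigl[\tau(u)\mathbbm{1}_{\{\rho<\infty\}}\bigr]\dd u$, and Gronwall (applicable since $\tau(t)\le\|H\|\,t<\infty$) forces $\E\bigl[\tau(t)\mathbbm{1}_{\{\rho<\infty\}}\bigr]=0$. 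So your martingale bound is the right ingredient, but it must be evaluated along the time-change, where the self-referential equation for $\tau$ turns it into a closed Gronwall loop; that single inversion is what dissolves the conditional-mean-versus-a.s.\ obstruction you ran into.
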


\begin{proof}
  Define $\rho$ as in \eqref{eq:rho} and recall that, by Definition~\ref{def:regular}, \eqref{eq:regularInLemma1} and \eqref{eq:regularInLemma2} have to be verified. Set 
  \[
    \rho_0 := \inf \left\lbrace s \in [0,\infty)\,:\, H(M_s) = 0 \right\rbrace. 
  \]
  Since $H$ is continuous and $M$ is RCLL, $H(M_{\rho})=0$ on $\{\rho < \infty\}$ and $\rho_0 \leq \rho$, $\P$-a.s. In particular, $\rho_0 = \rho$ on $\{\rho_0 = \infty\}$, and if $\{\rho_0 < \infty\}$ is a $\P$-null set, this already establishes the claim. Otherwise the probability measure $\tilde{\P}(\,\cdot\,):=\P(\,\cdot\, |\{\rho_0 < \infty\})$ is well-defined, $\rho_0 < \infty$, $\tilde{\P}$-a.s. and to prove the proposition we only need to show $\tilde{\P}(\rho_0 \geq \rho)=1$. To do so, on $\{\rho_0 < \infty\}$ define for any $t \geq 0$ the random time
  \[ 
    \tau(t):= \inf \left\lbrace s \in [0,\infty)\,:\,\int_0^s H(M_{u+\rho_0})^{-1} \dd u \geq t \right\rbrace .
  \] 
  Since $H(M_{\rho_0})=0$ and $\rho_0 \leq \rho$, $\tilde{\P}$-a.s., it suffices to establish that $\tilde{\P}$-a.s. for any $t \geq 0$, $\tau(t)=0$.

  \smallskip
  For the proof of the last statement one proceeds as follows: Since $H$ is bounded, $H(M_\rho)=0$ on $\{ \rho < \infty \}$, $\tilde{\P}$-a.s., and by footnote~\ref{fn:regular}, Lemma~\ref{lem:timechange} can be applied to the RCLL process $(M_{u+\rho_0})_{u \geq 0}$ on $(\Omega,\mathcal{F},\tilde{\P})$ with $\tilde{\sigma}=1$ and $\sigma=H$. This yields $\tilde{\P}$-a.s.,
  \begin{equation}\label{eq:auxEq61} 
    \tau(t)=\int_0^t H(M_{\tau(u)+\rho_0})\dd u, \quad t \geq 0,
  \end{equation}
  and $\tau(t) < \infty$ for any $t \geq 0$. Denote by $(\mathcal{F}_t)_{t \geq 0}$ the $\P$-usual augmentation of the filtration generated by $M$. Then $\rho$ and $\rho_0$ (possibly modified on a $\P$-null set, see \cite[Chap.~4, Cor.~3.13]{Ethier1986a}) are $(\mathcal{F}_t)_{t \geq 0}$-stopping times and thus
  \[
    \lbrace \tau(t)+\rho_0 \leq s \rbrace =  \lbrace \rho_0 \leq s \rbrace \cap \left( \left\lbrace \int_{\rho_0}^{s} H(M_{u})^{-1} \dd u  \geq t \right\rbrace \cup \{\rho \leq s - \rho_0 \} \right) \in \mathcal{F}_s
  \]
  shows that also $\tau(t)+\rho_0$ is a stopping time. By assumption on $H$, $M$ and $\mathcal{A}$ the process
  \begin{equation*}
    N_t := H(M_t) - H(M_0) - \int_0^t \mathcal{A} H (M_s) \dd s, \quad t \geq 0,
  \end{equation*}
  is an $(\mathcal{F}^M_t)_{t\geq 0}$-martingale and thus, by \cite[Lem.~II.67.10]{Rogers2000}, also an $(\mathcal{F}_t)_{t \geq 0}$-martingale. By the optional sampling theorem, for any $r \geq 0$, $\P$-a.s.,
  \[
    \E[N_{(\tau(t)+\rho_0)\wedge r} | \mathcal{F}_{\rho_0 \wedge r}] = N_{\rho_0 \wedge r} 
  \]
  or equivalently
  \[
    \E[H(M_{(\tau(t)+\rho_0)\wedge r})| \mathcal{F}_{\rho_0 \wedge r}] = H(M_{\rho_0 \wedge r}) + \E\left[\left.\int_{\rho_0 \wedge r}^{(\tau(t)+\rho_0)\wedge r} \mathcal{A} H(M_u) \dd u\right| \mathcal{F}_{\rho_0 \wedge r}\right]. 
  \]
  Multiplying by $\mathbbm{1}_{\lbrace \rho_0 \leq r \rbrace}$, using $\{\rho_0 \leq r\} \in \mathcal{F}_{\rho_0 \wedge r}$ and taking expectations gives 
  \[
    \E[H(M_{(\tau(t)+\rho_0)\wedge r})\mathbbm{1}_{\lbrace \rho_0 \leq r \rbrace}] = \E[H(M_{\rho_0 \wedge r})\mathbbm{1}_{\lbrace \rho_0 \leq r \rbrace}] + \E\left[\int_{\rho_0 \wedge r}^{(\tau(t)+\rho_0)\wedge r} \mathcal{A} H(M_u) \dd u \mathbbm{1}_{\lbrace \rho_0 \leq r \rbrace}\right].
  \]
  By assumption $\mathcal{D} \subset C_0(E)$ is dense and thus separating (see \cite[Chap.~3, Ex.~11]{Ethier1986a}) in the terminology of \cite[Chap.~3, Sec.~4]{Ethier1986a}. Therefore, by quasi-left continuity, \cite[Chap.~4, Thm.~3.12]{Ethier1986a}, 
  \[
    \lim_{r \to \infty} M_{(\tau(t)+\rho_0)\wedge r}\mathbbm{1}_{\lbrace \rho_0 \leq r \rbrace} = M_{\tau(t)+\rho_0}\mathbbm{1}_{\lbrace \rho_0 < \infty\rbrace} ,\quad \P\text{-a.s.},  
  \]
  and so using dominated convergence, boundedness and non-negativity of $H$,  $H(M_{\rho_0})=0$ on $\{\rho < \infty\}$ and setting $C:=\|\mathcal{A} H\|$, one estimates
  \begin{equation}\label{eq:Hestimate}\begin{aligned}
    \E[H(M_{\tau(t)+\rho_0})\mathbbm{1}_{\lbrace \rho_0 < \infty \rbrace}] & = \lim_{r \to \infty} \E[H(M_{(\tau(t)+\rho_0)\wedge r})\mathbbm{1}_{\lbrace \rho_0 \leq r \rbrace}] \\ 
    & = \lim_{r \to \infty} \left| \E\left[\int_{\rho_0}^{(\tau(t)+\rho_0)\wedge r} \mathcal{A} H(M_u) \dd u \mathbbm{1}_{\lbrace \rho_0 \leq r \rbrace}\right] \right| \\ &\leq C \E[\tau(t)\mathbbm{1}_{\lbrace \rho_0 < \infty \rbrace}].\end{aligned}
  \end{equation}
  Using \eqref{eq:auxEq61} and Tonelli's theorem for the first and \eqref{eq:Hestimate} for the second equality yields
  \begin{equation}\label{eq:auxEq60} 
    \E[\tau(t)\mathbbm{1}_{\lbrace \rho_0 < \infty \rbrace}]=\int_0^t \E[H(M_{\tau(u)+\rho_0})\mathbbm{1}_{\lbrace \rho_0 < \infty \rbrace}] \dd u \leq C \int_0^t \E[\tau(u)\mathbbm{1}_{\lbrace \rho_0 < \infty \rbrace}] \dd u 
  \end{equation}
  and so Gronwall's lemma implies that the left-hand side of \eqref{eq:auxEq60} is $0$ for any $t \geq 0$. But this implies that $\tilde{\P}$-a.s., $\tau(t) = 0$ for all $t \geq 0$ as desired.
\end{proof}

\subsection{Pathwise Uniqueness}

To verify that the random times $(\tau(t))_{t\in[0,\thor]}$ solving the differential equation~\eqref{eq:ode time-change} are indeed stopping times with respect to the filtration generated by the process $M$, we show pathwise uniqueness of the time-changed Markov process $X_t:=M_{\tau(t)}$ for $t\in [0,\thor]$.

\begin{lemma}\label{lem:uniquenessOfTimeChange}
  Let $\sigma$ and $M$ be given as in Lemma~\ref{lem:timechange}. $(\tau(t))_{t \in [0,\thor]}$ is the family of random times from Lemma~\ref{lem:timechange} with $\tau(t): = \tau(\thor)$ for $t > \thor$ and the time-changed process $X$ is given by $X _{t} := M_{\tau(t)}$ for $t \geq 0$. Suppose $M$ is $(\mathcal{F}_t)$-adapted. Then the following holds:
  \begin{itemize}
    \item[(i)]   The time-changed process $X$ has RCLL sample paths, $\P$-a.s.
    \item[(ii)]  Any RCLL process $\tilde X$ satisfying 
                 \begin{equation}\label{eq:timeChangeGeneral}
                   \tilde X_t = M_{\int_0^t \sigma(u,\tilde X_u) \dd u},\quad t\in[0,\infty),\, \P\text{-}a.s., 
                 \end{equation} 
                 is indistinguishable from $X$.
    \item[(iii)] The random times $(\tau(t))_{t \in [0,\thor]}$ are $(\mathcal{F}_t)$-stopping times.
  \end{itemize}
\end{lemma}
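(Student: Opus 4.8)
The three claims are of increasing subtlety, and I expect part~(ii), the pathwise uniqueness, to be the real work; parts~(i) and~(iii) should then follow quickly. For~(i) I would simply use that $\tau$ is, by Lemma~\ref{lem:timechange}~(i), non-decreasing and continuous on $[0,\thor]$ (and constant afterwards), while $M$ is RCLL. The composition of an RCLL path with a continuous non-decreasing reparametrisation is again RCLL: as $t\downarrow t_0$ we have $\tau(t)\downarrow\tau(t_0)$, so right-continuity of $M$ gives $X_t=M_{\tau(t)}\to M_{\tau(t_0)}=X_{t_0}$; and as $t\uparrow t_0$ either $\tau$ is eventually constant, so $X$ is eventually constant, or $\tau(t)\uparrow\tau(t_0)$ strictly, in which case $M_{\tau(t)}$ converges to the left limit $M_{\tau(t_0)-}$, which exists since $M$ has left limits. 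Hence $X$ is RCLL, $\P$-a.s.

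For~(ii), given any RCLL $\tilde X$ solving~\eqref{eq:timeChangeGeneral}, I would set $\tilde\tau(t):=\int_0^t\sigma(u,\tilde X_u)\dd u$, so that $\tilde X_t=M_{\tilde\tau(t)}$ and $\tilde\tau$ is absolutely continuous, non-decreasing, and itself solves the Carath\'eodory equation~\eqref{eq:ode time-change}. Thus~(ii) reduces to uniqueness of solutions of~\eqref{eq:ode time-change}, the whole difficulty being that $\sigma$ may vanish. This is exactly where the regularity property~\eqref{eq:regularInLemma2} enters (indeed it is used nowhere else, cf.\ footnote~\ref{fn:regular}): it guarantees $H(M_s)>0$ for every $s<\rho$, with $\rho$ as in~\eqref{eq:rho}. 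Consequently, on $\{t:\tilde\tau(t)<\rho\}$ the map $\tilde\tau$ is strictly increasing, since constancy of $\tilde\tau$ on an interval would force $H(M_{\tilde\tau})\tilde\sigma=0$ and hence a zero of $H(M_\cdot)$ strictly before $\rho$, contradicting~\eqref{eq:regularInLemma2}. Being strictly increasing and absolutely continuous with a.e.-positive derivative there, $\tilde\tau$ is invertible below $\rho$ with absolutely continuous inverse (as in Lemma~\ref{lem:timechange}~(i)), and the chain-rule computation of Lemma~\ref{lem:timechange}~(iii) shows that this inverse solves the inverse equation~\eqref{eq:Delta}; by the uniqueness statement in Lemma~\ref{lem:timeinverse} it must coincide with $\mathcal{T}$, whence $\tilde\tau=\tau$ as long as the $M$-clock stays below $\rho$.

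It then remains to see that no finite forward time can push the $M$-clock up to $\rho$. Here I would use that $\int_0^\rho H(M_u)^{-1}\dd u=\infty$ by the very definition~\eqref{eq:rho} of $\rho$, together with the upper bound $\tilde\sigma\le C_3$ from Assumption~\ref{ass:sigma} on the compact image of $M$ restricted to $[0,\rho]$ when $\rho<\infty$ (the case $\rho=\infty$ being immediate). Since $\mathcal{T}(r)\in[0,\thor]$ is a forward time, this gives $\mathcal{T}(s)=\int_0^s H(M_u)^{-1}\tilde\sigma(\mathcal{T}(u),M_u)^{-1}\dd u\ge C_3^{-1}\int_0^s H(M_u)^{-1}\dd u\to\infty$ as $s\uparrow\rho$, so $\mathcal{T}$ maps $[0,\rho)$ onto $[0,\infty)$ and therefore $\tau(t)=\mathcal{T}^{-1}(t)<\rho$ for every finite $t$, and likewise $\tilde\tau(t)<\rho$. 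Combined with the previous paragraph this yields $\tilde\tau\equiv\tau$, hence $\tilde X_t=M_{\tilde\tau(t)}=M_{\tau(t)}=X_t$ for all $t$, $\P$-a.s.; as both are RCLL they are indistinguishable. The delicate point—the main obstacle—is precisely this interplay of the two features of $\rho$: property~\eqref{eq:regularInLemma2} keeps $1/H$ finite below $\rho$ (so that the inverse ODE is well posed and has a unique solution), while the blow-up of $\int_0^\cdot H(M_u)^{-1}\dd u$ at $\rho$ confines every solution below $\rho$ in finite forward time.

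Finally, for~(iii) I would exploit that the inverse clock $\mathcal{T}$ is built pathwise from $(M_r)_{r\le s}$: as the unique solution of~\eqref{eq:Delta} (Lemma~\ref{lem:timeinverse}), obtainable as a limit of Picard iterates each of which is $\mathcal{F}_s$-measurable, $\mathcal{T}(s)$ is $(\mathcal{F}_s)$-adapted, continuous and non-decreasing, while $\rho$ is the explosion time of the adapted additive functional $s\mapsto\int_0^sH(M_u)^{-1}\dd u$ and hence an $(\mathcal{F}_s)$-stopping time. Then $\tau(t)=\inf\{s\in[0,\rho):\mathcal{T}(s)\ge t\}\wedge\rho$ is a first-passage time of a continuous adapted increasing process, so $\{\tau(t)<s\}=\bigcup_{r\in\mathbb{Q}\cap[0,s)}\{\mathcal{T}(r)\ge t\}\in\mathcal{F}_s$, and right-continuity of $(\mathcal{F}_t)$ upgrades this to $\{\tau(t)\le s\}\in\mathcal{F}_s$; the uniqueness established in~(ii) guarantees that the pathwise object $\tau$ so obtained is indeed the time-change appearing in the statement.
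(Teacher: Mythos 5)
Your parts~(i) and (iii) are essentially fine (for (iii) you argue directly via first-passage of the adapted process $\mathcal{T}$, where the paper instead invokes \cite[Chap.~6, Thm.~2.2~(b)]{Ethier1986a} applied to $(t,M_t)$ --- either route is acceptable). The genuine gap is in part~(ii), at the step where you claim that ``no finite forward time can push the $M$-clock up to $\rho$''. You assert that $\int_0^\rho H(M_u)^{-1}\dd u=\infty$ ``by the very definition~\eqref{eq:rho} of $\rho$''. This is false: the definition~\eqref{eq:rho} only guarantees $\int_0^s H(M_u)^{-1}\dd u=\infty$ for $s>\rho$, while the integral up to $\rho$ itself may well be finite --- the jump to $+\infty$ can be caused by non-integrability of $1/H(M_\cdot)$ immediately to the \emph{right} of $\rho$, not by blow-up at $\rho$. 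This is not a pathological corner case but the central scenario the construction is designed for: take $M$ a Brownian motion started at $1$, $\tilde\sigma\equiv 1$, $H(x)=|x|\wedge 1$ (which is regular for $P_x$ by Example~\ref{ex:regularAt0isregular}); then $X_t=M_{\tau(t)}$ solves $\d X_t=\sqrt{H(X_t)}\dd M_t$ and hits $0$ at a finite time $\chi$, at which point $\tau(\chi)=\rho$ and $\int_0^\rho H(M_u)^{-1}\dd u=\chi<\infty$. So $\mathcal{T}$ does \emph{not} map $[0,\rho)$ onto $[0,\infty)$, the conclusion ``$\tau(t)<\rho$ and $\tilde\tau(t)<\rho$ for every finite $t$'' fails, and your argument proves $\tilde\tau=\tau$ only up to the (possibly finite) time at which the clocks reach $\rho$, saying nothing afterwards.

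What is missing is precisely the second half of the paper's proof of~(ii): one must handle the absorption regime. The paper first shows $\tilde\tau(t)\le\rho$ for \emph{all} $t$, via the regularization bound
\begin{equation*}
  Ct \;\geq\; \lim_{\varepsilon\to 0}\int_0^t \frac{\sigma(s,\tilde X_s)}{H(\tilde X_s)\vee\varepsilon}\dd s
  \;=\;\int_0^{\tilde\tau(t)}\frac{1}{H(M_s)}\dd s
\end{equation*}
(change of variables plus monotone convergence), which prevents $\tilde\tau$ from overshooting $\rho$. Then it splits into cases: if $\tilde\rho<\thor$ (where $\tilde\rho$ is the first zero of $H(\tilde X_\cdot)$ capped at $\thor$), right-continuity of $\tilde X$, equation~\eqref{eq:timeChangeGeneral} and the regularity properties \eqref{eq:regularInLemma1}--\eqref{eq:regularInLemma2} give $\rho\le\tilde\tau(\tilde\rho)$, hence $\tilde\tau(t)=\rho=\tau(t)$ for all $t\ge\tilde\rho$; if $\tilde\rho\ge\thor$, both clocks are constant after $\thor$ because $\sigma(t,\cdot)=0$ for $t>\thor$. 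Relatedly, a smaller inaccuracy in your write-up: your claim that $\tilde\tau$ is strictly increasing wherever $\tilde\tau<\rho$ cannot hold past $\thor$ (where $\tilde\sigma$ vanishes identically), so even the matching-below-$\rho$ step must be restricted to $[0,\thor]$ and the tail handled by the same case distinction.
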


\begin{proof}
  \textit{(i)} Since $M$ has RCLL sample paths and $\tau$ is non-decreasing and absolutely continuous by Lemma~\ref{lem:timechange}, the time-changed process $X_{}$ has RCLL sample paths.\smallskip 
  
  \textit{(ii)} Let $\tilde X$ be an RCLL process satisfying equation~\eqref{eq:timeChangeGeneral}. Define the random time
  \begin{equation*}
    \tilde{\rho} := \thor \wedge \inf\big\{ t \geq 0 \,:\, H(\tilde X_t) = 0  \big\} 
  \end{equation*}
  and set
  \begin{equation*}
    \tilde{\tau}(s) := \int_0^s \sigma(u,\tilde X_u ) \dd u, \quad s\in [0,\infty). 
  \end{equation*}
  Notice that the integral is well-defined since $\sigma$ is bounded on compacts and $\tilde X$ is RCLL. Since $X_t=M_{\tau(t)}$ and $\tilde X_t= M_{\tilde{\tau}(t)}$, to verify that $X$ and $\tilde{X}$ are indistinguishable, it is sufficient to show that $\tau (t)= \tilde \tau(t)$ for every $t\in [0,\infty)$, $\P$-a.s. 
  
  By \cite[Lem.~3.31]{Leoni2009} $\tilde{\tau}$ is absolutely continuous with weak derivative $\tilde{\tau}'(u) = \sigma(u, \tilde X_u )$ for $u \in [0,\infty)$ and invertible on $[0,\tilde{\rho}\wedge t_0]$ by the definition of $\tilde{\rho}$. The inverse of $\tilde\tau$ is denoted by $\tilde{\mathcal{T}}$ with domain $[0,\tilde{\tau}(\tilde{\rho}\wedge t_0)]$. Because $\tilde{\mathcal{T}}$ is also strictly increasing and absolutely continuous, the chain rule (see \cite[Thm.~3.44]{Leoni2009}) gives
  \begin{equation*}
    1 = \frac{\dd}{\dd t} \tilde \tau (\tilde{\mathcal{T}}(t))= \sigma(\tilde{\mathcal{T}}(t), \tilde X_{\tilde{\mathcal{T}}(t)})\frac{\dd}{\dd t}\tilde{\mathcal{T}}(t)\quad \text{for almost all } t \in [0,\tilde{\tau}(\tilde{\rho}\wedge t_0)]. 
  \end{equation*}
  Combining this with fundamental theorem of calculus (see \cite[Thm. 3.30]{Leoni2009}), one has that $\tilde{\mathcal{T}}$ satisfies the integral equation
  \begin{equation*} 
    \tilde{\mathcal{T}}(t) = \int_0^t \sigma(\tilde{\mathcal{T}}(s), \tilde X_{\tilde{\mathcal{T}}(s)})^{-1} \dd s,\quad t \in [0,\tilde{\tau}(\tilde{\rho}\wedge t_0)].
  \end{equation*}
  Moreover, notice that $M_t= M_{\tilde \tau(\tilde{\mathcal{T}}(t))}=\tilde X_{\tilde{\mathcal{T}}(t)}$ for $t \in [0,\tilde{\tau}(\tilde{\rho}\wedge t_0)]$. Therefore, $\mathcal{T}(t) = \tilde{\mathcal{T}}(t)$ for $t \in [0,\tilde{\tau}(\tilde{\rho}\wedge t_0)\wedge \tau(t_0)]$ since the solution to this equation is unique on $[0,\tau(t_0)]$, see \eqref{eq:Delta}. Furthermore, we have $\tilde{\tau}(\tilde{\rho}\wedge t_0) \leq \rho \wedge \tilde \tau (t_0)$ since $t < \tilde{\tau}(\tilde\rho\wedge t_0)$ implies $\tilde{\mathcal{T}}(t) < \tilde{\rho}\wedge t_0$ and thus $t < \rho\wedge \tilde \tau (t_0)$, where we recall $\rho$ from \eqref{eq:rho} and that \eqref{eq:regularInLemma1}, \eqref{eq:regularInLemma2} holds. In conclusion, $\mathcal{T}(t) = \tilde{\mathcal{T}}(t)$ for $t \in [0,\tilde{\tau}(\tilde{\rho}\wedge t_0)]$, which leads to $\tilde{\tau}(s) = \tau(s)$ for $ s \in [0,\tilde{\rho}\wedge t_0]$.
  
  To see $\tilde{\tau}(s) = \tau(s)$ for $s>\tilde{\rho}\wedge t_0$, we first observe that $1/(H(M_s)\vee \varepsilon )$ is bounded for every $\varepsilon >0$ and $\tilde{\sigma}$ is bounded on compacts by Assumption~\ref{ass:sigma}. Applying a change of variables (\cite[Cor.~3.57]{Leoni2009}) and using monotone convergence gives 
  \begin{equation*}
    C t \geq \lim_{\varepsilon \to 0} \int_0^t \frac{\sigma(s,\tilde X_s)}{H(\tilde X_s) \vee \varepsilon} \dd s = \lim_{\varepsilon \to 0} \int_0^{\tilde{\tau}(t)} \frac{1}{H(M_s) \vee \varepsilon} \dd s = \int_0^{\tilde{\tau}(t)} \frac{1}{H(M_s)} \dd s,
  \end{equation*}
  which ensures $\tilde{\tau}(t) \leq \rho $ for all $t \geq 0$. Assuming $\tilde{\rho}< \thor$, there exist $\{t_n\}_{n \in \mathbb{N}} \subset [\tilde{\rho},\thor]$ with $t_n \downarrow \tilde{\rho}$ and $H(\tilde X_{t_n}) = 0$ and so $\rho \leq  \tilde{\tau}(\tilde{\rho})$ by \eqref{eq:timeChangeGeneral} and \eqref{eq:regularInLemma1}, \eqref{eq:regularInLemma2}. In this case $\tilde{\tau}(t) = \tilde{\tau}(\tilde{\rho}) = \rho =\tau(t) $ for all $t \geq \tilde{\rho}$. Assuming $\tilde{\rho} \geq \thor$, we have $\tilde{\tau}(t) = \tilde{\tau}(\thor)$ for all $t \geq \thor$ due to $\sigma(t,\cdot) = 0$ for $t > \thor$ and in particular $\tilde{\tau}(t) = \tilde{\tau}(\thor)=\tau (t)$ for $t \geq \thor$.\smallskip
  
  \textit{(iii)} In order to apply a result from \cite{Ethier1986a}, we consider the two-dimensional process $Y_t:=(t,M_t)$ and the time-changed process $(t, X_t)$ for $t\in [0,T]$. Hence, \cite[Chap.~6, Thm.~2.2~(b)]{Ethier1986a} implies that $\tau(t)$ is a stopping time with respect to the usual augmentation of the filtration generated by $M$, and thus also an $(\mathcal{F}_t)$-stopping time, where we keep in mind that the first component of $Y$ generates a trivial filtration.  
\end{proof}

\begin{corollary}\label{cor:MeasurabilityofTimeChange} 
  Let $\sigma$, $M$ and $\tilde X$ be given as in Lemma~\ref{lem:uniquenessOfTimeChange} and denote by $P$ the law (on $D_E[0,\infty)$) of $M$ under $\P$. Then the law of $\tilde X$ under $\P$ is uniquely determined by $P$ and $\sigma$.
\end{corollary}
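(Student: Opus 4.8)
The plan is to deduce the corollary directly from the pathwise uniqueness established in Lemma~\ref{lem:uniquenessOfTimeChange}~(ii), combined with the observation that the time-change is a deterministic measurable functional of the path of $M$ that depends only on $\sigma$. First I would invoke Lemma~\ref{lem:uniquenessOfTimeChange}~(ii): since $\tilde X$ is an RCLL process satisfying the time-change equation~\eqref{eq:timeChangeGeneral}, it is indistinguishable from $X_t := M_{\tau(t)}$, where $(\tau(t))_{t\in[0,\thor]}$ is the family of random times of Lemma~\ref{lem:timechange} (extended by $\tau(t):=\tau(\thor)$ for $t>\thor$). In particular, the law of $\tilde X$ under $\P$ coincides with the law of $X$ under $\P$, so it suffices to show that the latter is determined by $P$ and $\sigma$ alone.

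To this end I would exhibit $X$ as the image of $M$ under a fixed Borel map $\Phi\colon D_E[0,\infty)\to D_E[0,\infty)$ that depends only on $\sigma$. The construction of $\tau$ in the proof of Lemma~\ref{lem:timechange} is entirely pathwise: given a path $\omega$, one forms $\rho(\omega)$ as in~\eqref{eq:rho}, solves the Carath\'eodory equation~\eqref{eq:Delta} for its unique solution $\mathcal{T}_\omega$ (unique by Lemma~\ref{lem:timeinverse}), and takes $\tau_\omega$ to be the capped right inverse of $\mathcal{T}_\omega$ as in~\eqref{eq:stoppingTime}. None of these operations refers to $\P$, so setting $\Phi(\omega)_t:=\omega(\tau_\omega(t))$ gives a map depending only on $\sigma$ with $X=\Phi(M)$, $\P$-a.s. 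Once $\Phi$ is known to be Borel measurable, the law of $X$ equals the pushforward $P\circ\Phi^{-1}$, which manifestly depends only on $P$ and $\sigma$; combined with the previous paragraph this yields the claim.

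The main obstacle is therefore the measurability of $\Phi$. For each fixed $t$, measurability of $\omega\mapsto\tau_\omega(t)$ is already guaranteed by Lemma~\ref{lem:uniquenessOfTimeChange}~(iii), which shows $\tau(t)$ is a stopping time for the filtration generated by $M$ and hence a measurable functional of the path. I would upgrade this to joint measurability of $(\omega,t)\mapsto\tau_\omega(t)$ using the monotonicity and continuity of $t\mapsto\tau_\omega(t)$ from Lemma~\ref{lem:timechange}~(i); for instance $\tau_\omega(t)=\sup\{\tau_\omega(q): q\in\mathbb{Q},\ q\le t\}$ reduces joint measurability to that of countably many coordinates. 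Since $M$ is RCLL the evaluation map $(s,\omega)\mapsto\omega(s)$ is jointly measurable, so $(\omega,t)\mapsto\omega(\tau_\omega(t))$ is measurable; as the resulting path is RCLL (Lemma~\ref{lem:uniquenessOfTimeChange}~(i)) and the Borel $\sigma$-algebra of the Skorokhod space is generated by the coordinate evaluations, $\Phi$ is Borel measurable into $D_E[0,\infty)$. The genuinely delicate point in making this rigorous is confirming that the unique solution $\mathcal{T}_\omega$ of~\eqref{eq:Delta} depends measurably on $\omega$; this can be handled via the standard fact that unique solutions of Carath\'eodory equations arise as measurable limits of Picard iterates, and it is the step requiring the most care.
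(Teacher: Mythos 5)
Your proposal is correct, and its first step (reducing to $X_t=M_{\tau(t)}$ via Lemma~\ref{lem:uniquenessOfTimeChange}~(ii)) is exactly the paper's; the second step, however, takes a genuinely different route. The paper never constructs a measurable solution map: it takes the jointly Borel map $\gamma(m,x):=m_{\int_0^{\cdot}\sigma(u,x_u)\dd u}$ on $D_E[0,\infty)\times D_E[0,\infty)$, forms the Borel set $C$ of pairs with $\gamma(m,x)=x$ and $x_{t_i}\in B_i$, and applies the projection theorem \cite[Appendix~11, Thm.~11.3]{Ethier1986a}: pathwise uniqueness makes the section of $C$ over $P$-a.e.\ $m$ at most a single path, so $\P(X_{t_1}\in B_1,\ldots,X_{t_n}\in B_n)=P(\pi_1 C)$, with $\pi_1 C$ lying in the $P$-completion of the Borel $\sigma$-algebra. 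You instead realize the law as a pushforward $P\circ\Phi^{-1}$ of a pathwise solution map; this is more constructive, but it makes measurability of $\Phi$ the crux --- precisely the work the projection theorem spares the paper. Two points of care if you carry your route out. First, Lemma~\ref{lem:uniquenessOfTimeChange}~(iii) helps only after you transfer the whole setup to the canonical space (coordinate process on $(D_E[0,\infty),P)$, which satisfies the hypotheses of Lemma~\ref{lem:timechange} because those hypotheses involve only $\sigma$ and the law $P$), and even then it yields that $\omega\mapsto\tau_\omega(t)$ is a stopping time of the \emph{usual ($P$-augmented)} canonical filtration, hence measurable with respect to the $P$-completion of the Borel $\sigma$-algebra, not Borel measurable; this suffices for your argument (the completed pushforward is still determined by $P$ and $\sigma$, and the exceptional set $\{\tau_\omega(\thor)=\infty\}$ is $P$-null by Lemma~\ref{lem:timechange}~(ii)), but you should not claim Borel measurability of $\Phi$ on that basis. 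Second, if you insist on genuine Borel measurability via Picard iteration, you must also track measurability of $\rho$ from \eqref{eq:rho} and of the capping and inversion in \eqref{eq:stoppingTime} --- routine but not automatic. In short: both proofs trade on pathwise uniqueness; the paper pays with a descriptive-set-theoretic projection theorem and avoids any analysis of the solution operator, while you pay with a (fixable) measurability analysis and obtain in return an explicit representation of the law as $P\circ\Phi^{-1}$.
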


\begin{proof} 
  By Lemma~\ref{lem:uniquenessOfTimeChange}~(ii), the law of~$\tilde X$ is identical to  the law of~$X$ under~$\P$. To show explicitly that the latter is uniquely determined by $P$ and $\sigma$, one proceeds as follows: Let $n \in \mathbb{N}$, $t_1,\ldots,t_n \in [0,\infty)$, $B_1,\ldots,B_n \in \mathcal{B}(E)$ and let $\pi_1\colon D_E[0,\infty)\times D_E[0,\infty) \to D_E[0,\infty)$ be the projection map on the first component. We have seen in the proof of Lemma~\ref{lem:uniquenessOfTimeChange} that there exist a unique solution to the time-change equation for $\P$-a.e. sample path $M(\omega)$. Hence, as in the proof of \cite[Chap.~6, Lem.~2.1]{Ethier1986a}, the map 
  \begin{equation*}
    \gamma \colon D_E[0,\infty)\times D_E[0,\infty) \to D_E[0,\infty), \quad \gamma(M,X):= M_{\int_0^{\cdot} \sigma (u,X_u)\dd u},
  \end{equation*}
  is Borel measurable and the set 
  \[
    C := \{ (m,x) \in D_E[0,\infty) \times D_E[0,\infty) \,:\, \gamma(m,x)=x, x_{t_1} \in B_1, \ldots, x_{t_n} \in B_n\} 
  \]
  is in $\mathcal{B}(D_E[0,\infty)^2)$. Then \cite[Appendix~11, Thm.~11.3]{Ethier1986a} implies that $\pi_1 C$ is in the $P$-completion of $\mathcal{B}(D_E[0,\infty))$ and thus
  \[
    \P(X_{t_1}\in B_1,\ldots,X_{t_n} \in B_n) = P(\pi_1 C) 
  \]
   is indeed uniquely determined by $P$ and $\sigma$. 
\end{proof}

\bibliography{quellenSkoro}
\bibliographystyle{amsalpha}

\end{document}